\definecolor{verylight}{gray}{0.97}
\definecolor{light}{gray}{0.9}
\definecolor{medium}{gray}{0.85}
\definecolor{dark}{gray}{0.6}
\def\NZQ{\mathbb}               
\def\KK{{\NZQ K}}
\def\KK{{\NZQ K}}
\def\G{{\mathcal G}}
\def\0b{{\mathbf 0}}
\def\alphab{{\mathbf \alphab}}
\def\c_ib{{\mathbf c_i}}
\newcommand\calP{\mathcal{P}}
\def\reg{{\mathbf reg}}
\def\height{\operatorname{ht}}
\def\depth{\operatorname{depth}}
\def\opn#1#2{\def#1{\operatorname{#2}}} 
\opn\chara{char} \opn\length{\ell} \opn\pd{pd} \opn\rk{rk}
\opn\projdim{proj\,dim} \opn\injdim{inj\,dim} \opn\rank{rank}
\opn\depth{depth} \opn\grade{grade} \opn\height{height}
\opn\embdim{emb\,dim} \opn\codim{codim}
\opn\Tr{Tr} \opn\bigrank{big\,rank}
\opn\superheight{superheight}\opn\lcm{lcm}
\opn\trdeg{tr\,deg}
\opn\reg{reg} \opn\lreg{lreg} \opn\ini{in} \opn\lpd{lpd}
\opn\size{size} \opn\sdepth{sdepth}
\opn\link{link}\opn\fdepth{fdepth}\opn\lex{lex}
\opn\tr{tr}
\opn\type{type}
\opn\gap{gap}
\opn\arithdeg{arith-deg}
\opn\HS{HS}
\opn\GL{GL}
\opn\div{div} \opn\Div{Div} \opn\cl{cl} \opn\Cl{Cl}
\opn\Spec{Spec} \opn\Supp{Supp} \opn\supp{supp} \opn\Sing{Sing}
\opn\Ass{Ass} \opn\Min{Min}\opn\Mon{Mon}
\opn\Ann{Ann} \opn\Rad{Rad} \opn\Soc{Soc}\opn\Deg{Deg}
\opn\Im{Im} \opn\Ker{Ker} \opn\Coker{Coker} \opn\Am{Am}
\opn\Hom{Hom} \opn\Tor{Tor} \opn\Ext{Ext} \opn\End{End}
\opn\Aut{Aut} \opn\id{id}
\opn\nat{nat}
\opn\pff{pf}
\opn\Pf{Pf} \opn\GL{GL} \opn\SL{SL} \opn\mod{mod} \opn\ord{ord}
\opn\Gin{Gin} \opn\Hilb{Hilb}\opn\sort{sort}
\opn\PF{PF}\opn\Ap{Ap}
\opn\mult{mult}
\opn\bight{bight}
\opn\aff{aff}
\opn\relint{relint} \opn\st{st}
\opn\lk{lk} \opn\cn{cn} \opn\core{core} \opn\vol{vol}  \opn\inp{inp} \opn\nilpot{nilpot}
\opn\link{link} \opn\star{star}\opn\lex{lex}\opn\set{set}
\opn\width{wd}
\opn\Fr{F}
\opn\QF{QF}
\opn\G{G}
\opn\type{type}\opn\res{res}
\opn\conv{conv}
\opn\Ind{Ind}
\opn\gr{gr}
\def\pot#1#2{#1[\kern-0.28ex[#2]\kern-0.28ex]}
\opn\dirlim{\underrightarrow{\lim}}
\opn\inivlim{\underleftarrow{\lim}}
\let\to=\rightarrow
\def\Implies{\ifmmode\Longrightarrow \else
	\unskip${}\Longrightarrow{}$\ignorespaces\fi}
\def\implies{\ifmmode\Rightarrow \else
	\unskip${}\Rightarrow{}$\ignorespaces\fi}
\def\iff{\ifmmode\Longleftrightarrow \else
	\unskip${}\Longleftrightarrow{}$\ignorespaces\fi}
\newtheorem{Theorem}{Theorem}[section]
\newtheorem{Lemma}[Theorem]{Lemma}
\newtheorem{Corollary}[Theorem]{Corollary}
\newtheorem{Remark}[Theorem]{Remark}
\newtheorem{Example}[Theorem]{Example}
\newtheorem{Definition}[Theorem]{Definition}
\newtheorem{Notation}[Theorem]{Notation}
\let\epsilon\varepsilon
\let\kappa=\varkappa
\def\qed{\ifhmode\textqed\fi
	\ifmmode\ifinner\quad\qedsymbol\else\dispqed\fi\fi}
\def\textqed{\unskip\nobreak\penalty50
	\hskip2em\hbox{}\nobreak\hfil\qedsymbol
	\parfillskip=0pt \finalhyphendemerits=0}
\def\dispqed{\rlap{\qquad\qedsymbol}}
\opn\dis{dis}
\def\pnt{{\raise0.5mm\hbox{\large\bf.}}}
\opn\Lex{Lex}
\newcommand*{\circled}[1]{\lower.7ex\hbox{\tikz\draw (0pt, 0pt)%
		circle (.5em) node {\makebox[1em][c]{\small #1}};}}
\begin{document}

\title{Powers of edge ideals of edge-weighted trees}

	\author {Jiaxin Li,  Guangjun Zhu$^{\ast}$ and  Shiya Duan}

	
	\address{School of Mathematical Sciences, Soochow University, Suzhou, Jiangsu, 215006, P. R. China}
	
\email{lijiaxinworking@163.com(Jiaxin Li), zhuguangjun@suda.edu.cn\linebreak[4](Corresponding author:Guangjun Zhu),3136566920@qq.com(Shiya Duan).}

	
	\thanks{$^{\ast}$ Corresponding author}

\thanks{2020 {\em Mathematics Subject Classification}.
Primary 13A15,13D02; Secondary 05E40}

\thanks{Keywords:  Regularity, integrally closed,  powers of the edge ideal,  edge-weighted tree}

	

	\maketitle
\begin{abstract}
This paper gives exact formulas for the regularity of edge ideals of  edge-weighted integrally closed trees. In addition, we provide some linear upper bounds  on the regularity of powers of such  ideals.
\end{abstract}
	\setcounter{tocdepth}{1}

    \section{Introduction}
 Let $G$ be a graph with vertex set $V(G)=\{x_1,\ldots,x_n\}$ and   edge set $E(G)$.  We write $xy$ for $\{x,y\}$ if $\{x,y\}\in E(G)$  is an  edge of $G$ with $x$ and $y$ as endpoints.
Suppose $w: E(G)\rightarrow \mathbb{Z}_{>0}$ is an edge weight function on $G$. We write $G_\omega$ for the pair $(G,\omega)$ and call it an {\em edge-weighted} graph with the underlying graph  $G$.
For a weighted graph $G_\omega$, its {\em edge-weighted ideal}  (or simply edge ideal), was introduced in \cite{PS}, is the ideal of the polynomial ring $S=\KK[x_{1},\dots, x_{n}]$ in $n$ variables over a field $\KK$ given by
\[
I(G_\omega)=(x^{\omega(xy)}y^{\omega(xy)}\mid xy\in E(G_\omega)).
\]
If $w$ is the constant function defined by $w(e)=1$ for all $e\in E(G)$, then  $I(G_\omega)$ is the classical  edge ideal of the underlying graph $G$ of $G_\omega$, which
has been  studied extensively in the literature \cite{BBH,BHT,JNS,M,Zhu,Zhu1}.

Recently, there has been some interest in characterizing weights for which the edge ideals of edge-weighted graphs are Cohen-Macaulay. For example,
Paulsen and Sather-Wagstaff in \cite{PS}  classified Cohen-Macaulay edge-weighted graphs $G_\omega$ where the underlying
graph $G$ is a  cycle, a tree, or a  complete graph.  Seyed Fakhari et al. in \cite{FSTY} continued this study by classifying
Cohen-Macaulay edge-weighted graphs $G_\omega$ if $G$ is a very well-covered graph. Recently, Diem et al. in \cite{DMV} gave a  complete  characterization of sequentially Cohen-Macaulay edge-weighted graphs. In \cite{W},  Wei classified all Cohen-Macaulay weighted chordal graphs from a purely graph-theoretic point of view. Hien in \cite{Hi}
classified Cohen-Macaulay edge-weighted graphs $G_\omega$ if $G$ has girth at least $5$.

 Integral closure of monomial ideals is also an interesting topic.
In \cite{DZCL}, we   gave a  complete  characterization of integrally closed edge-weighted graphs $G_\omega$ and  showed that if their underlying
graph $G$ is a  star graph,  a  path, or a cycle, then $G_\omega$ is normal. Later,  in \cite{ZDCL}, we gave some exact formulas for the regularity  of powers of edge ideals of  edge-weighted star graphs and integrally closed paths.

The study of edge ideals of edge-weighted graphs is much more recent and consequently there are  fewer results in this direction.
In this paper, we decide to focus on the regularity  of powers of the edge ideals  of   integrally closed edge-weighted trees. Recall that the regularity is an important invariant associated to a homogeneous ideal  $I$.
It is well-known that   $\reg(I^t)$, as a function in $t$, is asymptotically linear for $t\gg 0$  (cf.~\cite{CHT,K}). In general, it is very difficult to decide when this function starts to be linear.
To find the exact form of the linear function is also not easy (cf.~\cite{BBH,EU,H1,ZXWZ}).

This paper is organized as follows. In the next section, we recall several definitions and
terminology that we will need later. In Section \ref{sec:regtree},  by  using the Betti splitting and polarization approaches, we give precise formulas for the  regularity of the edge ideals of  edge-weighted integrally closed trees. In Section \ref{sec:powertree}, by classifying based on  the distance from a vertex  of the tree to  its longest path   containing all non-trivial edges, we give some linear upper bounds on the regularity of powers of  edge ideals of  edge-weighted integrally closed trees.

\section{Preliminaries}
\label{sec:prelim}

In this section, we provide the definitions and basic facts that will be used throughout this paper. For detailed information we refer to \cite{HH}.

\subsection{Notions of simple graphs}
Throughout the paper, all graphs will be finite and simple, i.e., undirected graphs with no loops nor multiple edges. Given an edge-weighted graph $G_\omega$,  we denote its vertex and edge sets by $V(G_\omega)$ and $E(G_\omega)$, respectively. Any concept valid for graphs automatically applies to edge-weighted graphs.
 For example, the \emph{neighborhood} of a vertex $v$ in an edge-weighted graph $G_\omega$ with the underlying graph  $G$ is defined as  $N_G(v)\!:=\{u \in V(G)\mid uv\in E(G)\}$. Given  a subset $W$ of $V(G_\omega)$, its \emph{neighborhood} is defined as  $N_G(W)\!:=\bigcup\limits_{v\in W} N_G(v)$. The \emph{induced subgraph} by $W$ in $G_\omega$ is the graph $G_\omega[W]$ with vertex set $W$, and for any  $u,v\in V(G_\omega[W])$,
 $uv$ is an  edge in $G_\omega[W]$ if and only if $uv$ is an  edge in $G_\omega$, and the weight function $\omega'$ satisfies $\omega'(uv)=\omega(uv)$.
 At the same time, the induced subgraph of $G_\omega$ on the set $V(G_\omega)\setminus W$ is denoted by $G_\omega\setminus W$ or $G\setminus W$ for simplicity. In particular, if $W=\{v\}$, we  write $G\setminus v$ instead of $G\setminus \{v\}$ for simplicity.
For any subset $A$ of $E(G_\omega)$, $G_\omega\setminus A$  is a subgraph of $G_\omega$ obtained by removing all edges in $A$. In particular, if $A=\{e\}$  then we also write $G_\omega\setminus e$ or $G\setminus e$ instead of $G_\omega\setminus \{e\}$.

A \emph{walk} $W$ of length $n$ in a graph $G$ is a sequence of vertices $(w_1,\ldots, w_n,w_{n+1})$,
such that $w_iw_{i+1}\in E(G)$ for $1\le i\le n$. The vertices $w_1$ and $w_{n+1}$ are connected  by $W$ and are called its ends, the vertices $w_2,\ldots,w_n$ are the inner vertices of $W$. The walk $W$ is \emph{closed} if $w_1=w_{n+1}$.
Furthermore, the walk $W$ is called a \emph{cycle} if it is closed and the points $w_1,\ldots,w_n$ are distinct. At the same time, a \emph{path}  denoted by $P$  is a walk where all  points are distinct.
  A \emph{tree} is a connected simple graph without cycles.
For a tree $T$, let $L(T)=\{v \in V(T)|\deg_T(v) = 1\}$ be the set of all leaves of $T$.  The tree $T$ is called to be a \emph{caterpillar} if $T\backslash L(T)$ is either empty or is a simple path.  A longest path in a caterpillar is called  the \emph{spine} of the caterpillar. Note that given
any spine, every edge of a caterpillar is incident on it. With respect to a fixed spine $P$, the the
pendant edges incident on $P$ are called \emph{whiskers}.

A matching in a graph is a subset of edges no two of which share of vertex. A matching is induced if no two vertices belonging to different edges of the matching are adjacent. In other words, an induced matching in a graph $G$ is formed by the edges of a 1-regular induced subgraph of $G$. If $G_\omega$ is an edge-weighted graph, its induced matching refers to the induced matching of its underlying graph  $G$.
The \emph{induced matching number} of $G_\omega$, denoted by $\nu(G_\omega)$ or $\nu(G)$, is the maximum size of an induced matching in $G$.

An edge-weighted  graph is said to be  \emph{non-trivial} if there is at least one edge with  a weight  greater than $1$, otherwise, it is said to be
{\em trivial}. An edge $e \in E(G_\omega)$ is said to be {\em non-trivial} if its weight $w(e) \ge2$. Otherwise, it is said to be {\em trivial}.

\subsection{Notions from commutative algebra}

Let  $S=\KK[x_1,\ldots,x_n]$ be a polynomial ring   over  a field $\KK$. Let  $M$ be a graded $S$-module with minimal free resolution
$$0\rightarrow \bigoplus\limits_{j}S(-j)^{\beta_{p,j}}\rightarrow \bigoplus\limits_{j}S(-j)^{\beta_{p-1,j}}\rightarrow \cdots\rightarrow \bigoplus\limits_{j}S(-j)^{\beta_{0,j}}\rightarrow M\rightarrow 0,$$
where the maps are exact, $p\le n$, and $S(-j)$ is the free module obtained by  shifting the degrees in $S$ by $j$.
The numbers $\beta_{i,j}$'s are positive integers and are called   the $(i, j)$-th graded Betti number of $M$.
An important homological invariant   related to these numbers are  the Castelnuovo-Mumford regularity (or simply regularity), denoted by $\reg(M)$,
	\begin{align*}
\reg(M)&=\mbox{max}\,\{j-i\ |\ \Tor_i(M,\KK)_j\neq 0\}\\
	\end{align*}

The following lemmas are often used  to compute  the  regularity of a module or  ideal.
 \begin{Lemma}{\em (\cite[Lemma 3.1]{HT})}\label{reglemma}
 Let $0 \longrightarrow M \longrightarrow N \longrightarrow P \longrightarrow 0$ be a short exact sequence of finitely generated graded $S$-modules. Then
 \[
 \reg(N) \leq \max \{\reg(M), \reg(P)\}.
 \]
 The equality holds if $\reg(P) \neq \reg(M)-1$.
 \end{Lemma}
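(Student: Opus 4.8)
The plan is to read the statement directly off the Tor-definition of regularity given above, by tensoring the short exact sequence with the residue field $\KK$. Applying $-\otimes_S\KK$ to $0\to M\to N\to P\to 0$ produces a long exact sequence of graded vector spaces
\[
\cdots\to\Tor_{i+1}(P,\KK)_j\to\Tor_i(M,\KK)_j\to\Tor_i(N,\KK)_j\to\Tor_i(P,\KK)_j\to\Tor_{i-1}(M,\KK)_j\to\cdots,
\]
in which every map preserves the internal degree $j$. For the inequality I would argue at a fixed pair $(i,j)$: if $\Tor_i(N,\KK)_j\neq0$, then exactness at the middle spot forces either $\Tor_i(P,\KK)_j\neq0$ (its image is nonzero) or $\Tor_i(M,\KK)_j\neq0$ (it is a nonzero preimage). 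In the first case $j-i\le\reg(P)$, in the second $j-i\le\reg(M)$; either way $j-i\le\max\{\reg(M),\reg(P)\}$, and taking the maximum over all nonvanishing $\Tor_i(N,\KK)_j$ yields $\reg(N)\le\max\{\reg(M),\reg(P)\}$.

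For the equality under the hypothesis $\reg(P)\neq\reg(M)-1$, I would set $a=\reg(M)$, $b=\reg(P)$ and exhibit a nonzero $\Tor_i(N,\KK)_j$ with $j-i=\max\{a,b\}$. If $b\ge a$, I pick $(i,j)$ with $\Tor_i(P,\KK)_j\neq0$ and $j-i=b$; the next term $\Tor_{i-1}(M,\KK)_j$ lives in degree $j-(i-1)=b+1>a=\reg(M)$, hence vanishes, so $\Tor_i(N,\KK)_j\to\Tor_i(P,\KK)_j$ is surjective and $\Tor_i(N,\KK)_j\neq0$, giving $\reg(N)\ge b$. If instead $a>b$, I pick $(i,j)$ with $\Tor_i(M,\KK)_j\neq0$ and $j-i=a$; now the relevant term is $\Tor_{i+1}(P,\KK)_j$, sitting in degree $j-(i+1)=a-1$. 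Precisely because $b<a$ and $b\neq a-1$, we have $a-1>b=\reg(P)$, so $\Tor_{i+1}(P,\KK)_j=0$, the map $\Tor_i(M,\KK)_j\to\Tor_i(N,\KK)_j$ is injective, and $\Tor_i(N,\KK)_j\neq0$, giving $\reg(N)\ge a$. Combined with the inequality above, equality follows in both cases.

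The only delicate point, and the reason the hypothesis is phrased as it is, is the case $\reg(M)>\reg(P)$: there a top-degree class of $M$ could a priori be annihilated inside $N$ by the boundary map coming from $\Tor_{i+1}(P,\KK)$, and such cancellation is exactly possible when that connecting module lands in degree $a-1$, i.e. when $\reg(P)=\reg(M)-1$. Excluding this is what forces $\Tor_{i+1}(P,\KK)_j=0$ and lets the injection survive. I therefore expect the main obstacle to be the careful degree bookkeeping in this single case rather than any substantial new idea; the symmetric worry when $\reg(P)\ge\reg(M)$ does not occur, since the obstructing term there always sits in a degree strictly above $\reg(M)$ and so vanishes automatically.
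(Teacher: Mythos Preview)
Your argument is correct and is the standard proof via the long exact sequence in $\Tor$; the degree bookkeeping is right, and you have correctly isolated why the hypothesis $\reg(P)\neq\reg(M)-1$ is needed precisely in the case $\reg(M)>\reg(P)$. Note, however, that the paper does not supply its own proof of this lemma: it is quoted from \cite[Lemma 3.1]{HT} and used as a black box, so there is no in-paper argument to compare against.
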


\begin{Lemma}{\em (\cite[Lemma 3.1]{Zhu}) and \cite[Lemma 3.1]{HT}) }\label{sum}
 Let $S_1=k\left[x_1, \ldots, x_m\right]$ and $S_2=k\left[x_{m+1}, \ldots, x_n\right]$ be two polynomial rings, $I \subseteq S_1$ and $J \subseteq S_2$ be two nonzero homogeneous ideals. Then
\begin{itemize}			
\item[(1)] $\reg(I J)=\reg(I)+\reg(J)$,
\item[(2)] $\reg((I+J)^t)=\max _{\substack{i \in [t-1] \\ j \in [t]}}\{\reg(I^{t-i})+\reg(J^i), \reg(I^{t-j+1})+\reg(J^j)-1\}$ for any $t \ge 1$.
\end{itemize}
\end{Lemma}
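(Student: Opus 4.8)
The plan is to exploit the standing hypothesis that $I$ and $J$ use disjoint sets of variables, so that $S=S_1\otimes_k S_2$ and regularity becomes additive on ``tensor-product'' modules.

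For part (1), I would first note that the multiplication map $I\otimes_k J\to S$, $f\otimes g\mapsto fg$, is an injective map of graded $S$-modules with image $IJ$: injectivity is the exactness of $-\otimes_k-$ applied to $I\hookrightarrow S_1$ and $J\hookrightarrow S_2$, and the image is $S$-stable because every $s\in S$ splits as a sum of products from $S_1$ and $S_2$. Thus $IJ\cong I\otimes_k J$. Writing $F_\bullet$ and $G_\bullet$ for the minimal graded free resolutions of $I$ over $S_1$ and of $J$ over $S_2$, the total complex $F_\bullet\otimes_k G_\bullet$ is a graded free resolution of $IJ$ over $S$, and it stays minimal because each entry of its differential lies in $\mathfrak m_1 S$ or $\mathfrak m_2 S$. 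Hence $\beta^S_{i,j}(IJ)=\sum_{p+q=i,\,a+b=j}\beta^{S_1}_{p,a}(I)\,\beta^{S_2}_{q,b}(J)$, and since the extremal pairs for $I$ and for $J$ combine to a nonzero Betti number of $IJ$ in internal-minus-homological degree $\reg(I)+\reg(J)$ while no pair exceeds it, $\reg(IJ)=\reg(I)+\reg(J)$.

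For part (2), I would induct on $t$. The base case $t=1$ is $\reg(I+J)=\reg(I)+\reg(J)-1$, which comes from the Künneth isomorphism $S/(I+J)\cong S_1/I\otimes_k S_2/J$ (the same tensor-of-resolutions argument) together with $\reg(\mathfrak a)=\reg(S/\mathfrak a)+1$. For the step, I would use the decomposition and the disjoint-variable intersection identity
\[
(I+J)^t=I^t+J(I+J)^{t-1},\qquad I^t\cap J(I+J)^{t-1}=I^tJ,
\]
which yield the Mayer--Vietoris short exact sequence
\[
0\longrightarrow I^tJ\longrightarrow I^t\oplus J(I+J)^{t-1}\longrightarrow (I+J)^t\longrightarrow 0 .
\]
Feeding this into the long exact sequence of $\Tor(-,k)$ gives $\reg((I+J)^t)\le\max\{\reg(I^t),\ \reg(J(I+J)^{t-1}),\ \reg(I^tJ)-1\}$, where the $-1$ in the last slot is exactly the homological shift of the connecting map. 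Part (1) evaluates the corner term as $\reg(I^tJ)=\reg(I^t)+\reg(J)$ (a pair of total exponent $t+1$, i.e. the $j=1$ member of the shifted family), and the remaining mixed factor $J(I+J)^{t-1}$ I would unwind by a second such sequence, peeling off one copy of $J$, and by applying the induction hypothesis to $(I+J)^{t-1}$. Tracking the recursion, the unshifted terms $\reg(I^{t-i})+\reg(J^i)$ arise from the diagonal products $I^{t-i}J^i$ of total exponent $t$, while the shifted terms $\reg(I^{t-j+1})+\reg(J^j)-1$ arise from the intersection pieces $I^aJ^b$ with $a+b=t+1$.

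I expect two places to be the real work. The first is justifying the intersection identities and the product decompositions $I^aJ^b\cong I^a\otimes_k J^b$ for arbitrary (not necessarily monomial) homogeneous ideals; this should be done via flatness of $S$ over $S_1$ and $S_2$ and the identity $(US_2)\cap(S_1V)=U\otimes_k V$ for graded subspaces $U\subseteq S_1$, $V\subseteq S_2$, rather than through monomial bookkeeping. The second, and the genuine obstacle, is upgrading the inequalities coming from the exact sequences to the asserted equality: one must show that the extremal Betti numbers predicted by the two families are not cancelled in the long exact sequences. Here I would invoke the equality clause of Lemma \ref{reglemma} (valid once the relevant regularities differ by more than one) to identify at each stage the dominant family and to certify non-cancellation; this delicate bookkeeping is where the formula is really earned.
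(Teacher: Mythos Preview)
The paper does not prove this lemma: it is quoted verbatim from \cite[Lemma 3.1]{Zhu} and \cite[Lemma 3.1]{HT} and used as a black box. So there is no in-paper argument to compare against; the relevant comparison is with the cited sources (and with \cite{NV}, which treats part (2) in full generality).

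Your treatment of part (1) is correct and is exactly the standard argument: the tensor product of minimal free resolutions over $S_1$ and $S_2$ is a minimal free resolution over $S=S_1\otimes_k S_2$, and the Betti-number identity gives $\reg(IJ)=\reg(I)+\reg(J)$ immediately.

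For part (2), your Mayer--Vietoris/induction scheme is the right skeleton and does produce the inequality
\[
\reg\bigl((I+J)^t\bigr)\ \le\ \max_{i,j}\bigl\{\reg(I^{t-i})+\reg(J^i),\ \reg(I^{t-j+1})+\reg(J^j)-1\bigr\}.
\]
The gap is in the proposed upgrade to equality. You appeal to the equality clause of Lemma~\ref{reglemma}, but that clause requires checking $\reg(P)\neq\reg(M)-1$, where $P=(I+J)^t$ is precisely the module whose regularity you are trying to determine; the hypothesis cannot be verified without already knowing the answer, so the argument is circular. Moreover, there is no a priori reason the relevant regularities differ by more than one, so the ``dominant family'' bookkeeping you sketch cannot be carried out in general.

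The proofs in the cited references avoid this circularity by exploiting the bigraded (tensor) structure more directly: one computes $\Tor_i^S\bigl(S/(I+J)^t,k\bigr)$ via an explicit filtration of $S/(I+J)^t$ whose successive quotients are genuine tensor products $S_1/I^a\otimes_k J^b/J^{b+1}$ (or the analogous pieces), so that each graded Betti number of $(I+J)^t$ is expressed as a \emph{sum} of products of Betti numbers of powers of $I$ and of $J$, not merely bounded by a maximum. Non-cancellation is then automatic, and the claimed equality follows. If you want to repair your argument, replace the appeal to Lemma~\ref{reglemma} by this direct Tor computation; see \cite{NV} for the details.
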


Calculating or even estimating the regularity for a general ideal is a challenging problem. Formulas for $\reg(I)$ in special cases will be provided using methods developed in
\cite{FHT} and \cite{F}. For  a monomial ideal $I$,   let $\mathcal{G}(I)$ denote its unique minimal set of monomial generators.

\begin{Definition}{\em (\cite[Definition 1.1]{FHT})}\label{bettispliting}
		Let $I$ be a monomial ideal. If there exist monomial ideals $J$ and $K$ such that $\mathcal{G}(I)=\mathcal{G}(J)\cup\mathcal{G}(K)$ and $\mathcal{G}(J)\cap\mathcal{G}(K)=\emptyset$. Then $I=J+K$ is a Betti splitting if
		$$
		\beta_{i, j}(I)=\beta_{i, j}(J)+\beta_{i, j}(K)+\beta_{i-1, j}(J \cap K) \text { for all } i, j \geq 0,
		$$
		where $\beta_{i-1, j}(J \cap K)=0$ for $i=0$.
\end{Definition}

\begin{Lemma}{\em (\cite[Corollary 2.7]{FHT})}\label{spliting}
Suppose  $I=J+K$, where $\mathcal{G}(J)$ contains all the generators of $I$ that are divisible by some variable $x_i$, and $\mathcal{G}(K)$ is a nonempty set containing the remaining generators of $I$. If $J$ has a linear resolution, then $I=J+K$ is a Betti splitting.

Definition \ref{bettispliting} states that $\reg(I)=\max\{\reg(J), \reg(K), \reg(J \cap K)-1\}$,  as a result of Betti splitting.
\end{Lemma}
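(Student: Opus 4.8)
The plan is to handle the statement as two separate results. The first is the Betti-splitting criterion, which is precisely \cite[Corollary 2.7]{FHT}; the second is the regularity formula $\reg(I)=\max\{\reg(J),\reg(K),\reg(J\cap K)-1\}$, which I would derive formally from the defining identity of a Betti splitting recorded in Definition \ref{bettispliting}. The tool underlying both is the Mayer--Vietoris short exact sequence
\[
0 \longrightarrow J\cap K \longrightarrow J\oplus K \longrightarrow I \longrightarrow 0
\]
attached to the decomposition $I=J+K$, whose maps are $u\mapsto(u,u)$ and $(a,b)\mapsto a-b$.

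For the splitting assertion I would apply $\Tor(-,\KK)$ to this sequence and inspect, in each homological degree $i$ and internal degree $j$, the four consecutive terms
\[
\Tor_i(J\cap K)_j \longrightarrow \Tor_i(J)_j\oplus\Tor_i(K)_j \longrightarrow \Tor_i(I)_j \longrightarrow \Tor_{i-1}(J\cap K)_j.
\]
A short diagram chase shows that the additive identity $\beta_{i,j}(I)=\beta_{i,j}(J)+\beta_{i,j}(K)+\beta_{i-1,j}(J\cap K)$ holds for all $i,j$ if and only if every map $\Tor_i(J\cap K)_j\to\Tor_i(J)_j\oplus\Tor_i(K)_j$ induced by the inclusions is the zero map; in that case the long exact sequence breaks into short exact pieces $0\to\Tor_i(J)_j\oplus\Tor_i(K)_j\to\Tor_i(I)_j\to\Tor_{i-1}(J\cap K)_j\to 0$, which is exactly the asserted additivity. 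Verifying this vanishing is the entire difficulty and the only place the hypotheses are used: taking $\mathcal{G}(J)$ to consist of all generators of $I$ divisible by the single variable $x_i$ and assuming $J$ has a linear resolution constrains the internal degrees in which $\Tor(J)$ can be nonzero, which forces the comparison maps out of $\Tor(J\cap K)$ to vanish. I would invoke \cite[Corollary 2.7]{FHT} for this step rather than reprove it; this is the main obstacle of the lemma.

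Granting the additive identity, the regularity formula is bookkeeping against $\reg(M)=\max\{j-i:\beta_{i,j}(M)\neq 0\}$. For the inequality ``$\leq$'', any $(i,j)$ with $\beta_{i,j}(I)\neq 0$ forces one of the three nonnegative summands to be nonzero, giving $j-i\leq\reg(J)$, or $j-i\leq\reg(K)$, or $j-(i-1)\leq\reg(J\cap K)$, the last of which reads $j-i\leq\reg(J\cap K)-1$; maximizing over such $(i,j)$ yields $\reg(I)\leq\max\{\reg(J),\reg(K),\reg(J\cap K)-1\}$. For ``$\geq$'', I would use that the three terms are nonnegative and cannot cancel: a pair realizing $\reg(J)$ gives $\beta_{i,j}(I)\geq\beta_{i,j}(J)>0$, so $\reg(I)\geq\reg(J)$; the same works for $K$; and a pair realizing $\reg(J\cap K)$, after the shift $i\mapsto i+1$, gives $\reg(I)\geq\reg(J\cap K)-1$. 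The two directions together give the asserted equality.
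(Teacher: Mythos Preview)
Your proposal is correct. The paper does not actually prove this lemma: it records the splitting criterion as a direct citation of \cite[Corollary 2.7]{FHT} and asserts the regularity formula as an immediate consequence of Definition~\ref{bettispliting}, with no argument given. Your write-up follows the same route---citing \cite{FHT} for the hard part---but goes further by spelling out the Mayer--Vietoris/Tor mechanism behind the splitting and by giving the easy two-sided bookkeeping that extracts $\reg(I)=\max\{\reg(J),\reg(K),\reg(J\cap K)-1\}$ from the additive Betti identity; the paper omits both of these details.
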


\begin{Definition}  {\em (\cite[Definition 2.1]{F})}\label{polarization}
Let $I\subset S$ be a monomial ideal with $\mathcal{G}(I)=\{u_1,\ldots,u_m\}$ where $u_i=\prod\limits_{j=1}^n x_j^{a_{ij}}$ for $i=1,\ldots,m$.
The polarization of $I$, denoted by $I^{\calP}$, is a squarefree monomial ideal in the polynomial ring $S^{\calP}$
$$I^{\calP}=(\calP(u_1),\ldots,\calP(u_m))$$
where $\calP(u_i)=\prod\limits_{j=1}^n \prod\limits_{k=1}^{a_{ij}} x_{jk}$ is a squarefree monomial  in $S^{\calP}=\KK[x_{j1},\ldots,x_{ja_j}\mid j=1,\ldots,n]$ and $a_j=\max\{a_{ij}| i=1,\ldots,m\}$ for  $1\leq j\leq n$.
\end{Definition}

A monomial ideal  and its polarization  share many homological and
algebraic properties.  The following is a  useful property of the polarization.

\begin{Lemma}{\em (\cite[Corollary 1.6.3]{HH})}\label{polar}
 Let $I\subset S$ be a monomial ideal and $I^{\calP}\subset S^{\calP}$ be its polarization.
Then
\[
\beta_{ij}(I)=\beta_{ij}(I^{\calP})
\]
 for all $i$ and $j$. In particular, $\reg(I)=\reg(I^{\calP})$.
\end{Lemma}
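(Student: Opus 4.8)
The plan is to deduce both assertions from one structural fact, namely that $I^{\calP}$ differs from $I$ by a regular sequence of linear forms, together with the standard principle that reducing a minimal graded free resolution modulo such a form leaves the graded Betti numbers unchanged. Write $S^{\calP}=\KK[x_{jk}\mid 1\le j\le n,\ 1\le k\le a_j]$ and set
\[
\theta_{jk}=x_{jk}-x_{j1}\qquad (1\le j\le n,\ 2\le k\le a_j),
\]
a list of $\sum_{j}(a_j-1)$ linear forms. First I would record the elementary observation that substituting $x_{jk}\mapsto x_{j1}$ sends each $\calP(u_i)=\prod_j\prod_{k=1}^{a_{ij}}x_{jk}$ to $\prod_j x_{j1}^{a_{ij}}=u_i$; hence, under the identification $x_{j1}=x_j$, there is a graded isomorphism
\[
S^{\calP}/\bigl(I^{\calP}+(\theta_{jk}\mid j,k)\bigr)\cong S/I.
\]

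Next I would isolate the homological engine. Let $R$ be a polynomial ring, $M$ a finitely generated graded $R$-module, and $\theta\in R$ a linear form that is a non-zerodivisor on $M$ and for which $R/\theta R$ is again a polynomial ring (as happens for a difference of variables). Taking a minimal graded free resolution $F_\bullet\to M$ over $R$ and computing $\Tor_i^R(M,R/\theta R)$ via the length-one resolution $0\to R\xrightarrow{\theta}R\to R/\theta R\to 0$ shows these $\Tor$ modules vanish for $i\ge 1$ precisely because $\theta$ is a non-zerodivisor on $M$. Thus $F_\bullet\otimes_R R/\theta R$ is a graded free resolution of $M/\theta M$ over $R/\theta R$, and it stays minimal since the differential entries are homogeneous of positive degree and so remain in the maximal ideal after reduction. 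Consequently $\beta_{ij}^{R/\theta R}(M/\theta M)=\beta_{ij}^{R}(M)$ for all $i,j$.

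With these two ingredients the argument is to de-polarize one variable at a time: order the forms $\theta_{jk}$ and, at each stage, apply the engine with $R$ the current polynomial ring, $M$ the current partially polarized monomial quotient, and $\theta$ the next form in the list. Starting from $S^{\calP}/I^{\calP}$ and passing through the intermediate monomial quotients one reaches $S/I$, with graded Betti numbers unchanged at every step. Combined with the isomorphism above this gives $\beta_{ij}(S^{\calP}/I^{\calP})=\beta_{ij}(S/I)$, and since $\beta_{ij}(I)=\beta_{i+1,j}(S/I)$ the claimed equality $\beta_{ij}(I)=\beta_{ij}(I^{\calP})$ follows. The statement $\reg(I)=\reg(I^{\calP})$ is then immediate from $\reg=\max\{j-i\mid \beta_{ij}\neq 0\}$.

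The crux, and the step I expect to be genuinely technical rather than formal, is verifying that the $\theta_{jk}$ really do form a regular sequence on $S^{\calP}/I^{\calP}$, i.e.\ that at each de-polarization stage the relevant form $y-x$ is a non-zerodivisor on the current monomial quotient. This reduces to a single-step statement: if a monomial ideal in which a new variable $y$ occurs only to the first power is the one-step polarization of its image under $y\mapsto x$, then $y-x$ lies in no associated prime of the quotient. I would establish this by a direct colon-ideal computation, examining membership $(y-x)f\in \tilde I$ and using that the associated primes of a monomial ideal are generated by subsets of the variables; once this non-zerodivisor property is in hand, everything else in the proof is formal.
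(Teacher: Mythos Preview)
Your proposal is correct and is precisely the standard argument for this result. Note that the paper does not actually prove this lemma: it is stated with a citation to \cite[Corollary 1.6.3]{HH} and no proof is given in the paper itself. Your argument via the regular sequence of linear forms $x_{jk}-x_{j1}$ on $S^{\calP}/I^{\calP}$ and the preservation of minimal graded free resolutions under reduction modulo such a sequence is exactly the proof one finds in the cited reference, so there is nothing further to compare.
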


\medskip

\section{Regularity of the edge ideal of an edge-weighted integrally closed tree }
\label{sec:regtree}

In this section, we will give precise formulas for the  regularity of the edge ideals of  edge-weighted integrally closed trees. We first recall the definition of the integral closure of an ideal.

\begin{Definition}{\em (\cite[Definition 1.4.1]{HH})}\label{integrallyclosed}
 Let  $I$  be an ideal in a ring $R$.
    An element $f \in R$ is  said to be {\em integral} over $I$ if there exists an equation
    \[
    f^k+c_1f^{k-1}+\dots+c_{k-1}f+c_k=0 \text{\ \ with\ \ }c_i \in I^i.
    \]
   The set $\overline{I}$ of elements in $R$ which are integral over $I$  is the \emph{integral closure} of $I$.
If $I=\overline{I}$, then $I$  is said to be  {\em integrally closed}.
    An edge-weighted graph $G_\omega$ is said to be  {\em integrally closed} if its edge ideal $I(G_\omega)$ is integrally closed.
    \end{Definition}

According to \cite[Theorem 1.4.6]{HH}, every edge-weighted graph $G_\omega$ with trivial weights is integrally closed. The following lemma gives a complete characterization of a non-trivial edge-weighted graph that is integrally closed.

 \begin{Lemma}{\em (\cite[Theorem 3.6]{DZCL})}\label{integral}
 If  $G_\omega$ is a  non-trivial edge-weighted graph, then $I(G_\omega)$ is  integrally closed if and only if  $G_\omega$  does not contain  any of the following three graphs as induced subgraphs.
    \begin{enumerate}
    \item  A path $P_\omega$ of length $2$ where  all edges have non-trivial weights.
    \item The  disjoint union $P_\omega\sqcup P_\omega$ of two paths $P_\omega$  of length $1$ where all edges have non-trivial weights.
    \item A $3$-cycle $C_\omega$ where all edges have non-trivial weights.
\end{enumerate}
\end{Lemma}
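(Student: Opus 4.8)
The plan is to prove the characterization of integrally closed non-trivial edge-weighted graphs by reducing the statement to a local question about the generators of $I(G_\omega)$ and then invoking a known criterion for when a monomial ideal is integrally closed. The starting point is the observation that $\overline{I}$ for a monomial ideal $I$ is again monomial, and its integral closure is controlled entirely by the Newton polytope (the convex hull of the exponent vectors of $\G(I)$ together with the positive orthant): a monomial $x^{\ab}$ lies in $\overline{I}$ if and only if $\ab$ belongs to this Newton polyhedron, or equivalently if some power $x^{c\ab}\in I^c$. So the task is to decide exactly when every lattice point of the Newton polyhedron already lies in the exponent set of $I(G_\omega)$ itself.

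First I would set up notation: for each edge $e=xy\in E(G_\omega)$ the generator is $x^{\omega(e)}y^{\omega(e)}$, so the exponent vector has two equal nonzero coordinates of value $\omega(e)$ and zeros elsewhere. Thus $I(G_\omega)$ fails to be integrally closed precisely when there is a monomial $u\notin I(G_\omega)$ but $u^c\in I(G_\omega)^c$ for some $c\ge 2$; equivalently, some lattice point of the Newton polyhedron is missed by the generating set. The key reduction is that such a ``witness'' to non-integral-closedness can always be taken to involve only a small number of edges, so it suffices to analyze how two or three edges interact. This is where the three forbidden configurations should emerge: the obstruction to integral closure is a local phenomenon supported on at most three edges, each of which must be non-trivial (a trivial edge contributes a squarefree generator that cannot help produce a missing lattice point by the squarefree/normal behavior recorded via \cite[Theorem 1.4.6]{HH}).

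The heart of the argument is the two-way implication. For the ``only if'' direction I would show that each of the three listed induced subgraphs forces a non-integrally-closed ideal, by exhibiting in each case an explicit monomial $u$ with $u\notin I(G_\omega)$ but $u^2\in I(G_\omega)^2$ (or a suitable power). For instance, for a path $x-y-z$ with both edges of weight $\ge 2$, one checks that the mixed monomial built from $x,y,z$ lies in the integral closure but not the ideal; the disjoint-union case and the $3$-cycle case are handled by analogous explicit witnesses. Conversely, for the ``if'' direction I would argue contrapositively: assuming $I(G_\omega)$ is not integrally closed, pick a minimal witness monomial $u$ and analyze its support $W=\supp(u)$ together with the edges whose generators enter the relation $u^c\in I(G_\omega)^c$. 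Minimality of the witness should force the induced subgraph $G_\omega[W]$ to be exactly one of the three listed shapes, since a larger or differently-weighted configuration would either already contain the generator (contradicting $u\notin I$) or could be shrunk.

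The main obstacle I anticipate is the combinatorial bookkeeping in the ``if'' (contrapositive) direction: one must show that a minimal non-integrally-closed witness cannot be supported on more than three vertices and cannot avoid being one of the three prescribed forms. This requires translating the integral-dependence relation $u^c \in I(G_\omega)^c$ into a statement about covering the exponent vector $c\cdot\ab$ by $c$ edge-generators and then extracting, via a pigeonhole/averaging argument on weights, a two- or three-edge sub-relation that already violates integral closure. Since the paper cites \cite[Theorem 3.6]{DZCL} as the source of this lemma, I would expect the cleanest route is to quote the normality/integral-closure machinery for monomial ideals from \cite{HH} to reduce to the polyhedral condition, and then carry out the finite case analysis on the number and weights of incident edges; the delicate part is ensuring the minimal witness genuinely localizes to an \emph{induced} subgraph of one of the three types rather than merely a subgraph.
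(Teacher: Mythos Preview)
The paper does not prove this lemma at all: it is stated with a citation to \cite[Theorem 3.6]{DZCL} and no argument is given here. So there is no ``paper's own proof'' to compare your proposal against; the present paper simply imports the result.

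Your outline is a plausible sketch of how such a characterization could be proved---the Newton-polyhedron description of $\overline{I}$ for monomial ideals is the right starting point, and the direction producing explicit witnesses for each forbidden configuration is straightforward. The contrapositive direction you describe (localizing a minimal witness to at most three vertices and forcing one of the three shapes) is indeed where the work lies, and your proposal does not actually carry out that case analysis; it only names it as an obstacle. If you were to complete a self-contained proof you would need to make precise the ``pigeonhole/averaging'' step and show that any integral-dependence relation $u^c\in I(G_\omega)^c$ with $u\notin I(G_\omega)$ already forces two non-trivial edges that share a vertex, are disjoint, or form a triangle---and that this configuration is induced. But for the purposes of this paper, no proof is expected: the statement is a quoted lemma, and your proposal goes well beyond what the paper itself supplies.
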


From the lemma above, we can derive
\begin{Corollary} \label{integ}
Let $P_{\omega}$ be a  non-trivial  integrally closed path  with $n$ vertices, then  it can have  at most two edges with non-trivial weights.
\end{Corollary}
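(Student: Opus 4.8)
The plan is to use the structural characterization of integrally closed paths given in \Cref{integral} and argue by contradiction. Let $P_\omega$ be a non-trivial integrally closed path on vertices $x_1, x_2, \ldots, x_n$, with edges $e_i = x_i x_{i+1}$ for $1 \le i \le n-1$. Suppose for contradiction that $P_\omega$ has at least three non-trivial edges. I would then show that among any three non-trivial edges, one can extract an induced subgraph of one of the two forbidden types from \Cref{integral}, namely either a path of length $2$ with both edges non-trivial, or a disjoint union of two non-trivial edges.

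The key observation is a case analysis on the relative positions of the non-trivial edges along the path. First I would note that no two non-trivial edges can be \emph{adjacent} (i.e.\ share a vertex): if $e_i$ and $e_{i+1}$ were both non-trivial, then the subpath on $\{x_i, x_{i+1}, x_{i+2}\}$ is an induced path of length $2$ with both edges non-trivial, which is forbidden type (1). Hence any two non-trivial edges must be \emph{non-adjacent}, meaning they are separated by at least one vertex. But then any two such non-adjacent edges in a path are automatically induced as a disjoint union $P_\omega \sqcup P_\omega$ of two non-trivial edges of length $1$, since in a path no edge joins the endpoints of two edges that are already two apart. This already shows that having even two non-trivial edges would violate forbidden type (2)—so the real work is to interpret the corollary correctly.

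Here the subtlety I must be careful about is the following: two non-adjacent edges $e_i$ and $e_j$ with $j \ge i+2$ give a disjoint union of edges, but this is an \emph{induced} subgraph only when there is no edge of $P_\omega$ connecting a vertex of $e_i$ to a vertex of $e_j$. When $j = i+1$ the edges are adjacent (forbidden type (1)); when $j = i+2$, the vertex $x_{i+1}$ sits between them but the four vertices $x_i, x_{i+1}, x_{i+2}, x_{i+3}$ form a path, so the induced subgraph on $\{x_i, x_{i+1}\} \cup \{x_{i+2}, x_{i+3}\}$ is \emph{not} a disjoint union—it is connected via $e_{i+1}$. Thus two non-trivial edges at distance exactly $2$ along the path are permitted. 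Only when $j \ge i+3$ do we get a genuine induced disjoint union, which is forbidden type (2). So the correct reading is: \emph{at most two} non-trivial edges can occur, and they must be positioned so that no forbidden configuration arises.

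I would then complete the argument by assuming three non-trivial edges $e_i, e_j, e_k$ with $i < j < k$ and showing a contradiction regardless of spacing. By the adjacency observation, consecutive non-trivial edges in this list differ by at least $2$ in index. If any two of them differ by $\ge 3$, forbidden type (2) appears directly. Otherwise all consecutive gaps equal exactly $2$, forcing $j = i+2$ and $k = i+4$; but then $e_i$ and $e_k$ are at index-distance $4 \ge 3$, so the induced subgraph on their four vertices is a disjoint union of two non-trivial edges, again forbidden type (2). The main obstacle, and the step requiring the most care, is verifying that in each configuration the relevant four-vertex subgraph is genuinely \emph{induced}, i.e.\ checking precisely which path-edges are present among the chosen vertices; this is what distinguishes the permitted distance-$2$ case from the forbidden distance-$\ge 3$ case. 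Once that induced-subgraph bookkeeping is handled, \Cref{integral} delivers the contradiction and establishes the bound of at most two non-trivial edges.
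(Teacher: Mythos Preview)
Your proposal is correct and follows the same route the paper implicitly intends: the paper states the corollary without proof, simply as a consequence of \Cref{integral}, and your argument supplies exactly the forbidden-induced-subgraph check that makes this implication explicit. One small simplification: once you have ruled out adjacent non-trivial edges, the outer pair $e_i,e_k$ among any three $e_i<e_j<e_k$ automatically satisfies $k-i\ge 4\ge 3$, so the separate ``all gaps equal $2$'' subcase is not needed.
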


For a trivially weighted tree, we have
\begin{Lemma}{\em (\cite[Theorem 4.7]{BHT})}\label{treetri}
If $G_\omega$ is a trivial  weighted tree, then
 \[
 \reg(S/I(G_{\omega})^t)=2t+\nu(G)-1
 \]
for all $t \geq 1$, where $\nu(G)$ is the induced matching number of  the underlying graph $G$ of $G_\omega$.
\end{Lemma}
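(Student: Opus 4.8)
The plan is to strip away the weights and then pin down the regularity of the powers by two matching inequalities. Because $G_\omega$ carries only trivial weights, its edge ideal is the ordinary edge ideal $I(G)$ of the underlying forest $G$, so the weight function plays no role and the problem is the classical one of computing the regularity of the powers of $I(G)$. Recalling $\reg(S/J)=\reg(J)-1$, it is equivalent to work with the ideal itself, and the plan is to establish $\reg(I(G)^t)=2t+\nu(G)-1$ for all $t\ge 1$ by combining a lower bound valid for arbitrary graphs with an upper bound that exploits that $G$ is a forest.

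For the lower bound I would use the standard fact that the regularity of powers does not increase under passage to induced subgraphs, i.e.\ $\reg(I(H)^t)\le\reg(I(G)^t)$ whenever $H$ is an induced subgraph of $G$. Taking $H$ to be a maximum induced matching of $G$ --- a disjoint union of $\nu(G)$ edges --- the ideal $I(H)$ is a complete intersection of $\nu(G)$ quadrics in pairwise disjoint sets of variables. Iterating the sum formula of Lemma~\ref{sum}(2), starting from $\reg((xy)^s)=2s$ for a single edge, a short induction on the number of edges gives $\reg(I(H)^t)=2t+\nu(G)-1$, whence $\reg(I(G)^t)\ge 2t+\nu(G)-1$.

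The upper bound I would prove by induction on $t$. The base case $t=1$ is the statement $\reg(I(G))=\nu(G)+1$ for a forest, which I would settle by induction on $|V(G)|$: choose a leaf $x$ with neighbour $y$ and separate the generators of $I(G)$ divisible by $y$; these form an ideal with a linear resolution, so by Lemma~\ref{spliting} this is a Betti splitting, and the resulting decomposition expresses $\reg(I(G))$ through the edge ideal of a smaller forest, the induced matching number being tracked via Lemma~\ref{reglemma}. For the inductive step I would invoke Banerjee's colon technique: for edge ideals one bounds $\reg(I^{t})$ in terms of $\reg(I^{t-1})$ and the regularities of the colon ideals $(I^{t}:m)$, where $m$ ranges over the minimal generators of $I^{t-1}$ and each contribution is shifted by $\deg m = 2(t-1)$. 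Banerjee's structural description realizes $(I^{t}:m)$ as $I(G)$ enlarged by the quadrics $x_ux_v$ for vertices $u,v$ that are \emph{even-connected} with respect to $m$, so that $(I^{t}:m)$ is again (after polarizing the squares $x_u^2$, which by Lemma~\ref{polar} leaves the regularity unchanged) the edge ideal of an auxiliary graph $G^{m}$. If each such colon satisfies $\reg(I^{t}:m)\le\nu(G)+1$, then the shift by $2(t-1)$ yields $\reg(I^{t})\le 2(t-1)+\nu(G)+1=2t+\nu(G)-1$, matching the lower bound.

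The main obstacle is precisely the uniform estimate $\reg(I^{t}:m)\le\nu(G)+1$. This forces one to understand the even-connection graphs $G^{m}$ attached to a tree: although $G^{m}$ need not itself be a forest, one must show that adding the even-connection edges cannot push its induced matching number above $\nu(G)$, so that its edge-ideal regularity remains at most $\nu(G)+1$. Establishing this requires a careful combinatorial analysis of even-connected paths in $G$ --- controlling which new induced matchings the extra quadrics can create --- combined with the polarization step of Lemma~\ref{polar} to pass to a squarefree setting in which the induced-matching bound on regularity is available. This combinatorial control of the graphs $G^{m}$ is the technical heart of the argument; once it is in place, the two bounds coincide and the asserted linear formula holds for every $t\ge 1$.
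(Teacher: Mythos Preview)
The paper does not prove this lemma at all: it is quoted verbatim as \cite[Theorem~4.7]{BHT} and used as a black box throughout. So there is no ``paper's own proof'' to compare against.

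Your outline is a faithful reconstruction of the strategy actually used in \cite{BHT}. The lower bound via a maximum induced matching and Lemma~\ref{sum}(2) is standard and correct, and the base case $\reg(I(G))=\nu(G)+1$ for forests is Zheng's theorem (or its extension by H\`a--Van~Tuyl). For the inductive step you invoke Banerjee's bound
\[
\reg(I^{t})\le\max\bigl\{\reg(I^{t}\!:m)+2(t-1),\ \reg(I^{t-1})\bigr\},
\]
together with his description of $(I^{t}\!:m)$ via even-connected vertices, which is exactly the engine of \cite{BHT}.

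Where your sketch stops short is precisely where the work in \cite{BHT} lies: you flag, but do not establish, that the auxiliary graphs $G^{m}$ satisfy $\reg(I(G^{m}))\le\nu(G)+1$. In \cite{BHT} this is done by proving that when $G$ is a forest the polarization of each $(I^{t}\!:m)$ is again the edge ideal of a forest $G'$ with $\nu(G')\le\nu(G)$; the argument tracks how even-connection edges (and the squares $x_u^{2}$ arising when a vertex is even-connected to itself) interact with the tree structure. That step is genuinely nontrivial, and your proposal correctly identifies it as the heart of the matter without supplying it. As a proof \emph{plan} your write-up is accurate; as a proof it is incomplete exactly where \cite{BHT} does the real work.
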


Thus, we will now consider a non-trivial edge-weighted integrally closed tree which satisfies the following conditions.

\begin{Remark}\label{setup}
Let $G_\omega$ be a non-trivial edge-weighted integrally closed tree, and let $P_\omega$ be the longest path containing all non-trivial edges in $G_\omega$. Then $P_\omega$ is an induced subgraph of $G_\omega$. Suppose $V(P_\omega)=\{x_1,\ldots,x_k\}$, $\omega_i=\max\{\omega_t\mid\omega_t=\omega(e_t)\text{\ and }  e_t=x_tx_{t+1} \text{\ for each } 1\le t\le k-1\}$ with  $\omega_i\ge 2$ and $\omega_i\ge \omega_{i+2}$ if $e_{i+2}\in E({P_{\omega}})$ for simplicity. In this case, $G_\omega \backslash e_i$ is  the disjoint union of the two trees $G_{\omega}^{1}$ and $G_{\omega}^{2}$ with $x_{i} \in V({G_{\omega}^{1}})$ and $x_{i+1} \in V({G_{\omega}^{2}})$.

Furthermore, let $s_i(G_\omega)=\max \{|M|\!: M \text{\ is an induced matching of\ }G_\omega \text{\ containing}$
 $\text{the edge\ } e_i\}$,  $s_{i+2}(G_\omega)=\max \{|M|\!: M \text{\ is an induced matching of\ }G_\omega \text{\ containing}$
 $\text{the edge\ } e_{i+2}\}$ if $e_{i+2}\in E({G_{\omega}})$
 and $s_{i+2}(G_\omega^{2})=\max \{|M|\!: M \text{\ is an induced matching}$ $\text{of\ }G_\omega^{2} \text{\ containing the edge\ } e_{i+2}\}$  if $e_{i+2}\in E({G_{\omega}^{2}})$.
\end{Remark}

\begin{Lemma}\label{G3}
Let $G_\omega$ be a non-trivial integrally closed tree as in Remark \ref{setup}. Let $A=\{x_i,x_{i+1}\}$ and  $G_{\omega}^{3}=G_\omega \backslash N_G(A)$, then
$\nu(G_{\omega}^{3})=s_i(G_\omega)-1$.
\end{Lemma}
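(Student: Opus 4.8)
The plan is to prove the identity by exhibiting a correspondence between induced matchings of $G_\omega$ that contain $e_i$ and induced matchings of $G_\omega^3$, where the two differ in size by exactly the one edge $e_i$. The guiding observation is combinatorial: since $A=\{x_i,x_{i+1}\}$ and $e_i=x_ix_{i+1}$ is an edge, both $x_i$ and $x_{i+1}$ lie in $N_G(A)=N_G(x_i)\cup N_G(x_{i+1})$, so $G_\omega^3=G_\omega\setminus N_G(A)$ is the induced subgraph on $V(G_\omega)\setminus N_G(A)$ and in particular no longer contains $e_i$. Moreover $N_G(A)$ is precisely the set of vertices that an additional matching edge must avoid in order to remain in induced position relative to $e_i$.

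First I would establish $\nu(G_\omega^3)\ge s_i(G_\omega)-1$. Take a maximum induced matching $M$ of $G_\omega$ containing $e_i$, so $|M|=s_i(G_\omega)$, and set $M'=M\setminus\{e_i\}$. For every edge $uv\in M'$, the induced-matching condition against $e_i$ forces $u,v$ to be distinct from $x_i,x_{i+1}$ and non-adjacent to both; equivalently $u,v\notin N_G(A)$. Hence each edge of $M'$ lies in $G_\omega^3$, and since $G_\omega^3$ is an \emph{induced} subgraph, adjacency among these vertices is unchanged, so $M'$ is an induced matching of $G_\omega^3$ of size $s_i(G_\omega)-1$.

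For the reverse inequality I would start from a maximum induced matching $M'$ of $G_\omega^3$ and form $M=M'\cup\{e_i\}$. Because every vertex occurring in $M'$ lies outside $N_G(A)$, it is neither equal nor adjacent to $x_i$ or $x_{i+1}$, so $e_i$ is vertex-disjoint from, and in induced position with respect to, each edge of $M'$; adjacency among the edges of $M'$ is again inherited from the induced subgraph $G_\omega^3$. Thus $M$ is an induced matching of $G_\omega$ containing $e_i$, which gives $s_i(G_\omega)\ge\nu(G_\omega^3)+1$. Combining the two inequalities yields $\nu(G_\omega^3)=s_i(G_\omega)-1$.

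The argument is essentially careful bookkeeping of the induced-matching constraint, and the only point deserving attention is the verification that deleting exactly the vertex set $N_G(A)$ simultaneously removes $e_i$ and encodes all the adjacency restrictions that an induced matching imposes relative to $e_i$; once this is checked, both inclusions are immediate and no further estimate is required. I would remark that neither the tree structure nor the integral-closedness hypothesis of Remark \ref{setup} is actually needed for this particular equality — it is a general statement about induced matchings around a fixed edge — but it is recorded in this setting because that is where it will be applied in the regularity computations.
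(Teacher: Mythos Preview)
Your argument is correct and follows exactly the same two-inequality strategy as the paper: remove $e_i$ from a maximum induced matching containing it to land in $G_\omega^3$, and conversely adjoin $e_i$ to a maximum induced matching of $G_\omega^3$. Your write-up is simply more explicit about why the induced-matching condition relative to $e_i$ is equivalent to the endpoints avoiding $N_G(A)$, and your closing remark that neither the tree hypothesis nor integral closedness is needed here is accurate.
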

\begin{proof}
Let $M$ be an induced matching of  $G_{\omega}^{3}$ with cardinality $\nu(G_{\omega}^{3})$, then  $M\sqcup \{e_i\}$ is an induced matching of  $G_{\omega}$. Hence
$s_i(G_{\omega}) \ge\nu(G_\omega^{3})+1$. On the other hand, if $M'$ is an induced matching of $G_{\omega}$  containing  $e_i$ with cardinality $s_i(G_{\omega})$, then $M'\backslash \{e_i\}$ is an induced matching of  $G_\omega^{3}$. Thus, $\nu(G_\omega^{3}) \ge s_i(G_{\omega})-1$.
\end{proof}

 \begin{Lemma}\label{matching}
If $G$ is a  tree and $e$ is its edge,   then
\[
\nu(G \backslash e)-1 \le \nu(G) \le \nu(G \backslash e)+1.
\]Further, if $\nu(G)=\nu(G\backslash e)-1$ or $\nu(G)=\nu(G\backslash e)+1$, then $s(G)=\nu(G)$,
where $s(G)=\max \{|M|\!: M \text{\ is an induced matching of $G$  containing\ } e\}$.
\end{Lemma}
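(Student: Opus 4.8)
The plan is to prove the two-sided inequality $\nu(G\backslash e)-1\le \nu(G)\le \nu(G\backslash e)+1$ first, and then use a case analysis on when the extremes are achieved to deduce the statement $s(G)=\nu(G)$. For the inequalities, note that $G\backslash e$ is obtained from $G$ by deleting a single edge $e=uv$ while keeping all vertices, so every induced matching of $G\backslash e$ and every induced matching of $G$ live in the same vertex set; the difficulty is only that adjacency (hence the ``induced'' condition) differs on the pair $u,v$. I would first argue $\nu(G)\ge\nu(G\backslash e)-1$: take an induced matching $M$ of $G\backslash e$ of maximum size; it need not be induced in $G$ only because adding back $e$ may create an adjacency between an endpoint of $e$ and some edge of $M$, so I would delete from $M$ any edge incident (in $G$) to $u$ or $v$. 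Since $u$ and $v$ together can be adjacent to at most\ldots---and here is the point where being a tree matters---I expect at most one edge of $M$ needs to be removed, giving $\nu(G)\ge |M|-1$.

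For the reverse direction $\nu(G)\le \nu(G\backslash e)+1$, I would start from a maximum induced matching $M$ of $G$. If $e\notin M$, I would show $M$ is already an induced matching of $G\backslash e$ (removing an edge can only destroy adjacencies, never create induced-ness problems), giving $\nu(G\backslash e)\ge\nu(G)$. If $e\in M$, then $M\backslash\{e\}$ is an induced matching of $G\backslash e$, yielding $\nu(G\backslash e)\ge\nu(G)-1$. The key structural input throughout is that $G$ is a tree: since trees have no cycles, the edge $e=uv$ is a bridge, so $u$ and $v$ have no common neighbor and the two components of $G\backslash e$ interact with $M$ in a controlled way. I expect the bridge property to be exactly what forces ``at most one edge'' in the removal argument above.

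The second assertion---that attaining either extreme forces $s(G)=\nu(G)$---is where the main work lies, and I would treat the two extremes separately. Suppose $\nu(G)=\nu(G\backslash e)+1$. I would show that every maximum induced matching of $G$ must contain $e$: if some maximum induced matching $M$ of $G$ avoided $e$, then by the $e\notin M$ case above $M$ would remain induced in $G\backslash e$, forcing $\nu(G\backslash e)\ge\nu(G)$ and contradicting the hypothesis. Hence every maximum induced matching contains $e$, which by definition gives $s(G)=\nu(G)$. For the other extreme $\nu(G)=\nu(G\backslash e)-1$, I would again aim to show $e$ lies in some (equivalently, every) maximum induced matching of $G$; here I would start from a maximum induced matching $M$ of $G\backslash e$, use the bridge structure to argue that the single edge obstructing induced-ness in $G$ is forced to involve $e$, and rebuild a matching of size $\nu(G\backslash e)-1=\nu(G)$ containing $e$.

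The main obstacle I anticipate is the bookkeeping in the extremal cases: establishing the clean inequalities is routine once the bridge observation is in hand, but pinning down \emph{why} the extreme gap of $\pm 1$ forces the maximum induced matching to use $e$ requires carefully tracking which edges of a candidate matching become adjacent when $e$ is added or removed, and ruling out alternative maximum matchings that avoid $e$. The acyclicity of $G$ should make this tractable---no common neighbors of $u,v$ and a unique path structure---but the argument must be phrased uniformly enough to cover both signs of the gap without inadvertently assuming the conclusion.
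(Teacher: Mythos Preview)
Your overall strategy matches the paper's proof: establish the two inequalities by passing induced matchings back and forth between $G$ and $G\backslash e$, and then in each extremal case argue that $e$ belongs to some maximum induced matching of $G$. Two specific points, however, deserve correction.

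First, you repeatedly invoke the tree hypothesis (the bridge property, absence of common neighbors of $u$ and $v$) as the ``key structural input,'' but in fact the lemma and its proof hold for an arbitrary graph, and the paper's argument never uses that $G$ is a tree. The only relevant observation is that adding the single edge $e=uv$ back to $G\backslash e$ creates exactly one new adjacency, namely between $u$ and $v$; hence an induced matching $M$ of $G\backslash e$ fails to be induced in $G$ precisely when both $u$ and $v$ are endpoints of (necessarily distinct) edges $e_1,e_2\in M$. Removing either $e_1$ or $e_2$ then gives an induced matching of $G$, yielding $\nu(G)\ge\nu(G\backslash e)-1$. No acyclicity is needed.

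Second, your plan for the case $\nu(G)=\nu(G\backslash e)-1$ is slightly off. You speak of ``the single edge obstructing induced-ness,'' but as just noted there are \emph{two} obstructing edges $e_1,e_2$ (one through each endpoint of $e$), and these must exist for \emph{every} maximum induced matching $M$ of $G\backslash e$ under the hypothesis $\nu(G)=\nu(G\backslash e)-1$ (otherwise $M$ would already be induced in $G$, giving $\nu(G)\ge\nu(G\backslash e)$). The correct swap is then $(M\setminus\{e_1,e_2\})\cup\{e\}$, which is an induced matching of $G$ of size $\nu(G\backslash e)-2+1=\nu(G)$ containing $e$, so $s(G)\ge\nu(G)$ and hence $s(G)=\nu(G)$. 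Your proposed ``remove one edge and rebuild'' would produce a matching of the right size but not containing $e$.
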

\begin{proof} Let  $\mathcal{A}$ be the collection of induced matchings of  $G\backslash e$ with cardinality  $\nu(G \backslash e)$ and   $M\in \mathcal{A}$. If $M$ is an induced matching of $G$, then $\nu(G)\ge \nu(G \backslash e)$. Otherwise, there are two edges,  say $e_1,e_2$, in $M$, which incident on two endpoints of $e$. Thus every $M\backslash \{e_i\}$ is an induced matching of $G$, which enforces  $\nu(G)\ge \nu(G \backslash e)-1$. In particular, if this equality holds, then, by the definition of $\nu(G)$, for any  $M\in \mathcal{A}$,  there are two edges $e_1,e_2\in M$ which are incident on two endpoints of $e$. Thus $(M \backslash\{e_1,e_2\}) \sqcup \{e\}$ is an induced matching of  $G$. Hence $s(G) \ge \nu(G)$. Thus $s(G)=\nu(G)$, since $s(G)\le  \nu(G)$ always holds.

Next we prove that $\nu(G) \le \nu(G \backslash e)+1$. Conversely, if $\nu(G) \ge \nu(G \backslash e)+2$, then for any $M\in \mathcal{B}$, where $\mathcal{B}$ is the collection of induced matchings of  $G$ with cardinality  $\nu(G)$, we have $e\in M$.  Indeed, if $e\notin M$, then $M$ is an induced matching of $G\backslash e$, which implies  $\nu(G \backslash e)\ge \nu(G)\ge \nu(G \backslash e)+2$, a contradiction. So
$M\backslash \{e\}$ is an induced matching of $G\backslash e$,  forcing $\nu(G \backslash e)\ge \nu(G)-1\ge \nu(G \backslash e)+1$, a contradiction.
If  $\nu(G)=\nu(G\backslash e)+1$, then, for any induced matching $M$ of  $G$ with cardinality  $\nu(G)$, we have  $e\in M$. It follows that
 $s(G)\ge \nu(G)$. So $s(G)=\nu(G)$, since $s(G)\le  \nu(G)$ always holds.
\end{proof}

\begin{Lemma}\label{number}
Let $G_\omega$ be a non-trivial weighted integrally closed tree as in Remark \ref{setup}. If   $e_{i+2}\in E({G_{\omega}})$, then $s_{i+2}(G_\omega)=\nu(G_\omega^{1})+s_{i+2}(G_\omega^{2})$.
\end{Lemma}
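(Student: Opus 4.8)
The plan is to prove the equality by establishing the two inequalities separately, exploiting that $G_\omega$ is obtained from the disjoint union $G_\omega^1\sqcup G_\omega^2$ by reinserting the single bridge $e_i=x_ix_{i+1}$, and that the edge $e_{i+2}=x_{i+2}x_{i+3}$ lies entirely inside $G_\omega^2$ (since $x_{i+1},x_{i+2},x_{i+3}$ all belong to the component of $G_\omega\setminus e_i$ containing $x_{i+1}$, and in particular $e_{i+1}=x_{i+1}x_{i+2}\in E(G_\omega^2)$). The structural fact I will use repeatedly is that, because $G_\omega$ is a tree, deleting $e_i$ leaves exactly two components, so the only edge of $G_\omega$ with one endpoint in $V(G_\omega^1)$ and the other in $V(G_\omega^2)$ is $e_i$ itself; consequently $G_\omega^1$ and $G_\omega^2$ are induced subgraphs of $G_\omega$, and induced matchings of the two parts carry no hidden adjacencies within themselves.

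For the inequality $s_{i+2}(G_\omega)\ge \nu(G_\omega^1)+s_{i+2}(G_\omega^2)$, I would take a maximum induced matching $M_1$ of $G_\omega^1$ and a maximum induced matching $M_2$ of $G_\omega^2$ containing $e_{i+2}$, and argue that $M_1\sqcup M_2$ is an induced matching of $G_\omega$ containing $e_{i+2}$. That it is a matching is immediate from $V(G_\omega^1)\cap V(G_\omega^2)=\emptyset$. The induced condition can only fail through the bridge $e_i$, i.e.\ if some endpoint of an $M_1$-edge (necessarily $x_i$) is adjacent to some endpoint of an $M_2$-edge (necessarily $x_{i+1}$). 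The key observation ruling this out is that $x_{i+1}\notin V(M_2)$: since $e_{i+1}$ makes $x_{i+1}$ adjacent to the covered vertex $x_{i+2}$, any edge of $M_2$ meeting $x_{i+1}$ would be joined to $e_{i+2}$, contradicting that $M_2$ is induced. Hence $M_1\sqcup M_2$ is induced, giving $s_{i+2}(G_\omega)\ge |M_1|+|M_2|=\nu(G_\omega^1)+s_{i+2}(G_\omega^2)$.

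For the reverse inequality, I would start with a maximum induced matching $M$ of $G_\omega$ containing $e_{i+2}$ and split its edges according to whether they lie in $G_\omega^1$, lie in $G_\omega^2$, or cross between the two parts. The only possible crossing edge is $e_i$, and in fact $e_i\notin M$: otherwise $e_i$ and $e_{i+2}$ would both lie in $M$ while their endpoints $x_{i+1}$ and $x_{i+2}$ are adjacent via $e_{i+1}$, again contradicting that $M$ is induced. Therefore $M=M_1\sqcup M_2$ with $M_1$ an induced matching of $G_\omega^1$ and $M_2$ an induced matching of $G_\omega^2$ containing $e_{i+2}$, so that $|M|=|M_1|+|M_2|\le \nu(G_\omega^1)+s_{i+2}(G_\omega^2)$. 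Combining the two inequalities yields the claimed equality.

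The argument is purely combinatorial and uses neither the weights nor integral closure, only the tree-and-path structure of Remark \ref{setup}; it runs closely parallel to the proof of Lemma \ref{G3}. The one point genuinely requiring care is the induced-matching bookkeeping across the bridge $e_i$, and I expect this to be the only obstacle: in both directions everything reduces to the single remark that $x_{i+1}$ and $x_{i+2}$ are adjacent, so $e_i$ and $e_{i+2}$ can never coexist in an induced matching and $x_{i+1}$ can never be covered by a matching that uses $e_{i+2}$. I would isolate this observation at the outset and then invoke it in each half.
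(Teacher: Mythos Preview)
Your proposal is correct and follows essentially the same two-inequality strategy as the paper: split a maximum induced matching $M\ni e_{i+2}$ across the two components after noting $e_i\notin M$, and conversely glue maximum matchings $M_1$ of $G_\omega^1$ and $M_2\ni e_{i+2}$ of $G_\omega^2$. Your write-up is in fact more careful than the paper's, which simply asserts that $M_1\cup M_2$ is an induced matching of $G_\omega$ without comment; your observation that $x_{i+1}\notin V(M_2)$ (since $x_{i+1}$ is adjacent to the $M_2$-covered vertex $x_{i+2}$) is exactly what is needed to rule out the bridge $e_i$ spoiling inducedness, and isolating it up front is a good expository choice.
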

\begin{proof}
Let $M$ be an induced matching of $G_{\omega}$  containing  $e_{i+2}$ with $|M|=s_{i+2}(G_{\omega})$, then $e_i \notin M$. Thus $M=(M \cap E(G_\omega^{1}))\sqcup (M \cap E(G_\omega^{2}))$, since  $G_\omega \backslash e_i=G_{\omega}^{1}\sqcup G_{\omega}^{2}$. It follows that $|M|=|M \cap E(G_\omega^{1})|+|M \cap E(G_\omega^{2})|$. On the one hand, $M \cap E(G_\omega^{1})$ and $M \cap E(G_\omega^{2})$ are an induced matching of $G_\omega^{1}$ and $G_\omega^{2}$, respectively, and $e_{i+2}\in M \cap E(G_\omega^{2})$.
Thus $\nu(G_{\omega}^{1})\ge |M \cap E(G_\omega^{1})|$ and $s_{i+2}(G_{\omega}^{2})\ge |M \cap E(G_\omega^{2})|$, which forces  $\nu(G_{\omega}^{1})+s_{i+2}(G_{\omega}^{2})\ge |M \cap E(G_\omega^{1})|+|M \cap E(G_\omega^{2})|=|M|=s_{i+2}(G_{\omega})$. On the other hand, if $M_1$ and  $M_2$ are an induced matching  of $G_{\omega}^{1}$ and $G_{\omega}^{2}$  with cardinalities $\nu(G_\omega^{1})$ and $s_{i+2}(G_\omega^{2})$, respectively, and $e_{i+2}\in M_2$, then $M_1\cup M_2$ is an induced matching of $G_\omega$  containing the edge $e_{i+2}$.
Therefore, $s_{i+2}(G_{\omega}) \ge |M_1\cup M_2|=\nu(G_\omega^{1})+s_{i+2}(G_{\omega}^{2})$.
\end{proof}

To fully complete the proof of Theorem \ref{reg}, we  have a small result to show.
\begin{Theorem}\label{k=4}
Let $G_\omega$ be a non-trivial integrally closed tree as in Remark \ref{setup}, and let $P_\omega$ be its longest path  of length $(k-1)$ containing all non-trivial edges. If $k=4$, $V(P_\omega)=\{x_1,\ldots,x_4\}$, $\omega_2=\max\{\omega_t\mid\omega_t=\omega(e_t)\text{\ and }  e_t=x_tx_{t+1}\text{\ for any } 1\le t\le 3\}$,
 then  $\reg(I(G_\omega))=2\omega_2$.
\end{Theorem}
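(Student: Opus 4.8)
The plan is to first pin down the exact shape of $G_\omega$, then bound $\reg(I(G_\omega))$ from above and below.

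\emph{Structural reduction.} Since $P_\omega=x_1x_2x_3x_4$ is an induced subgraph carrying all non-trivial edges and is itself integrally closed, I would invoke Lemma~\ref{integral}: it forbids two adjacent non-trivial edges and two disjoint non-trivial edges. In $P_\omega$ every pair among $e_1,e_2,e_3$ is either adjacent (so $e_1e_2$ and $e_2e_3$ are excluded) or disjoint (so $e_1e_3$ is excluded), hence \emph{at most one} of them is non-trivial; as $\omega_2=\max\{\omega_1,\omega_2,\omega_3\}\ge 2$, the edge $e_2=x_2x_3$ is the unique non-trivial edge of the whole tree. Then I would use maximality of $P_\omega$: writing $G_\omega\setminus e_2=G_\omega^1\sqcup G_\omega^2$ with $x_2\in V(G^1_\omega)$, $x_3\in V(G^2_\omega)$, a path of length $2$ issuing from $x_2$ inside $G^1_\omega$ would, together with $e_2$ and $x_3x_4$, produce a path of length $4$ through $e_2$, contradicting $k=4$. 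Thus $G^1_\omega$ is a star centred at $x_2$ and, symmetrically, $G^2_\omega$ a star centred at $x_3$. After relabelling the leaves as $y_1=x_1,\dots,y_p$ at $x_2$ and $z_1=x_4,\dots,z_q$ at $x_3$, this gives
\[
I:=I(G_\omega)=(x_2y_1,\dots,x_2y_p,\ x_3z_1,\dots,x_3z_q,\ x_2^{\omega_2}x_3^{\omega_2}).
\]

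\emph{Lower bound.} This is immediate: $x_2^{\omega_2}x_3^{\omega_2}$ is a minimal generator of degree $2\omega_2$, so $\beta_{0,2\omega_2}(I)\ne 0$ and therefore $\reg(I)\ge 2\omega_2$.

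\emph{Upper bound by induction on $p+q$.} In the base case $p=q=1$, $G_\omega$ is the integrally closed path $x_1x_2x_3x_4$, for which $\reg(I)=2\omega_2$ is known (cf.\ \cite{ZDCL}) or can be confirmed by the same splitting. For the step, assuming without loss of generality $p\ge 2$, I would split off the leaf $y_p$: with $J=(x_2y_p)$ and $K$ generated by the remaining generators, $J$ collects all generators divisible by $y_p$ and is principal, hence has a linear resolution, so by Lemma~\ref{spliting} $I=J+K$ is a Betti splitting and
\[
\reg(I)=\max\{\reg(J),\ \reg(K),\ \reg(J\cap K)-1\}.
\]
Here $\reg(J)=2$, while $K=I(G_\omega\setminus y_p)$ is again a double star of the same type with $(p-1)+q$ leaves, so $\reg(K)=2\omega_2$ by induction. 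The crux is $J\cap K$: taking least common multiples gives
\[
J\cap K=x_2y_p\cdot(y_1,\dots,y_{p-1},\ x_3z_1,\dots,x_3z_q,\ x_2^{\omega_2-1}x_3^{\omega_2}).
\]
Using that multiplication by a monomial shifts the minimal free resolution (so $\reg(fL)=\deg f+\reg(L)$), that adjoining generators which are fresh variables leaves the regularity unchanged, and that $\reg(x_3M)=1+\reg(M)$, the inner ideal peels down to the principal ideal $(x_2^{\omega_2-1}x_3^{\omega_2-1})$ of regularity $2\omega_2-2$; tracking the two extra factors $x_2y_p$ (degree $2$) and $x_3$ (degree $1$) yields $\reg(J\cap K)=2\omega_2+1$, whence $\reg(J\cap K)-1=2\omega_2$. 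Therefore $\reg(I)=\max\{2,2\omega_2,2\omega_2\}=2\omega_2$, closing the induction.

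\emph{Main obstacle.} The conceptual hurdle is the structural reduction to a double star, which has to squeeze both the integral-closure constraints of Lemma~\ref{integral} (to eliminate all but one non-trivial edge) and the maximality of $P_\omega$ (to force the two stars). The computational heart is identifying $\mathcal{G}(J\cap K)$ correctly and reducing it to the principal ideal $(x_2^{\omega_2-1}x_3^{\omega_2-1})$; once that is in hand, the remaining steps are routine bookkeeping with regularities of monomial multiples and of disjoint-variable unions.
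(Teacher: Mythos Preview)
Your proof is correct, and the structural reduction to a double star is precisely what the paper uses (though the paper only states the conclusion, that $G_\omega^{1}\cup G_\omega^{2}$ is ``a disjoint union of two trivial stars''). Where you diverge is in the choice of Betti splitting. The paper polarizes $I$ and takes $J$ to be the principal ideal on the \emph{heavy} edge $x_2^{\omega_2}x_3^{\omega_2}$: after polarization a fresh variable such as $x_{2,2}$ isolates that generator, so Lemma~\ref{spliting} applies, and in a single step one reads off $\reg(J)=2\omega_2$, $\reg(K)=3$ (two disjoint trivial stars give $2+2-1$), and $J\cap K=JL$ with $L$ the prime ideal generated by $N_G(\{x_2,x_3\})\setminus\{x_2,x_3\}$, whence $\reg(J\cap K)-1=2\omega_2$; no induction is needed. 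You instead split off a \emph{light} leaf edge $x_2y_p$, which already satisfies the variable hypothesis of Lemma~\ref{spliting} without polarizing, but at the cost of an induction on $p+q$ and a longer unwinding of $J\cap K$. The paper's route is shorter; yours is polarization-free and arguably more elementary.
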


\begin{proof}Let $I=I(G_\omega)$ and $I^{\mathcal{P}}$ be its  polarization, then $I^{\mathcal{P}}=J^{\mathcal{P}}+K^{\mathcal{P}}$ is a Betti splitting by Lemma \ref{spliting}, where $\mathcal{G}(J)=\{x_i^{\omega_i} x_{i+1}^{\omega_i}\}$ and $\mathcal{G}(K)=\mathcal{G}(I) \backslash \mathcal{G}(J)$. It follows from Lemma \ref{spliting} and Lemma \ref{polar} that
 \begin{align}
 \reg(I)&=\reg(I^{\mathcal{P}})=\max\{\reg(J^{\mathcal{P}}), \reg(K^{\mathcal{P}}), \reg(J^{\mathcal{P}} \cap K^{\mathcal{P}})-1\}   \notag \\
 & =\max\{\reg(J), \reg(K), \reg(J \cap K)-1\}, \label{eqn: equality1}
 \end{align}
where  $K=I(G_\omega^{1})+I(G_\omega^{2})$,  $J \cap K=JL$, where $L$ is a prime ideal with minimal generators set $N_G(\{x_i,x_{i+1}\})\backslash \{x_i,x_{i+1}\}$. Note that  $\reg(J)=2\omega_2$, $\reg(J \cap K)-1=\reg(JL)-1=2\omega_2$, and $G_\omega^{1}\cup G_\omega^{2}$ is a disjoint union of two
 trivial star, implying that $\reg(K)=3$. Therefore, $\operatorname{reg}(I)=2\omega_2$ from formula (\ref{eqn: equality1}).
\end{proof}

\begin{Theorem}\label{reg}
Let $G_\omega$ be a non-trivial integrally closed tree as in Remark \ref{setup},  and let $P_\omega$ be its longest path  of length $(k-1)$ containing all non-trivial edges, with $V(P_\omega)=\{x_1,\ldots,x_k\}$, $\omega_i=\max\{\omega_t\mid\omega_t=\omega(e_t)\text{\ and }  e_t=x_tx_{t+1}\text{\ for each } 1\le t\le k-1\}$ where    $\omega_i\ge 2$ and $\omega_i\ge \omega_{i+2}$ if $e_{i+2}\in E({P_{\omega}})$. Then,
\begin{itemize}
\item[(1)] if $e_{i+2} \notin E(G_{\omega})$, then  $\reg(I(G_\omega))=2\omega_i$,
\item[(2)] if $e_{i+2} \in E(G_{\omega})$ and $\omega_{i+2}=1$, then $\reg(I(G_\omega))=\max \{\nu(G_\omega)+1, 2\omega_i+(s_i(G_\omega)-1)\}$,
\item[(3)] if $e_{i+2} \in E(G_{\omega})$ and  $\omega_{i+2}\ge 2$, then  $\reg(I(G_\omega))=\max \{\nu(G_\omega)+1, 2\omega_i+(s_i(G_\omega)-1), 2\omega_{i+2}+(s_{i+2}(G_\omega)-1)\}$.
\end{itemize}
\end{Theorem}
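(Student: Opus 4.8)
The plan is to extend the Betti-splitting computation of Theorem \ref{k=4} to arbitrary components $G_\omega^1,G_\omega^2$. Put $I=I(G_\omega)$ and $m=x_i^{\omega_i}x_{i+1}^{\omega_i}$, and split $I=J+K$ with $\mathcal G(J)=\{m\}$ and $\mathcal G(K)=\mathcal G(I)\setminus\{m\}$, so that $K=I(G_\omega^1)+I(G_\omega^2)$ is a sum of ideals in disjoint variables. As in Theorem \ref{k=4}, since the principal ideal $J$ has a linear resolution, Lemma \ref{spliting} (applied after polarizing and using Lemma \ref{polar}) shows $I=J+K$ is a Betti splitting, whence
\[
\reg(I)=\max\{\reg(J),\reg(K),\reg(J\cap K)-1\},\qquad \reg(J)=2\omega_i .
\]
Everything then reduces to computing $\reg(J\cap K)$ and $\reg(K)$ and reconciling the resulting maximum with the stated formula.

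For $\reg(J\cap K)$ I would first identify its minimal generators. Let $A=\{x_i,x_{i+1}\}$, $N=N_G(A)\setminus A$, $L=(z\mid z\in N)$, and $G_\omega^3=G_\omega\setminus N_G(A)$. Because all non-trivial edges lie on $P_\omega$ and are among $e_i,e_{i+2}$, every edge meeting $x_i$ or $x_{i+1}$ other than $e_i$ is trivial, and a direct $\lcm$ computation gives $J\cap K=m\cdot(L+I(G_\omega^3))$, where $L$ and $I(G_\omega^3)$ sit in mutually disjoint variable sets, both disjoint from $\{x_i,x_{i+1}\}$; moreover $G_\omega^3$ is trivially weighted since $x_{i+2}\in N$ destroys $e_{i+2}$. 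Then $\reg(L)=1$ and $\reg(I(G_\omega^3))=\nu(G_\omega^3)+1$ by Lemma \ref{treetri}, and the two parts of Lemma \ref{sum} yield $\reg(J\cap K)=2\omega_i+\nu(G_\omega^3)+1$. Rewriting $\nu(G_\omega^3)=s_i(G_\omega)-1$ via Lemma \ref{G3} gives $\reg(J\cap K)-1=2\omega_i+s_i(G_\omega)-1$, the $e_i$-term in all three cases.

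The regularity of $K$ is where the cases separate. In cases (1) and (2) the unique non-trivial edge $e_i$ has been removed, so $G_\omega^1,G_\omega^2$ are forests and Lemma \ref{sum} together with Lemma \ref{treetri} and additivity of $\nu$ over disjoint unions give $\reg(K)=\nu(G_\omega\setminus e_i)+1$. In case (3), $G_\omega^1$ is a forest while $G_\omega^2$ retains the single non-trivial edge $e_{i+2}$; applying cases (1)--(2) \emph{inductively} to $G_\omega^2$ gives $\reg(I(G_\omega^2))=\max\{\nu(G_\omega^2)+1,2\omega_{i+2}+s_{i+2}(G_\omega^2)-1\}$, and combining with $\reg(I(G_\omega^1))=\nu(G_\omega^1)+1$ and Lemma \ref{number} yields $\reg(K)=\max\{\nu(G_\omega\setminus e_i)+1,2\omega_{i+2}+s_{i+2}(G_\omega)-1\}$. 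In case (1) the hypotheses force $G_\omega^1,G_\omega^2$ to be stars (a branch of length $\ge2$ at $x_i$ or $x_{i+1}$ would, after reorienting $P_\omega$, produce a trivial $e_{i+2}$ and push us into case (2)); hence $G_\omega^3$ is edgeless, $s_i(G_\omega)=1$, and $\reg(K)\le3\le2\omega_i$, so $\reg(I)=2\omega_i$.

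It remains to take maxima. As $s_i(G_\omega)\ge1$, the term $\reg(J)=2\omega_i$ is absorbed by $\reg(J\cap K)-1$, so in cases (2)--(3) one gets $\reg(I)=\max\{\nu(G_\omega\setminus e_i)+1,\,2\omega_i+s_i(G_\omega)-1,\,2\omega_{i+2}+s_{i+2}(G_\omega)-1\}$, dropping the final term in case (2). The only gap to the asserted formula is $\nu(G_\omega\setminus e_i)$ versus $\nu(G_\omega)$: by Lemma \ref{matching} they differ by at most $1$, and whenever they differ one has $s_i(G_\omega)=\nu(G_\omega)$, so (using $\omega_i\ge2$) the term $2\omega_i+s_i(G_\omega)-1\ge\nu(G_\omega)+3$ dominates both $\nu(G_\omega)+1$ and $\nu(G_\omega\setminus e_i)+1$; thus replacing $\nu(G_\omega\setminus e_i)$ by $\nu(G_\omega)$ leaves the maximum unchanged, giving the stated formulas. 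I expect the main difficulty to be the exact determination of $\mathcal G(J\cap K)$ and the disjoint-variable splitting of $L+I(G_\omega^3)$, on which the clean value $\reg(J\cap K)=2\omega_i+s_i(G_\omega)$ rests; the case-(3) induction through Lemma \ref{number} is the secondary technical point.
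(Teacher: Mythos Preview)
Your argument is correct and follows essentially the same route as the paper's: the same Betti splitting $I=J+K$ with $J=(x_i^{\omega_i}x_{i+1}^{\omega_i})$, the same identification $J\cap K=J\cdot\bigl((N_G(A)\setminus A)+I(G_\omega^3)\bigr)$ giving $\reg(J\cap K)-1=2\omega_i+s_i(G_\omega)-1$ via Lemma~\ref{G3}, the same reduction of $\reg(K)$ to $\nu(G_\omega\setminus e_i)+1$ (plus, in case~(3), the recursive contribution $2\omega_{i+2}+s_{i+2}(G_\omega)-1$ through Lemma~\ref{number}), and the same use of Lemma~\ref{matching} to trade $\nu(G_\omega\setminus e_i)$ for $\nu(G_\omega)$. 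The only noticeable difference is in case~(1): the paper observes that $e_{i+2}\notin E(G_\omega)$ forces $k\le 3$ or $(k,i)=(4,2)$ and cites \cite[Theorem~3.1]{ZDCL} and Theorem~\ref{k=4}, whereas you argue directly that maximality of $P_\omega$ makes $G_\omega^1,G_\omega^2$ stars; both yield $\reg(I)=2\omega_i$.
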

\begin{proof}  If  $k\le 3$, or $k=4$ and $i=2$, then the desired results  are obtained from \cite[Theorem 3.1]{ZDCL} and Theorem \ref{k=4}. In the following, we assume that $k=4$ and $i\neq 2$, or $k\ge 5$. Since $\omega_i\ge 2$ and $G_\omega$ is integrally closed, $G_\omega$ has  at most two edges with non-trivial weights. If $G_\omega$ has  only one edge with non-trivial weights, then
by rearranging the subscripts of elements in $V(P_\omega)$, we can say $i\le \lfloor\frac{k}{2}\rfloor$. Thus we always have
$e_{i+2} \in E(G_{\omega})$. Let $I=I(G_\omega)$ and $I^{\mathcal{P}}$ be its  polarization, then $I^{\mathcal{P}}=J^{\mathcal{P}}+K^{\mathcal{P}}$ is a Betti splitting by Lemma \ref{spliting}, where $\mathcal{G}(J)=\{x_i^{\omega_i} x_{i+1}^{\omega_i}\}$, $\mathcal{G}(K)=\mathcal{G}(I) \backslash \mathcal{G}(J)=\mathcal{G}(I(G_\omega^{1}))+\mathcal{G}(I(G_\omega^{2}))$. Meanwhile,  $J \cap K=J L$, where $L$ is an ideal with minimal generators set $(N_G(A)\backslash A)\sqcup \mathcal{G}(I(G_\omega^{3}))$ with  $A=\{x_i,x_{i+1}\}$ and  $G_{\omega}^{3}=G_\omega \backslash N_G(A)$, and $\reg(J)=2\omega_i$. We consider the following two cases:

(1) If $\omega_{i+2}=1$, then $\reg(K)=\reg(I(G_\omega^{1}))+\reg(I(G_\omega^{2}))-1=\nu(G_\omega^{1})+\nu(G_\omega^{2})+1=\nu(G_\omega \backslash e_i)+1$ and $\reg(J\cap K)=\reg(JL)=2\omega_i+\nu(G_\omega^{3})+1=2\omega_i+s_i(G_\omega)$  by Lemmas \ref{sum}, \ref{treetri} and \ref{G3}. It follows from formula (\ref{eqn: equality1}) and  Lemma \ref{matching} that
\begin{align*}
\reg(I)&=\max\{\reg(J), \reg(K), \reg(J \cap K)-1\}\\
  &=\max\{2\omega_i, \nu(G_\omega \backslash e_i)+1, 2\omega_i+s_i(G_\omega)-1\}\\
  &=\max\{\nu(G_\omega \backslash e_i)+1, 2\omega_i+s_i(G_\omega)-1\}\\&
  = \begin{cases}
              \max \{\nu(G_\omega)+1, 2\omega_i+(s_i(G_\omega)-1)\},& \text{if $\nu(G_\omega \backslash e_i)=\nu(G_\omega)$}\\
              2\omega_i+s_i(G_\omega)-1,& \text{otherwise}
        \end{cases}\\
        &=\max \{\nu(G_\omega)+1, 2\omega_i+(s_i(G_\omega)-1)\}.
\end{align*}

(2)  If  $\omega_{i+2}\ge 2$, then
 \begin{align}
\reg(K)&=\reg(I(G_\omega^{1}))+\reg(I(G_\omega^{2}))-1=\nu(G_\omega^{1})+\reg(I(G_\omega^{2}))  \notag \\
&=\nu(G_\omega^{1})+\max \{\nu(G_\omega^{2})+1, 2\omega_{i+2}+(s_{i+2}(G_\omega^2)-1)\}  \notag \\
&=\max \{\nu(G_\omega \backslash e_i)+1,2\omega_{i+2}+(s_{i+2}(G_\omega)-1)\}\label{eqn: equality2}
\end{align}
where the third  and last equalities follow from the proof of the  above case (1), $G_\omega \backslash e_i=G_\omega^{1}\sqcup G_\omega^{2}$ and Lemma \ref{number}, respectively.  Additionally,
$\reg(J \cap K)=\reg(JL)=\reg(J)+\reg(L)=2\omega_i+(\nu(G_\omega^{3})+1)=2\omega_i+s_i(G_\omega)$ by Lemma \ref{G3}.
 Therefore, applying  formula (\ref{eqn: equality2}) and  Lemma \ref{matching}, we can determine that
 \begin{align*}
\reg(I)&=\max\{\reg(J), \reg(K), \reg(J \cap K)-1\}\\
  &=\max\{2\omega_i, \nu(G_\omega \backslash e_i)+1,2\omega_{i+2}+s_{i+2}(G_\omega)-1, 2\omega_i+s_i(G_\omega)-1\}\\
  &=\max\{ \nu(G_\omega \backslash e_i)+1,2\omega_{i+2}+s_{i+2}(G_\omega)-1, 2\omega_i+s_i(G_\omega)-1\}\\
  &=\max\{ \nu(G_\omega)+1,2\omega_{i+2}+s_{i+2}(G_\omega)-1, 2\omega_i+s_i(G_\omega)-1\}.
\end{align*}
 \end{proof}

 Another immediate consequence of Theorem \ref{reg} is \cite[Theorems 4.8 and  4.9]{DZCL}.
 \begin{Corollary}\label{path}
Let $P_{\omega}$ be a non-trivial integrally closed path with $n$ vertices, and  $\omega=\max\{\omega(e) \mid e \in E(P_{\omega})\}$. If
$n\le 4$, then $\reg(I(P_{\omega}))=2\omega$. If $n\ge 5$, then, by   symmetry  and Corollary \ref{integ},  we can assume that  $\omega_i\ge \omega_{i+2}$ and $\omega_i\ge 2$  for some $i\in [n-3]$,  where $e_i=x_ix_{i+1}$ and $\omega_i=\omega(e_i)$ for all $i \in [n-1]$.   Then
\[
\reg(I(P_{\omega}))=\max\{2\omega_i+\lfloor \frac{i-1}{3} \rfloor+\lfloor \frac{n-(i+1)}{3} \rfloor,2\omega_{i+2}+\lfloor \frac{i-2}{3} \rfloor+\lfloor \frac{n-i}{3} \rfloor\}.
\]
\end{Corollary}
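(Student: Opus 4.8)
The plan is to read the claim off directly from Theorem \ref{reg} by computing, for a path, the three combinatorial quantities $\nu(P_\omega)$, $s_i(P_\omega)$ and $s_{i+2}(P_\omega)$ that appear there, and then simplifying the resulting floor expressions. First observe that for $P_\omega$ itself the longest path containing all non-trivial edges is the whole path, so the setup of Remark \ref{setup} applies verbatim with $k=n$ and the vertices $x_1,\ldots,x_n$ in their given order. The case $n\le 4$ is immediate: it falls under the ``$k\le 3$, or $k=4$'' clause in the proof of Theorem \ref{reg} (equivalently \cite[Theorem 3.1]{ZDCL} and Theorem \ref{k=4}), yielding $\reg(I(P_\omega))=2\omega$ with $\omega=\omega_i$ the maximal weight. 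So I would assume $n\ge 5$ and, invoking symmetry together with Corollary \ref{integ} (at most two non-trivial edges), arrange $i\le n-3$; then $e_{i+2}=x_{i+2}x_{i+3}\in E(P_\omega)$, so only cases (2) and (3) of Theorem \ref{reg} can occur.

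The computational heart is the evaluation of the induced-matching quantities on a path, using that a sub-path on $m$ vertices has induced matching number $\lfloor (m+1)/3\rfloor$ (two edges of a path are induced-compatible exactly when their indices differ by at least $3$). Fixing $e_j=x_jx_{j+1}$ in an induced matching forbids $e_{j-2},e_{j-1},e_{j+1},e_{j+2}$, and the remaining admissible edges split into two induced-independent sub-paths, one on $x_1,\ldots,x_{j-2}$ and one on $x_{j+3},\ldots,x_n$. Hence
\begin{align*}
s_j(P_\omega) = 1 + \left\lfloor \tfrac{j-1}{3} \right\rfloor + \left\lfloor \tfrac{n-j-1}{3} \right\rfloor.
\end{align*}
Specializing to $j=i$ and $j=i+2$ and applying the floor identities $\lfloor \tfrac{i+1}{3}\rfloor=\lfloor \tfrac{i-2}{3}\rfloor+1$ and $\lfloor \tfrac{n-i-3}{3}\rfloor=\lfloor \tfrac{n-i}{3}\rfloor-1$ gives
\begin{align*}
2\omega_i + s_i(P_\omega) - 1 &= 2\omega_i + \left\lfloor \tfrac{i-1}{3} \right\rfloor + \left\lfloor \tfrac{n-(i+1)}{3} \right\rfloor, \\
2\omega_{i+2} + s_{i+2}(P_\omega) - 1 &= 2\omega_{i+2} + \left\lfloor \tfrac{i-2}{3} \right\rfloor + \left\lfloor \tfrac{n-i}{3} \right\rfloor,
\end{align*}
which are exactly the two terms of the asserted maximum.

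It then remains to discard the auxiliary term $\nu(P_\omega)+1$ coming from Theorem \ref{reg} and, in case (2), to reconcile the single-term answer with the two-term formula. For the first point I would write $\nu(P_\omega)=\lfloor \tfrac{n+1}{3}\rfloor=\lfloor \tfrac{n-2}{3}\rfloor+1$ and use the bound $\lfloor \tfrac{i-1}{3}\rfloor+\lfloor \tfrac{n-i-1}{3}\rfloor\ge \lfloor \tfrac{n-2}{3}\rfloor-1$ to get $\nu(P_\omega)\le s_i(P_\omega)+1$; since $\omega_i\ge 2$ this forces $\nu(P_\omega)+1\le s_i(P_\omega)+2 < 2\omega_i+s_i(P_\omega)-1$, so the $\nu(P_\omega)+1$ term is always dominated. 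For case (2), where $\omega_{i+2}=1$ and Theorem \ref{reg} returns only the first term, I would check directly (again from $\omega_i\ge 2$ and the elementary comparisons $\lfloor \tfrac{i-2}{3}\rfloor\le\lfloor \tfrac{i-1}{3}\rfloor$ and $\lfloor \tfrac{n-i}{3}\rfloor\le\lfloor \tfrac{n-i-1}{3}\rfloor+1$) that the second term $2+\lfloor \tfrac{i-2}{3}\rfloor+\lfloor \tfrac{n-i}{3}\rfloor$ does not exceed the first, so adjoining it to the maximum is harmless and both cases (2) and (3) collapse into the single two-term expression of the corollary.

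The routine but delicate part, which I expect to be the main obstacle, is purely the floor arithmetic: counting the vertices of the two residual sub-paths without an off-by-one error, matching the two normal forms of the floor expressions, and verifying the two domination inequalities uniformly across all residues of $i$ and $n$ modulo $3$. None of this is conceptually deep, but it is precisely where a miscount would invalidate the stated formula, so I would treat these bookkeeping steps with care.
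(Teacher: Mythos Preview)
Your proposal is correct and follows essentially the same route as the paper: apply Theorem \ref{reg} with $k=n$, compute $\nu(P_\omega)=\lfloor\tfrac{n+1}{3}\rfloor$ and $s_j(P_\omega)=1+\lfloor\tfrac{j-1}{3}\rfloor+\lfloor\tfrac{n-j-1}{3}\rfloor$ for $j=i,i+2$, and then check that the term $\nu(P_\omega)+1$ is always dominated by $2\omega_i+s_i(P_\omega)-1$. The only substantive addition you make beyond the paper is the explicit verification, in the case $\omega_{i+2}=1$, that the second term $2+\lfloor\tfrac{i-2}{3}\rfloor+\lfloor\tfrac{n-i}{3}\rfloor$ is bounded by the first, so that the single-term answer of Theorem \ref{reg}(2) is consistent with the stated two-term maximum; the paper leaves this implicit. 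One small imprecision: for $n=4$ the clause you cite in the proof of Theorem \ref{reg} reads ``$k\le 3$, or $k=4$ and $i=2$'', so when $n=4$ and $i=1$ you are actually in case (2) or (3) of Theorem \ref{reg}, not in the base clause---but the conclusion $\reg(I(P_\omega))=2\omega$ is still immediate there, so this does not affect correctness.
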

\begin{proof}  If  $k\le 3$, or $k=4$ and $i=2$,  then  $\reg(I(P_{\omega}))=2\omega$ according to \cite[Theorem 3.1]{ZDCL} and Theorem \ref{k=4}.  Now we assume that   $k=4$ and $i\neq 2$, or $k\ge 5$.. In this case, we have $\nu(P_{\omega})=\lfloor \frac{n+1}{3} \rfloor$,
$s_i(P_{\omega})=\lfloor \frac{i-1}{3} \rfloor+\lfloor \frac{n-(i+1)}{3} \rfloor+1$ and $s_{i+2}(P_{\omega})=\lfloor \frac{i+1}{3} \rfloor+\lfloor \frac{n-(i+3)}{3} \rfloor+1$ by Lemmas \ref{G3} and  \ref{number}.
We consider two subcases: (i) If $\omega_{i+2}=1$, then, by Theorem \ref{reg}(2), we get $\reg(I(P_{\omega}))=\max \{\nu(P_{\omega})+1, 2\omega_i+(s_i(P_{\omega})-1)\}=2\omega_i+\lfloor \frac{i-1}{3} \rfloor+\lfloor \frac{n-(i+1)}{3} \rfloor$, since $2\omega_i+\lfloor \frac{i-1}{3} \rfloor+\lfloor \frac{n-(i+1)}{3} \rfloor>\lfloor \frac{n+1}{3} \rfloor+1$; (ii) If
$\omega_{i+2}\ge 2$, then, by Theorem \ref{reg}(3), we can conclude that $\reg(I(P_{\omega}))=\max \{\nu(P_{\omega})+1, 2\omega_i+(s_i(P_{\omega})-1),2\omega_{i+2}+(s_{i+2}(P_{\omega})-1) \}=\max\{2\omega_i+\lfloor \frac{i-1}{3} \rfloor+\lfloor \frac{n-(i+1)}{3} \rfloor,2\omega_{i+2}+\lfloor \frac{i-2}{3} \rfloor+\lfloor \frac{n-i}{3} \rfloor\}$, since $2\omega_i+\lfloor \frac{i-1}{3} \rfloor+\lfloor \frac{n-(i+1)}{3} \rfloor>\lfloor \frac{n+1}{3} \rfloor+1$.
\end{proof}

\medskip

\section{Regularity of powers of the edge ideal of an edge-weighted integrally closed tree }
\label{sec:powertree}

This section presents some linear upper bounds on  the  regularity of powers of the edge ideal of an edge-weighted integrally closed tree.  To support our argument, we use the following Lemma, which is similar to \cite[Lemma 4.10]{ZDCL}. The proof is omitted.

 \begin{Lemma}\label{colon}
   Let $G_\omega$ be a non-trivial integrally closed tree as in Remark \ref{setup}, and let $x$ be its leaf with $N_G(x)=\{y\}$ and $\omega(xy)=1$. Then
  \begin{itemize}
\item[(1)] $(I(G_\omega)^t:xy)=I(G_\omega)^{t-1}$;
\item[(2)] $(I(G_\omega)^t, x)=(I(G_\omega\backslash x)^t, x)$;
 \item[(3)] $((I(G_\omega)^t: x), y)=(I(G_\omega\backslash y)^t, y)$;
 \item[(4)] $(I(G_\omega)^t, xy)=(I(G_\omega\backslash x)^t, xy)$;
 \item[(5)] $(I(G_\omega)^t, y)=(I(G_\omega\backslash y)^t, y)$;
\item[(6)]  if $x'$ is also a leaf of $G_\omega$ such that  $N_G(x')=\{y\}$ and $\omega(x'y)=1$, then $((I(G_\omega)^t,xy):x'y)=(I(G_\omega\backslash x)^{t-1},x)$.
\end{itemize}
 \end{Lemma}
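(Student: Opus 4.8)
The plan is to exploit the single structural feature underlying all six identities: since $x$ is a leaf with $N_G(x)=\{y\}$ and $\omega(xy)=1$, the variable $x$ divides the unique generator $xy$ of $I(G_\omega)$ and no other. Writing $I=I(G_\omega)$ and $I_1=I(G_\omega\backslash x)$, an ideal in which $x$ does not occur, we have $I=(xy)+I_1$ and hence $I^t=\sum_{j=0}^{t}(xy)^j I_1^{t-j}$. Throughout I will use three elementary facts about monomial ideals: a monomial lies in a sum $A+B$ iff it lies in $A$ or in $B$; colon by a monomial distributes over sums, $((A+B):w)=(A:w)+(B:w)$; and $(wB:w)=B$, while $(B:w)=B$ whenever $w$ divides no generator of $B$. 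These reduce every statement to bookkeeping on exponents of monomials.

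The three ``quotient'' identities (2), (4), (5) are immediate from the decomposition. For (2) and (4), in $I^t=\sum_j (xy)^j I_1^{t-j}$ every summand with $j\ge 1$ lies in $(x)$, respectively in $(xy)$; discarding them leaves the $j=0$ term $I_1^{t}=I(G_\omega\backslash x)^t$, so $(I^t,x)=(I(G_\omega\backslash x)^t,x)$ and $(I^t,xy)=(I(G_\omega\backslash x)^t,xy)$. Identity (5) is the same phenomenon for $y$: writing $I=I(G_\omega\backslash y)+(\text{edges incident to }y)$, every generator incident to $y$ is divisible by $y$, so modulo $y$ only products of generators of $I(G_\omega\backslash y)$ survive, giving $(I^t,y)=(I(G_\omega\backslash y)^t,y)$. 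This last statement in fact holds for an arbitrary graph and an arbitrary vertex, a generality I will record for later use.

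Identity (1) is the crux and the one requiring real work. The inclusion $I^{t-1}\subseteq (I^t:xy)$ is immediate from $xy\in I$. For the reverse, take a monomial $m$ with $xy\,m\in I^t$; then $xy\,m$ lies in a single summand $(xy)^jI_1^{t-j}$. If $j\ge 1$, cancelling one factor $xy$ gives $m\in (xy)^{j-1}I_1^{t-j}\subseteq I^{t-1}$. The delicate case is $j=0$, where $xy\,m\in I_1^{t}$; choosing an $x$-free generator $h=g_1\cdots g_t$ of $I_1^{t}$ dividing $xy\,m$, we may drop the $x$ to obtain $h\mid y\,m$. The key lemma to establish here is that $(I_1^{t}:y)\subseteq I_1^{t-1}$: if $h\mid m$ we are done, and otherwise $\deg_y h=\deg_y m+1$ forces some factor $g_i=y^{\omega_e}z^{\omega_e}$, coming from an edge $e=yz$, to carry $y$; deleting this single factor lowers the $y$-degree by $\omega_e\ge 1$ and leaves a product of $t-1$ generators every exponent of which is now bounded by that of $m$, so it divides $m$ and witnesses $m\in I_1^{t-1}\subseteq I^{t-1}$. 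This degree-dropping step is the main obstacle, and it is precisely where $\omega(xy)=1$ enters: it guarantees that $xy\,m$ and $m$ differ only in the variables $x,y$, so a single edge-generator at $y$ can always absorb the excess. I emphasize that this identity needs only that $x$ is a leaf joined by a trivial edge, not the full longest-path setup of Remark \ref{setup}; this generality is what lets me reuse it on $G_\omega\backslash x$ below.

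Finally, (3) and (6) are assembled from the pieces. For (3), distributing colon over $I^t=\sum_j(xy)^jI_1^{t-j}$ gives $(I^t:x)=I_1^{t}+\sum_{j\ge 1}x^{j-1}y^{j}I_1^{t-j}$; every summand with $j\ge1$ is divisible by $y$, so adjoining $y$ collapses them and yields $((I^t:x),y)=(I_1^{t},y)=(I(G_\omega\backslash x)^t,y)$, and applying the general mod-$y$ fact from (5) to the tree $G_\omega\backslash x$ rewrites this as $(I(G_\omega\backslash y)^t,y)$, since $(G_\omega\backslash x)\backslash y$ and $G_\omega\backslash y$ carry the same edge ideal. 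For (6), use (4) to write $(I^t,xy)=I(G_\omega\backslash x)^t+(xy)$ and then colon by $x'y$ distributively: one term is $((xy):x'y)=(x)$ because $\gcd(xy,x'y)=y$, while the other is $(I(G_\omega\backslash x)^t:x'y)=I(G_\omega\backslash x)^{t-1}$ by applying the already-proved identity (1) to the tree $G_\omega\backslash x$, in which $x'$ remains a leaf joined to $y$ by a trivial edge. Summing the two pieces gives $((I^t,xy):x'y)=(I(G_\omega\backslash x)^{t-1},x)$, as claimed.
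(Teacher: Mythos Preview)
Your argument is correct. The paper itself omits the proof of this lemma, pointing instead to the analogous \cite[Lemma 4.10]{ZDCL}, so there is no in-paper proof to compare against; your write-up supplies a complete, self-contained verification using the decomposition $I(G_\omega)^t=\sum_{j}(xy)^jI(G_\omega\backslash x)^{t-j}$ and a clean $y$-degree drop for the key case of (1), then assembles (3) and (6) from the earlier parts exactly as one would expect.
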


 Let $G_\omega$ be an edge-weighted graph  on the vertex set $\mathbf{X}$.  For any subset $\mathbf{Y}\subset\mathbf{X}$, let  $H_\omega$ be its  induced subgraph on  $\mathbf{Y}$.
Let $\KK[\mathbf{X}]=\KK[x|x\in \mathbf{X}]$ be the polynomial ring  over a field $\KK$, then edge ideals $I(G_\omega)$ and  $I(H_\omega)$ are  monomial ideals
in the polynomial ring $\KK[\mathbf{X}]$ and $\KK[\mathbf{Y}]$,  respectively.

  \begin{Lemma}\label{indsubgraph}
  Let $G_\omega$ be an edge-weighted graph  and   $H_\omega$   its induced subgraph, then
  \[
        \beta_{ij}(\KK[\mathbf{Y}]/I(H_\omega)^{t})\le\beta_{ij}(\KK[\mathbf{X}]/I(G_\omega)^{t})
    \]
    for all $i,j$ and $t\ge 1$. In particular, $\reg(I(H_\omega)^{t})\le \reg(I(G_\omega)^{t})$
    for all $t\ge 1$.
  \end{Lemma}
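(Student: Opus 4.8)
The plan is to deduce the Betti-number inequality from the finer $\NN^{\mathbf X}$-graded (multigraded) Betti numbers, exploiting that $I(H_\omega)^t$ is the \emph{restriction} of $I(G_\omega)^t$ to the variables in $\mathbf Y$. First I would record the combinatorial identity that makes the induced hypothesis do its work. Write $(I(G_\omega)^t)_{\mathbf Y}$ for the ideal of $\KK[\mathbf Y]$ generated by those monomials in $I(G_\omega)^t$ whose support lies in $\mathbf Y$; I claim $(I(G_\omega)^t)_{\mathbf Y}=I(H_\omega)^t$. Indeed, a minimal generator of $I(G_\omega)^t$ is a product of $t$ (not necessarily distinct) edge generators $x^{\omega(xy)}y^{\omega(xy)}$ of $I(G_\omega)$, and such a product is supported on $\mathbf Y$ exactly when every edge used has both endpoints in $\mathbf Y$, i.e. (since $H_\omega$ is \emph{induced}) exactly when every edge used lies in $E(H_\omega)$; these products are precisely the generators of $I(H_\omega)^t$. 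Any $\mathbf Y$-supported monomial of $I(G_\omega)^t$ is divisible by some generator of $I(G_\omega)^t$, which is then forced to be $\mathbf Y$-supported, so it already lies in $I(H_\omega)^t$. This is the only place the induced hypothesis enters.

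For the main step I would pass to multigraded Betti numbers, regarding $\NN^{\mathbf Y}$ as the set of multidegrees $\mathbf a\in\NN^{\mathbf X}$ with $\supp(\mathbf a)\subseteq\mathbf Y$. Using the standard description of multigraded Betti numbers of a monomial ideal through reduced homology of the upper Koszul (equivalently, lcm-lattice) simplicial complexes, one has $\beta_{i,\mathbf a}(\KK[\mathbf X]/I(G_\omega)^t)=\dim_\KK\widetilde H_{i-2}(K^{\mathbf a}(I(G_\omega)^t);\KK)$, where, writing $m_{\mathbf a}$ for the monomial of multidegree $\mathbf a$, a face $\sigma$ lies in $K^{\mathbf a}(I)$ iff $m_{\mathbf a}/\prod_{z\in\sigma}z\in I$. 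For $\mathbf a$ with $\supp(\mathbf a)\subseteq\mathbf Y$ the complex $K^{\mathbf a}$ has all vertices in $\mathbf Y$, and each face condition concerns only the $\mathbf Y$-supported monomial $m_{\mathbf a}/\prod_{z\in\sigma}z$; by the identity of the previous paragraph this monomial lies in $I(G_\omega)^t$ iff it lies in $I(H_\omega)^t$. Hence $K^{\mathbf a}(I(G_\omega)^t)=K^{\mathbf a}(I(H_\omega)^t)$, so that
\[
\beta_{i,\mathbf a}(\KK[\mathbf X]/I(G_\omega)^t)=\beta_{i,\mathbf a}(\KK[\mathbf Y]/I(H_\omega)^t)\qquad\text{for all }\mathbf a\in\NN^{\mathbf Y}.
\]
Since every nonzero multigraded Betti number of $\KK[\mathbf Y]/I(H_\omega)^t$ sits in a multidegree supported on $\mathbf Y$, summing over $|\mathbf a|=j$ gives
\[
\beta_{ij}(\KK[\mathbf Y]/I(H_\omega)^t)=\sum_{\substack{\supp(\mathbf a)\subseteq\mathbf Y\\ |\mathbf a|=j}}\beta_{i,\mathbf a}(\KK[\mathbf X]/I(G_\omega)^t)\le\sum_{|\mathbf a|=j}\beta_{i,\mathbf a}(\KK[\mathbf X]/I(G_\omega)^t)=\beta_{ij}(\KK[\mathbf X]/I(G_\omega)^t),
\]
the extra nonnegative terms coming from multidegrees that meet $\mathbf X\setminus\mathbf Y$.

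Finally, the regularity statement is immediate from $\reg(-)=\max\{\,j-i:\beta_{ij}(-)\neq 0\,\}$ together with $\reg(I)=\reg(S/I)+1$: every pair $(i,j)$ with $\beta_{ij}(\KK[\mathbf Y]/I(H_\omega)^t)\neq 0$ also has $\beta_{ij}(\KK[\mathbf X]/I(G_\omega)^t)\neq 0$, so taking maxima yields $\reg(I(H_\omega)^t)\le\reg(I(G_\omega)^t)$. The main obstacle, and the only genuinely nontrivial input, is the comparison of multigraded Betti numbers across the two different polynomial rings $\KK[\mathbf Y]$ and $\KK[\mathbf X]$; the upper-Koszul/lcm-lattice formula is exactly what reduces this to the purely combinatorial fact that on $\mathbf Y$-supported multidegrees the defining simplicial complexes are unchanged, and it is there that the \emph{induced} hypothesis is indispensable. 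As an aside, one may first note that $\KK[\mathbf X]=\KK[\mathbf Y][\,z:z\in\mathbf X\setminus\mathbf Y\,]$ is a flat extension, so $\beta_{ij}^{\KK[\mathbf X]}(\KK[\mathbf X]/I(H_\omega)^t\KK[\mathbf X])=\beta_{ij}^{\KK[\mathbf Y]}(\KK[\mathbf Y]/I(H_\omega)^t)$; this moves the entire comparison into the single ring $\KK[\mathbf X]$, where the same multidegree argument applies to $I(H_\omega)^t\KK[\mathbf X]$ and $I(G_\omega)^t$.
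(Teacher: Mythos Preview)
Your proof is correct and takes a genuinely different route from the paper's. Both begin with the same restriction identity---that $I(H_\omega)^t$ consists exactly of the $\mathbf Y$-supported monomials of $I(G_\omega)^t$---and this is indeed the only place the induced hypothesis is used. From there the paper observes that this makes $\KK[\mathbf Y]/I(H_\omega)^t$ an \emph{algebra retract} of $\KK[\mathbf X]/I(G_\omega)^t$ (via the inclusion and the projection $\pi$ that kills the variables outside $\mathbf Y$), and then invokes a general result of Ohsugi--Herzog--Hibi \cite[Corollary 2.5]{OHH} stating that graded Betti numbers of an algebra retract are bounded above by those of the ambient ring. Your argument instead stays at the level of monomial combinatorics: you compute multigraded Betti numbers via upper Koszul simplicial complexes, note that for $\mathbf Y$-supported multidegrees these complexes literally coincide for the two ideals, and recover the graded inequality by summing. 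Your approach is more self-contained (no external black box) and actually yields the sharper statement that the multigraded Betti numbers agree in every $\mathbf Y$-supported multidegree; the paper's approach is quicker to write down and places the result in a broader conceptual frame, at the cost of importing \cite{OHH}.
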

 \begin{proof}
      First, we show that $I(H_\omega)^{t}=I(G_\omega)^{t}\cap \KK[\mathbf{Y}]$ for all $t\ge 1$.
    Since the natural generators of $I(H_\omega)^{t}$ are automatically contained in $I(G_\omega)^{t}$, we have $I(H_\omega)^{t}\subseteq I(G_\omega)^{t}\cap \KK[\mathbf{Y}]$. For the converse  inclusion, let $g\in I(G_\omega)^{t}\cap \KK[\mathbf{Y}]$. We can write $g$ as a finite sum
    \begin{equation*}
        g=\sum_{\substack{u_{\ell i}\in I(G_\omega),\\
     1\le i\le t}}h_{\ell 1,\ldots,\ell t}u_{\ell 1} \cdots u_{\ell t}
    \end{equation*}
  where each $h_{\ell 1,\ldots,\ell t}\in \KK[\mathbf{X}]$. Now, consider the $\KK$-algebra homomorphism $\pi : \KK[\mathbf{X}]\to \KK[\mathbf{Y}]$ by setting
    \[
        \pi(x)=
        \begin{cases}
            x,& \text{if $x$ is  a variable  in $\mathbf{Y}$,}\\
            0,& \text{otherwise}.
        \end{cases}
    \]
  Thus,
    \[
        \pi(u_{\ell i})=
        \begin{cases}
           u_{\ell i}, & \text{if $u_{\ell i}\in I(H_\omega)$,}\\
            0, & \text{otherwise.}
        \end{cases}
    \]
  Since $g\in \KK[\mathbf{Y}]$, we have $\pi(g)=g$. Therefore, we get
\[
         g=\sum_{\substack{u_{\ell i}\in I(G_\omega),\\
     1\le i\le t}}\pi(h_{\ell 1,\ldots,\ell t})\pi(u_{\ell 1}) \cdots \pi(u_{\ell t})=\sum_{\substack{u_{\ell i}\in I(G_\omega),\\
     1\le i\le t}}\pi(h_{\ell 1,\ldots,\ell t})u_{\ell 1} \cdots u_{\ell t}.
\]
    Thus, $g\in I(H_\omega)^{t}$. This completes our proof for $I(H_\omega)^{t}=I(G_\omega)^{t}\cap \KK[\mathbf{Y}]$.

 Consequently, $\KK[\mathbf{Y}]/I(H_\omega)^{t}$ is a $\KK$-subalgebra of $\KK[\mathbf{X}]/I(G_\omega)^{t}$.
    Let $\bar{\pi}:\KK[\mathbf{X}]/I(G_\omega)^{t}\\ \to \KK[\mathbf{Y}]/I(H_\omega)^{t}$ be the homomorphism induced by $\pi$. Since $\pi(I(G_\omega)^{t})\subseteq I(H_\omega)^{t}$, the map $\bar{\pi}$ is well-defined. Notice that the restriction of $\bar{\pi}$ to $\KK[\mathbf{Y}]/I(H_\omega)^{t}$ is the identity map. Thus, $\bar{\pi}$ is surjective, and
    $\KK[\mathbf{Y}]/I(H_\omega)^{t}$ is an algebra retract of $\KK[\mathbf{X}]/I(G_\omega)^{t}$.
    Now, the expected inequalities follow from \cite[Corollary 2.5]{OHH}.
\end{proof}

For any two  vertices $x,y\in V(G)$, their  \emph{distance} $d_G(x,y)$ is the length of the shortest path in $G$ with $x$ and $y$ as  endpoints; if no such path exists, we set $d_G(x,y)=\infty$.
 \begin{Notation}\label{distance}
Let $G_\omega$ be a non-trivial integrally closed tree as in Remark \ref{setup}, and let $P_\omega$ be its longest path  of length $(k-1)$ containing all non-trivial edges. Suppose   $V(P_\omega)=\{x_1,\ldots,x_k\}$.  Given an $x\in V(G_\omega)$, we set $d(x)=\min \{d_G(x, x_j) |1 \le j \le k\text{\ and\ }x\in V(G_\omega)\}$  and $d=\max \{d(x) |\text{\ for any\ } x \in V(G_\omega)\}$.
\end{Notation}

We first determine the regularity of powers of the edge ideal of a non-trivial integrally closed  caterpillar graph under the condition that $k=4$, $d=1$  and  $\omega_2=\max\{\omega(e) \mid e \in E(P_\omega)\}$.

\begin{Theorem}\label{caterpillar1}
Let $G_\omega$ be a non-trivial integrally closed  caterpillar as in Remark \ref{setup},  and let $P_\omega$ be its  path  of length $(k-1)$ containing all non-trivial edges.  If  $k=4$, $d=1$  and  $\omega_2=\max\{\omega(e) \mid e \in E(P_\omega)\}$, then  $\reg(I(G_\omega)^t) =2\omega_2 t$ for all $t\ge 1$.
\end{Theorem}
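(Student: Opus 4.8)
Let $I = I(G_\omega)$. Since Theorem~\ref{k=4} already gives $\reg(I) = 2\omega_2$ (the $t=1$ case), the goal is to show $\reg(I^t) = 2\omega_2 t$ for all $t$ by induction on $t$, establishing the upper bound $\reg(I^t) \le 2\omega_2 t$ and the matching lower bound separately. For the lower bound I would exhibit an explicit nonzero element of $\Tor$, or more concretely use the structure of the problem: the induced subgraph on $\{x_1,x_2,x_3\}$ is a path $P_\omega$ of length $2$ with $\omega_2 = \max$ weight, and by Lemma~\ref{indsubgraph} together with the corresponding formula for weighted paths (Corollary~\ref{path} applied to powers, or directly \cite[Theorem 3.1]{ZDCL}), we get $\reg(I^t) \ge \reg(I(P_\omega)^t) = 2\omega_2 t$. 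This reduces the whole theorem to proving the upper bound $\reg(I^t) \le 2\omega_2 t$.

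\textbf{Upper bound via colon ideals.} The main work is the upper bound, and the natural tool is Lemma~\ref{colon}, which controls how adding or deleting a whisker leaf changes the colon and sum ideals. Because $k=4$ and $d=1$, the graph $G_\omega$ is a caterpillar whose spine is essentially $x_1x_2x_3x_4$ and whose only extra vertices are trivial whiskers attached at distance $1$. The strategy is to peel off one trivial leaf $x$ at a time. Fixing such a leaf $x$ with $N_G(x)=\{y\}$ and $\omega(xy)=1$, I would use the standard short exact sequence
\[
0 \longrightarrow S/(I^t : xy)(-2) \xrightarrow{\ \cdot xy\ } S/I^t \longrightarrow S/(I^t, xy) \longrightarrow 0,
\]
together with Lemma~\ref{reglemma}, to bound $\reg(S/I^t)$ by the regularities of $S/(I^t:xy) = S/I^{t-1}$ (by Lemma~\ref{colon}(1)) and $S/(I^t, xy) = S/(I(G_\omega\setminus x)^t, xy)$ (by Lemma~\ref{colon}(4)). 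The first term contributes $\reg(I^{t-1}) \le 2\omega_2(t-1)$ by induction, and after the degree shift by $2$ this gives $2\omega_2(t-1) + 2 \le 2\omega_2 t$. The second term requires understanding $(I(G_\omega\setminus x)^t, xy)$: removing $x$ leaves a smaller caterpillar in the same family (or reduces to the base path $P_\omega$), and I would set up a secondary induction on the number of whisker leaves so that $\reg(I(G_\omega\setminus x)^t) \le 2\omega_2 t$ as well.

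\textbf{The main obstacle.} The delicate point is the ideal $(I^t, xy)$ and its further colon and quotient manipulations when $y = x_2$ or $y = x_3$ carries several whiskers, since then deleting one whisker still leaves the nontrivial edge weights intact and one must track that the maximal weight contributing to the bound stays $\omega_2$. Concretely, when $y$ is an interior spine vertex incident to the non-trivial edge, I expect to need Lemma~\ref{colon}(6) to handle a second whisker $x'$ at the same vertex, writing $((I^t,xy):x'y) = (I(G_\omega\setminus x)^{t-1}, x)$, and then verifying that each resulting piece has regularity at most $2\omega_2 t$. The bookkeeping—ensuring every branch of the induction closes with the clean bound $2\omega_2 t$ and that the base cases (pure weighted path, or a caterpillar with a single whisker) are covered by Theorem~\ref{k=4} and the $t$-power path formula—is where the real care is required; the homological estimates themselves are routine applications of Lemmas~\ref{reglemma} and~\ref{colon}.
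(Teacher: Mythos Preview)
Your overall strategy is sound and matches the paper's: induct on $t$ and on the number of whiskers, using the colon-ideal machinery of Lemma~\ref{colon} and Lemma~\ref{reglemma}. Your explicit lower bound via the induced path and Lemma~\ref{indsubgraph} is a clean alternative to the paper's reliance on the equality clause of Lemma~\ref{reglemma}.

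The difference lies in which short exact sequences you use, and here your plan runs into trouble. By multiplying by the \emph{edge} $xy$ you land on $(I^t,xy)=(I(G_\omega\setminus x)^t,xy)$, where the extra generator $xy$ still involves the interior vertex $y\in\{x_2,x_3\}$ of the smaller graph. Your proposed fix via Lemma~\ref{colon}(6) only applies when the next whisker $x'$ is attached at the \emph{same} vertex $y$; you do not say what to do once all whiskers at $y$ are exhausted but whiskers remain at the other interior vertex, nor how to handle the base case $\gamma=1$ where no second whisker exists. These cases are not covered by Lemma~\ref{colon}(6), and the accumulated terms $x_3u_1,\ldots,x_3u_m$ interact nontrivially with a subsequent colon by $x_2y_s$.

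The paper sidesteps this entirely by multiplying by the single leaf \emph{variable} $u_m$ (not the edge). Then $(I^t,u_m)=(I(G_\omega\setminus u_m)^t,u_m)$, and since $u_m$ is a fresh variable not appearing in $I(G_\omega\setminus u_m)$, its regularity equals $\reg(I(G_\omega\setminus u_m)^t)$ exactly---this is the clean induction on $\gamma$. A second single-variable multiplication of $I^t:u_m$ by $x_3$ yields $I^{t-1}$ (induction on $t$) and $(I(G_\omega\setminus x_3)^t,x_3)$; since removing $x_3$ kills the only non-trivial edge $e_2$, the graph $G_\omega\setminus x_3$ is a trivially weighted star with $\reg(I(G_\omega\setminus x_3)^t)=2t$. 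Three pieces, all immediately bounded. If you insist on your edge-first route, the quickest repair is to follow it with a multiplication by the leaf variable $x$ on $(I^t,xy)$: this recovers exactly the paper's three pieces, just in reversed order.
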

\begin{proof} Let $I=I(G_{\omega})$. We will prove the statements by induction on $t$. The case where $t=1$ follows from Theorem  \ref{k=4}. So we can assume that $t \ge 2$. Let  $V(P_\omega)=\{x_1,x_2,x_3,x_4\}$, $N_G(x_2)=\{x_1,x_3\}\cup A$ and $N_G(x_3)=\{x_2,x_4\}\cup B$, where $A=\{y_1,\ldots,y_\ell\}$ and  $B=\{u_1,\ldots,u_m\}$, see the graph displayed in Figure $1$. By convention, $A=\emptyset$ if $\ell=0$, and $B=\emptyset$ if $m=0$.  Since $d=1$, we have $\ell+m\ge 1$.

\vspace{0.5cm}
\begin{figure}[htbp]
	\begin{minipage}{0.5\textwidth}
		\centering
		\begin{tikzpicture}[thick, scale=0.8, every node/.style={scale=0.98}]]
			\draw[solid](0.5,0)--(5.5,0);
			\draw[solid](2,0)--(1.3,1);
			\draw[dotted](2,0)--(1.7,1);
			\draw[dotted](2,0)--(2.3,1);
			\draw[solid](2,0)--(2.7,1);
			\draw[solid](4,0)--(3.3,1);
			\draw[dotted](4,0)--(3.7,1);
			\draw[dotted](4,0)--(4.3,1);
			\draw[solid] (4,0)--(4.7,1);
			
			\draw[dotted](1.7,1.3)--(2.3,1.3);
			\draw[dotted](3.7,1.3)--(4.3,1.3);

			\shade [shading=ball, ball color=black] (0.5,0) circle (.07) node [below] {\scriptsize$x_1$};
			\shade [shading=ball, ball color=black] (2,0) circle (.07) node [below] {\scriptsize$x_2$};
			\shade [shading=ball, ball color=black] (4,0) circle (.07) node [below] {\scriptsize$x_3$};
			\shade [shading=ball, ball color=black] (5.5,0) circle (.07) node [below] {\scriptsize$x_4$};
			\shade [shading=ball, ball color=black] (1.3,1) circle (.07) node [above] {\scriptsize$y_1$};
			\shade [shading=ball, ball color=black] (2.7,1) circle (.07) node [above] {\scriptsize$y_\ell$};
			\shade [shading=ball, ball color=black] (3.3,1) circle (.07) node [above] {\scriptsize$u_1$};
			\shade [shading=ball, ball color=black] (4.7,1) circle (.07) node [above] {\scriptsize$u_m$};
		\end{tikzpicture}
	\end{minipage}\hfill
\caption{$Caterpillar\ graph\ with\ k=4, d=1, \omega_2\ge 2$}
\end{figure}
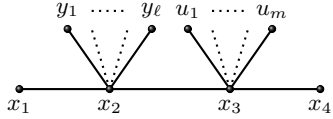
Suppose $\gamma\!:=\ell+m$. We prove the statements by induction
on $\gamma$. Since $G_\omega$ is a non-trivially weighted graph and $\omega_2=\max\{\omega(e) \mid e \in E(P_\omega)\}$, we obtain that
$\omega_2 \ge 2$ and $\omega_1=\omega_3=1$ by  Lemma \ref{integral}. Consider the exact sequences:
\begin{gather*}
\begin{matrix}
0 &\rightarrow& \frac{S}{I^t: u_m}(-1) &\stackrel{\cdot u_m}{\longrightarrow}& \frac{S}{I^t} &\rightarrow& \frac{S}{(I^t, u_m)} &\rightarrow& 0, \\
0 &\rightarrow& \frac{S}{(I^t: u_m):x_3}(-1) &\stackrel{\cdot x_3}{\longrightarrow}& \frac{S}{I^t: u_m} &\rightarrow& \frac{S}{((I^t: u_m),x_3)} &\rightarrow& 0.
\end{matrix}
\end{gather*}		
Note that $(I^t: u_m): x_3=I^{t-1}$, $((I^t: u_m),x_3)=(I(G_\omega \backslash x_3)^t, x_3)$ and $(I^t, u_m)=(I(G_\omega \backslash u_m)^t, u_m)$ by Lemma \ref{colon}, it follows  that
 $\reg((I^t: u_m): x_3)=\reg(I^{t-1})= 2\omega_2(t-1)$ and $\reg(((I^t: u_m), x_3))=\reg((I(G_\omega \backslash x_3)^t,x_3))=\reg(I(G_\omega \backslash x_3)^t)=2t$ by the inductive hypothesis on $t$
 and  \cite[Theorem 3.3]{ZDCL}.

 If $\gamma=1$, then by symmetry we can assume that $m=1,\ell=0$. In this case, we have  $\reg((I^t, u_1))=\reg((I(G_\omega \backslash u_1)^t, u_1))=\reg(I(G_\omega \backslash u_1)^t)=2\omega_2(t-1)+\reg(I(G_\omega \backslash u_1))=2\omega_2(t-1)+\reg(I(P_\omega))=2\omega_2t$ by \cite[Theorem 4.16]{ZDCL} and Corollary \ref{path}. The desired results follow from Lemma \ref{reglemma} and the above exact sequences with $m=1$.

 If $\gamma\ge 2$, then $\reg((I^t, u_m))=\reg((I(G_\omega \backslash u_m)^t,u_m))=\reg(I(G_\omega \backslash u_m)^t)=2\omega_2(t-1)+\reg(I(G_\omega \backslash u_m))=2\omega_2 t$ by the inductive hypothesis on  $\gamma$ and Theorem \ref{reg}. Therefore, the expected results  follow from Lemma \ref{reglemma} and the above exact sequences.
\end{proof}

\begin{Lemma}\label{deletepoint}
Let $G_\omega$ be a non-trivial integrally closed tree as in Remark \ref{setup}, and let $P_\omega$ be its longest path  of length $(k-1)$ containing all non-trivial edges. If $k\ge 5$ or $d\ge 2$, then there exists a trivial  induced  star subgraph $H_\omega$. Let $V(H_\omega)=\{x,y\}\cup C$, where   $C=\{z_1\ldots,z_p\}$  with $p\ge 1$, and $E(H_\omega)=\{xy,xz_1,\ldots,xz_p\}$ with $\deg_{G}(y)\ge 2$ and $\deg_{G}(z_i)=1$ for each $i\in [p]$. If $N_G(x)=\{y\}\cup C$ and $N_G(y)=\{x\} \cup D$, where $D=\{w_1,\ldots,w_q\}$ with $q\ge 1$. Let $J=I(G_\omega \backslash \{x,y\})$, then
$\reg(J) \le \reg(I(G_\omega))-1$.
\end{Lemma}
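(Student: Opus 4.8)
The plan is to funnel both regularities through a single induced-matching statistic. For an edge-weighted graph $H_\omega$ set $\phi(H_\omega)=\max_M\sum_{e\in M}(2\omega(e)-1)$, the maximum taken over all induced matchings $M$ of $H_\omega$. Writing $G'=G_\omega\setminus\{x,y\}$, I will establish the three facts $\reg(I(G_\omega))\ge 1+\phi(G_\omega)$, $\reg(J)=1+\phi(G')$, and $\phi(G_\omega)\ge\phi(G')+1$. Chaining them gives $\reg(I(G_\omega))\ge 1+\phi(G_\omega)\ge 2+\phi(G')=\reg(J)+1$, which is exactly the claimed inequality.

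For the lower bound, fix an induced matching $M$ of $G_\omega$ attaining $\phi(G_\omega)$ and let $H_\omega=G_\omega[V(M)]$. Because $M$ is induced, $H_\omega$ is the disjoint union of the $|M|$ weighted edges of $M$, so $I(H_\omega)$ is a sum, over disjoint variable sets, of the principal ideals $(x_e^{\omega(e)}y_e^{\omega(e)})$, each of regularity $2\omega(e)$. Iterating Lemma \ref{sum}(2) with $t=1$ (which reads $\reg(I+K)=\reg(I)+\reg(K)-1$ for disjoint variables) gives $\reg(I(H_\omega))=\sum_{e\in M}2\omega(e)-(|M|-1)=1+\phi(G_\omega)$, and Lemma \ref{indsubgraph} then yields $\reg(I(G_\omega))\ge\reg(I(H_\omega))=1+\phi(G_\omega)$.

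For the value of $\reg(J)$, note that $G'$ is a forest: deleting $x$ isolates the leaves $z_1,\dots,z_p$, and deleting $y$ breaks the remainder into the subtrees $T_1,\dots,T_q$ rooted at $w_1,\dots,w_q$. Isolated vertices do not affect the edge ideal, so $\reg(J)=\reg(I(T_1\sqcup\cdots\sqcup T_q))$. Each $T_j$ is an integrally closed tree (an induced subgraph of $G_\omega$), carrying at most the two non-trivial edges $e_i,e_{i+2}$ of $G_\omega$, which lie at distance two and hence never occur together in an induced matching. Sorting each induced matching of $T_j$ into its trivial and non-trivial edges collapses the formula of Theorem \ref{reg} (or Lemma \ref{treetri} when $T_j$ is trivial) to $\reg(I(T_j))=1+\phi(T_j)$; combining the components through Lemma \ref{sum}(2) and using that induced matchings of a disjoint union split across components gives $\reg(J)=1+\sum_j\phi(T_j)=1+\phi(G')$.

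Finally, the matching comparison: if $M'$ is an induced matching of $G'$ with $\sum_{e\in M'}(2\omega(e)-1)=\phi(G')$, then $M'\cup\{xz_1\}$ is an induced matching of $G_\omega$, since $z_1$ is a leaf whose only neighbour $x$ lies outside $G'$ and every neighbour of $x$ is either deleted or isolated and so avoids $V(M')$; as $\omega(xz_1)=1$ this adds $1$ to the sum, giving $\phi(G_\omega)\ge\phi(G')+1$ and completing the chain. For the existence of $H_\omega$ one takes $x$ to be the neighbour, towards $P_\omega$, of a vertex realizing $d$: when $d\ge 2$ this $x$ lies off $P_\omega$, so all its edges are trivial, all its other neighbours are leaves by maximality of the distance, and its remaining neighbour $y$ has degree at least two (the caterpillar case $k\ge 5$ with a whisker is analogous, and a bare path falls under Corollary \ref{path}). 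The main obstacle is the identity $\reg(I(T_j))=1+\phi(T_j)$: one must check carefully that the three quantities $\nu(T_j)+1$, $2\omega_i+s_i-1$ and $2\omega_{i+2}+s_{i+2}-1$ appearing in Theorem \ref{reg} are precisely the contributions of induced matchings using, respectively, no non-trivial edge, the edge $e_i$, or the edge $e_{i+2}$, and that a possibly disconnected and non-normalized $G'$ still falls under that theorem component-by-component.
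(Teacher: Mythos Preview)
Your argument is correct and rests on the same combinatorial core as the paper's proof, but you package it more cleanly. The paper proceeds by tracking the three invariants $\nu$, $s_i$, $s_{i+2}$ separately: it shows each drops by exactly one when passing from $G_\omega$ to $G_\omega\setminus\{x,y\}$ (via the same ``add $xz_1$'' trick you use in your Step~3), and then invokes Theorem~\ref{reg} on \emph{both} sides to conclude. Your invariant $\phi(H_\omega)=\max_M\sum_{e\in M}(2\omega(e)-1)$ unifies those three quantities into one, and---more to the point---your Step~1 replaces the appeal to Theorem~\ref{reg} for $G_\omega$ by the elementary induced-subgraph lower bound of Lemma~\ref{indsubgraph}. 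That is a genuine simplification: you only need Theorem~\ref{reg} for the forest $G'$, and you handle the disconnectedness of $G'$ explicitly through Lemma~\ref{sum}, whereas the paper applies Theorem~\ref{reg} to $G'$ somewhat informally (Theorem~\ref{reg} is stated for connected trees). Your identification of $\reg(I(T_j))=1+\phi(T_j)$ as the crux is exactly right; once one observes that $e_i$ and $e_{i+2}$ are never simultaneously in an induced matching, the three branches of Theorem~\ref{reg} (and the trivial case of Lemma~\ref{treetri}) collapse to that single formula.

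One small remark on the existence paragraph: for a bare path with $k=5$, $d=0$, $\omega_2,\omega_4\ge 2$ there is in fact no trivial star $H_\omega$ with the required properties (every candidate center $x$ is either incident to a non-trivial edge or has no leaf neighbour), so the existence clause in the lemma is already delicate in the paper itself. Your deflection to Corollary~\ref{path} covers the inequality for bare paths but not the existence claim; since the lemma is only ever applied with $d\ge 1$ this does not affect the downstream theorems.
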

\begin{proof}	
Let  $\mathcal{A}$ be the collection of induced matchings of  $G_\omega$ with cardinality  $\nu(G_\omega)$. For any $M\in \mathcal{A}$, there exists an edge  $e$ which is taken from the set $\{xy,xz_1,\ldots,xz_p,\\
yw_1,\ldots,yw_q\}$. Thus  $M \backslash \{e\}$ is an induced matching of the graph $G_\omega \backslash \{x,y\}$. It follows that $\nu(G_\omega \backslash \{x,y\}) \ge \nu(G_\omega)-1$. Furthermore, we can see that $\nu(G_\omega \backslash \{x,y\})=\nu(G_\omega)-1$. Indeed, if $\nu(G_\omega \backslash \{x,y\})>\nu(G_\omega)-1$, then there exists an  induced matching $N$ of $G_\omega \backslash \{x,y\}$ such that $|N|>\nu(G_\omega)-1$.
 It is easy to check that $N \cup \{xz_1\}$ is an induced matching of  $G_\omega$. So $\nu(G_\omega)\ge |N|+1>\nu(G_\omega)$, a contradiction.

Similarly, we can show that $s_i(G_\omega \backslash \{x,y\})=s_i(G_\omega)-1$ if $e_i \in E(G_\omega \backslash \{x,y\})$ and $s_{i+2}(G_\omega \backslash \{x,y\})=s_{i+2}(G_\omega)-1$ if $e_{i+2} \in E(G_\omega \backslash \{x,y\})$. Therefore it follows from Theorem \ref{reg} that $\reg(J)=\reg(I(G_\omega))-1$ if $e_i, e_{i+2}\in E(G_\omega \backslash \{x,y\})$. On the other hand, if $e_i\notin E(G_\omega \backslash \{x,y\})$ or $ e_{i+2}\notin E(G_\omega \backslash \{x,y\})$,  then we can deduce that  $\reg(J) \le \reg(I(G_\omega))-1$ by  adopting the same technique.
\end{proof}

The following remark is often used in the following sections.
\begin{Remark}\label{notation}
Let $G_\omega$ be a non-trivial integrally closed tree as in Remark \ref{setup}, and let $P_\omega$ be its longest path  of length $(k-1)$ containing all non-trivial edges. If $k\ge 4$, then there exists an induced  trivial  star subgraph $H_\omega$. Let $V(H_\omega)=\{x,y\}\cup C$,  where $C=\{z_1\ldots,z_p\}$  with $p\ge 1$,  and $E(H_\omega)=\{xy,xz_1,\ldots,xz_p\}$ with $\deg_{G}(y)\ge 2$ and $\deg_{G}(z_i)=1$ for each $i\in [p]$. If $N_G(x)=\{y\}\cup C$ and $N_G(y)=\{x\} \cup D$, where $D=\{w_1,\ldots,w_q\}$ with $q\ge 1$.  Let $L_{1}=(I^t: xz_1), T_{1}=(I^t, xz_1), L_j=(T_{j-1}: xz_j), T_j=(T_{j-1}, xz_j)$ with $2 \le j \le p$, then,  by repeatedly applying Lemma \ref{colon}, we can conclude that
$L_{1}=I^{t-1}$, $T_{1}=(I(G_\omega \backslash z_1)^t,xz_1)$, $L_2=(T_{1}: xz_2)=(I(G_\omega \backslash z_1)^{t-1},z_1)$, $T_{2}=(T_{1},xz_2)=(I(G_\omega \backslash \{z_1,z_2\})^t,xz_1,xz_2)$,  and   $L_j=(T_{j-1}: xz_j)=((I(G_\omega \backslash\{z_1, \ldots, z_{j-1}\})^t, xz_1, \ldots, xz_{j-1}):xz_j)=(I(G_\omega \backslash\{z_1, \ldots, z_{j-1}\})^{t-1},z_1,\\
 \ldots, z_{j-1})$, $T_j=(T_{j-1},xz_j)=(I^t,xz_1,\ldots, xz_j)=(I(G_\omega \backslash\{z_1,\ldots,z_j\})^t,xz_1,\ldots, xz_j)$ for any $3\le j\le p$. At the same time, we also have
 $(T_p:x)=((I(G_\omega \backslash C)^t:x),z_1, \ldots, z_p)$, $(T_p,x)=(I(G_\omega \backslash x)^t,x)$, $((T_p:x):y)=(I(G_\omega \backslash C)^{t-1},z_1,\ldots, z_p)$ and $((T_p:x),y)=((I(G_\omega \backslash C)^t:x),z_1, \ldots, z_p,y)=(I(G_\omega \backslash\{x,y\})^t,z_1,\ldots,z_p,y)$. It follows from Lemma \ref{sum} that $\reg(L_1)=\reg(I^{t-1})$, $\reg(L_j)=\reg(I(G_\omega \backslash\{z_1, \ldots, z_{j-1}\})^{t-1})$ for any $2\le j\le p$, $\reg((T_p,x))=\reg(I(G_\omega \backslash x)^t)$, $\reg(((T_p:x):y))=\reg(I(G_\omega \backslash C)^{t-1})$ and
$\reg(((T_p:x),y))=
	\begin{cases}
		\reg((z_1, \ldots, z_p,y)),& \text{if $k=4,d=1$,}\\
		\reg(I(G_\omega \backslash\{x,y\})^t),& \text{otherwise}.\\
	\end{cases}$
\end{Remark}

Next, we compute the regularity of powers of the edge ideal of a non-trivial integrally closed  caterpillar graph under the condition that $k=4$, $d=1$  and  $\omega_1=\max\{\omega(e) \mid e \in E(P_\omega)\}$.
\begin{Theorem}\label{caterpillar2}
Let $G_\omega$ be a non-trivial integrally closed  caterpillar as in Remark \ref{setup},  and let $P_\omega$ be  its spine of length $(k-1)$ containing all non-trivial edges.  If  $k=4$, $d=1$  and  $\omega_1=\max\{\omega(e) \mid e \in E(P_\omega)\}$, then  $\reg(I(G_\omega)^t) =2\omega_1 t$ for all $t\ge 1$.
\end{Theorem}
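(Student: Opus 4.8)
Set $I=I(G_\omega)$ and induct on $t$. For the base case $t=1$ I would apply Theorem \ref{reg}. Integral closure together with $\omega_1=\max\{\omega(e)\mid e\in E(P_\omega)\}$ forces $\omega_2=1$ and $\omega_3\le\omega_1$, while $d=1$ forces every whisker to sit at $x_2$ or $x_3$ (a whisker at a spine end $x_1$ or $x_4$ would lengthen $P_\omega$, contradicting maximality). A direct inspection then gives $\nu(G_\omega)=1$ and $s_1(G_\omega)=s_3(G_\omega)=1$, since deleting $N_G(\{x_1,x_2\})$ or $N_G(\{x_3,x_4\})$ leaves an edgeless graph; hence Theorem \ref{reg}(2)--(3) yields $\reg(I)=\max\{2,2\omega_1,2\omega_3\}=2\omega_1$. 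For the matching lower bound at every $t$, I would use Lemma \ref{indsubgraph}: the subgraph induced on $\{x_1,x_2\}$ is the single heavy edge $e_1$, so $\reg(I^t)\ge\reg\big((x_1^{\omega_1}x_2^{\omega_1})^t\big)=2\omega_1 t$.

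The upper bound is the delicate part. A nested induction on $t$ and on the whisker number $\gamma=\ell+m$, in the style of Theorem \ref{caterpillar1}, runs into the following obstruction: here the heavy edge $e_1$ is incident to $x_2$ rather than to the spine vertex one naturally colons by. Peeling a whisker $y$ and coloning by the adjacent spine vertex produces, via Lemma \ref{colon}, the quotient $(I(G_\omega\backslash x_2)^t,x_2)$, and $G_\omega\backslash x_2$ is a weighted star that remains heavy of weight $\omega_1$ exactly when $\omega_3=\omega_1$. In that case the shifted colon module $\frac{S}{I^t:y}(-1)$ has regularity $2\omega_1 t$, and the equality clause of Lemma \ref{reglemma} degenerates, giving only $\reg(I^t)\le 2\omega_1 t+1$. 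To avoid this I would colon by the whisker \emph{edge} instead. Using $(I^t:x_2y)=I^{t-1}$ from Lemma \ref{colon}, the sequence
\[
0\longrightarrow\frac{S}{I^{t-1}}(-2)\xrightarrow{\ \cdot\,x_2y\ }\frac{S}{I^t}\longrightarrow\frac{S}{(I^t,x_2y)}\longrightarrow 0
\]
has a harmless colon term of regularity $2\omega_1(t-1)+1\le 2\omega_1 t-3$, so the upper bound reduces to showing $\reg\big((I^t,x_2y)\big)\le 2\omega_1 t$.

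To estimate $(I^t,x_2y)$ I would split it along the whisker edge. By Lemma \ref{colon} it equals $I(G_\omega\backslash y)^t+(x_2y)$, and $(x_2y)$ collects exactly the generators divisible by $y$ and has a linear resolution, so Lemma \ref{spliting} gives a Betti splitting with
\[
\reg\big((I^t,x_2y)\big)=\max\Big\{2,\ \reg\big(I(G_\omega\backslash y)^t\big),\ 1+\reg\big(I(G_\omega\backslash y)^t:x_2\big)\Big\},
\]
the last term arising from $(x_2y)\cap I(G_\omega\backslash y)^t=x_2y\cdot\big(I(G_\omega\backslash y)^t:x_2\big)$. The middle term equals $2\omega_1 t$ by the inner induction on $\gamma$ (and, when $\gamma=1$, because $G_\omega\backslash y=P_\omega$, for which \cite{ZDCL} and Corollary \ref{path} give $\reg(I(P_\omega)^t)=2\omega_1 t$). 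Thus everything comes down to the auxiliary inequality
\[
\reg\big(I(G_\omega\backslash y)^t:x_2\big)\le 2\omega_1 t-1,
\]
which is the main obstacle. It holds with equality when $t=1$, and I expect to prove it for all $t$ by a parallel induction tracking how the colon by $x_2$ consumes one factor of the heavy generator $x_1^{\omega_1}x_2^{\omega_1}$. Granting it, the Betti splitting gives $\reg\big((I^t,x_2y)\big)=2\omega_1 t$, the edge-colon sequence forces $\reg(I^t)\le 2\omega_1 t$, and together with the lower bound this yields $\reg(I(G_\omega)^t)=2\omega_1 t$.
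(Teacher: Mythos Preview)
Your base case, lower bound, and diagnosis of why the naive colon-by-vertex approach overshoots by~$1$ are all correct. The gap is exactly where you say it is: the auxiliary inequality
\[
\reg\big(I(G_\omega\backslash y)^t:x_2\big)\le 2\omega_1 t-1
\]
is the entire content of the theorem in the hard case, and you have not proved it. The phrase ``parallel induction tracking how the colon by $x_2$ consumes one factor of the heavy generator'' is not an argument: $x_2$ is incident to the heavy edge $e_1$, so none of the colon identities in Lemma~\ref{colon} apply to it, and there is no evident way to reduce $I(G_\omega\backslash y)^t:x_2$ to a previously known case. Your Betti-splitting route has simply moved the difficulty from $((I^t:y),x_2)$ to $I(G_\omega\backslash y)^t:x_2$, which is essentially the same object.

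The paper avoids this obstruction by never coloning by a vertex incident to a heavy edge. It splits into two cases. When $\omega_3=1$, every edge at $x_3$ is trivial, so Remark~\ref{notation} applies with $x=x_3$, $y=x_2$, and the standard exact sequences~(\ref{eqn: equality3}) go through directly. When $\omega_3\ge 2$ (your problematic case), the paper first \emph{adds} all whisker edges to reach $J=(I(P_\omega)^t,\,x_2y_1,\ldots,x_2y_\ell,\,x_3u_1,\ldots,x_3u_m)$, computes $\reg(J)=2\omega_1 t$ by coloning by powers of the trivial spine edge $x_2x_3$ via \cite[Lemma~4.11]{ZDCL}, and only then peels the whisker edges back off through chains of exact sequences. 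The colon terms arising in that peeling (the $F_j$ and $H_s$) are always taken with respect to leaves or trivial edges, so Lemma~\ref{colon} controls them; the $H_s$ terms need one further layer of short exact sequences, but again only along trivial whiskers. This detour through $J$ is precisely the ``missing idea'' your sketch lacks.
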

\begin{proof}
Let $I=I(G_{\omega})$. We  prove the statements by induction on $t$. The case where $t=1$  follows from Theorem \ref{reg}. So we can assume that $t \ge 2$. Let  $V(P_\omega)=\{x_1,x_2,x_3,x_4\}$, $N_G(x_2)=\{x_1,x_3\}\cup A$ and $N_G(x_3)=\{x_2,x_4\}\cup B$, where $A=\{y_1,\ldots,y_\ell\}$, $B=\{u_1,\ldots,u_m\}$, and $A=\emptyset$ if $\ell=0$, and $B=\emptyset$ if $m=0$ by convention, see Figure $2$ for the case $\omega_3\ge 2$.  Since $d=1$, we have $\ell+m\ge 1$.
Set $\gamma\!:=\ell+m$. Since $G_\omega$ is a non-trivially weighted graph and $\omega_1=\max\{\omega(e) \mid e \in E(P_\omega)\}$, we have
$\omega_1 \ge 2$ and  $\omega_2=1$  by  Lemma \ref{integral}. We distinguish between the following two cases:

(1) If $\omega_3=1$, then there exists an induced trivial star subgraph $H_\omega$ with $V(H_\omega)=\{x_3,x_2\}\cup C$, where $N_G(x_3)=\{x_2\}\cup C$ and $C=\{x_4\}\cup B$. We rearrange the elements in $C$ and assume that $C=\{z_1,\ldots,z_p\}$ with $z_1=x_4$ and $p=m+1$,  see Figure $2$ for the case $\omega_3=1$.

\begin{figure}[htbp]
	\begin{minipage}{0.48\textwidth}
		\centering
		\begin{tikzpicture}[thick, scale=0.8, every node/.style={scale=0.98}]]
			\draw[solid](0.5,0)--(5.5,0);
			\draw[solid](2,0)--(1.3,1);
			\draw[dotted](2,0)--(1.7,1);
			\draw[dotted](2,0)--(2.3,1);
			\draw[solid](2,0)--(2.7,1);
			\draw[solid](4,0)--(3.3,-1);
			\draw[dotted](4,0)--(3.7,-1);
			\draw[dotted](4,0)--(4.3,-1);
			\draw[solid] (4,0)--(4.7,-1);
			
			\draw[dotted](1.7,1.3)--(2.3,1.3);
			\draw[dotted](3.7,-1.3)--(4.3,-1.3);

			\shade [shading=ball, ball color=black] (0.5,0) circle (.07) node [below] {\scriptsize$x_1$};
			\shade [shading=ball, ball color=black] (2,0) circle (.07) node [below] {\scriptsize$x_2$};
			\shade [shading=ball, ball color=black] (4,0) circle (.07) node [above] {\scriptsize$x_3$};
			\shade [shading=ball, ball color=black] (5.5,0) circle (.07) node [below] {\scriptsize$x_4$};
			\shade [shading=ball, ball color=black] (1.3,1) circle (.07) node [above] {\scriptsize$y_1$};
			\shade [shading=ball, ball color=black] (2.7,1) circle (.07) node [above] {\scriptsize$y_\ell$};
			\shade [shading=ball, ball color=black] (3.3,-1) circle (.07) node [below] {\scriptsize$u_1$};
			\shade [shading=ball, ball color=black] (4.7,-1) circle (.07) node [below] {\scriptsize$u_m$};
		\end{tikzpicture}
	\subcaption*{The  case $\omega_3\ge 2$}
	\end{minipage}\hfill
	\begin{minipage}{0.48\textwidth}
		\centering
		\begin{tikzpicture}[thick, scale=0.8, every node/.style={scale=0.98}]]
			\draw[solid](0.5,0)--(5.5,0);
			\draw[solid](2,0)--(1.3,1);
			\draw[dotted](2,0)--(1.7,1);
			\draw[dotted](2,0)--(2.3,1);
			\draw[solid](2,0)--(2.7,1);
			\draw[solid](4,0)--(2.8,-1);
			\draw[dotted](4,0)--(3.7,-1);
			\draw[dotted](4,0)--(4.3,-1);
			\draw[solid] (4,0)--(5.2,-1);
			
			\draw[dotted](1.7,1.3)--(2.3,1.3);
			\draw[dotted](3.7,-1.3)--(4.3,-1.3);

			\shade [shading=ball, ball color=black] (0.5,0) circle (.07) node [below] {\scriptsize$x_1$};
			\shade [shading=ball, ball color=black] (2,0) circle (.07) node [below] {\scriptsize$x_2=y$};
			\shade [shading=ball, ball color=black] (4,0) circle (.07) node [above] {\scriptsize$x_3=x$};
			\shade [shading=ball, ball color=black] (5.5,0) circle (.07) node [below] {\scriptsize$x_4=z_1$};
			\shade [shading=ball, ball color=black] (1.3,1) circle (.07) node [above] {\scriptsize$y_1$};
			\shade [shading=ball, ball color=black] (2.7,1) circle (.07) node [above] {\scriptsize$y_\ell$};
			\shade [shading=ball, ball color=black] (2.8,-1) circle (.07) node [below] {\scriptsize$u_1=z_2$};
			\shade [shading=ball, ball color=black] (5.2,-1) circle (.07) node [below] {\scriptsize$u_m=z_p$};
		\end{tikzpicture}
	\subcaption*{The  case $\omega_3=1$}
	\end{minipage}\hfill
\caption{$Caterpillar\ graph\ with\  k=4,d=1,\omega_1\ge 2$}
\end{figure}
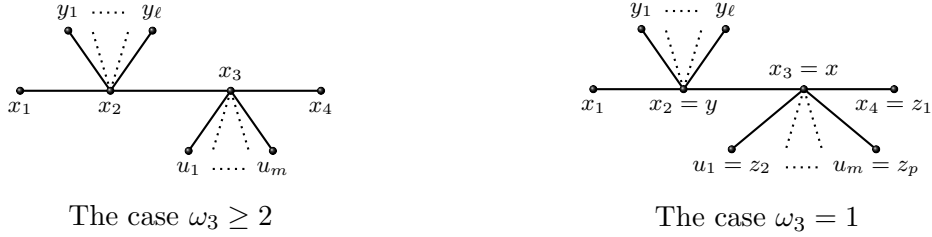
\hspace{-0.5cm}Adopting the notations in Remark \ref{notation} and replacing $x$ and $y$ by $x_3$ and $x_2$, respectively, we can obtain from Theorem \ref{reg}(2), \cite[Theorems 3.1 and 3.3]{ZDCL} and  the inductive hypothesis on $t$ that
$\reg(L_1)=\reg(I^{t-1})=2\omega_1 (t-1)$, $\reg(L_j)=\reg(I(G_\omega \backslash\{z_1, \ldots, z_{j-1}\})^{t-1})=2\omega_1 (t-1)$ for any $2\le j\le p$, $\reg((T_p,x))=\reg(I(G_\omega \backslash x)^t)=2\omega_1 t$, $\reg(((T_p:x):y))=\reg(I(G_\omega \backslash C)^{t-1})=2\omega_1 (t-1)$ and
$\reg(((T_p:x),y))=1$, since $k=4$ and  $d=1$. Thus, the expected results follow from Lemma \ref{reglemma} and  the following exact sequences
	\begin{gather}\label{eqn: equality3}
		\begin{matrix}
			0 &\rightarrow& \frac{S}{L_1}(-2) &\stackrel{\cdot xz_1}{\longrightarrow}& \frac{S}{I^t} &\rightarrow& \frac{S}{T_1} &\rightarrow& 0, \\
			0 &\rightarrow& \frac{S}{L_2}(-2) &\stackrel{\cdot xz_2}{\longrightarrow}& \frac{S}{T_1} &\rightarrow& \frac{S}{T_2} &\rightarrow& 0, \\
			&  &\vdots&  &\vdots&  &\vdots&  &\\
			0 &\rightarrow& \frac{S}{L_p}(-2) &\stackrel{\cdot xz_p}{\longrightarrow}& \frac{S}{T_{p-1}} &\rightarrow& \frac{S}{T_p} &\rightarrow& 0, \\
			0 &\rightarrow& \frac{S}{T_p:x}(-1) &\stackrel{\cdot x}{\longrightarrow}& \frac{S}{T_p} &\rightarrow& \frac{S}{(T_p,x)} &\rightarrow& 0, \\
			0 &\rightarrow& \frac{S}{(T_p:x):y}(-1) &\stackrel{\cdot y}{\longrightarrow}& \frac{S}{T_p:x} &\rightarrow& \frac{S}{((T_p:x),y)} &\rightarrow& 0. \\
		\end{matrix}
	\end{gather}

(2) Suppose $\omega_3\ge 2$. Let $J=(I^t,x_2y_1,\ldots,x_2y_\ell,x_3u_1,\ldots,x_3u_m)$, then $J=(I(G_\omega \backslash (A\cup B))^t,x_2y_1,\ldots,x_2y_\ell,x_3u_1,\ldots,x_3u_m)$, where $G_\omega \backslash (A\cup B)$ is the path  $P_\omega$ of length $3$. It follows from  \cite[Lemma 4.11]{ZDCL} that
\begin{align*}
		J:(x_2x_3)^{t-1}&=(I(G_\omega \backslash (A\cup B)),y_1,\ldots,y_\ell,u_1,\ldots,u_m),\\
		(J,x_2x_3)&=((x_1x_2)^{t\omega_1},x_2x_3,(x_3x_4)^{t\omega_3},x_2y_1,\ldots,x_2y_\ell,x_3u_1,\ldots,x_3u_m),\\
		((J:(x_2x_3)^i),x_2x_3)&=((x_1x_2)^{(t-i)\omega_1},x_2x_3,(x_3x_4)^{(t-i)\omega_3},y_1,\ldots,y_\ell,u_1,\ldots,u_m)
	\end{align*}
for any $i \in [t-2]$. Therefore, by Lemma \ref{sum},  Lemma \ref{reg} and \cite[Theorem 4.8]{ZDCL}, we can deduce that $\reg(J:(x_2x_3)^{t-1})=\reg(I(G_\omega \backslash (A\cup B)))=2\omega_1$ and $\reg(((J:(x_2x_3)^i),x_2x_3))=2\omega_1(t-i)$ for all $i=0,1,\ldots,t-2$. Therefore, $\reg(J)=2\omega_1 t$ by Lemma \ref{reglemma} and the following exact sequences
	\begin{gather}\label{eqn: equality4}
		\begin{matrix}
			0 &\rightarrow& \frac{S}{J:x_2x_3}(-2) &\stackrel{\cdot x_2x_3}{\longrightarrow}& \frac{S}{J} &\rightarrow& \frac{S}{(J,x_2x_3)} &\rightarrow& 0, \\
			0 &\rightarrow& \frac{S}{J:(x_2x_3)^2}(-2) &\stackrel{\cdot x_2x_3}{\longrightarrow}& \frac{S}{J:x_2x_3} &\rightarrow& \frac{S}{((J:x_2x_3),x_2x_3)} &\rightarrow& 0, \\
			&  &\vdots&  &\vdots&  &\vdots&  &\\
			0 &\rightarrow& \frac{S}{J:(x_2x_3)^{t-2}}(-2) &\stackrel{\cdot x_2x_3}{\longrightarrow}& \frac{S}{J:(x_2x_3)^{t-3}} &\rightarrow& \frac{S}{((J:(x_2x_3)^{t-3}),x_2x_3)} &\rightarrow& 0, \\
			0 &\rightarrow& \frac{S}{J:(x_2x_3)^{t-1}}(-2) &\stackrel{\cdot x_2x_3}{\longrightarrow}& \frac{S}{J:(x_2x_3)^{t-2}} &\rightarrow& \frac{S}{((J:(x_2x_3)^{t-2}),x_2x_3)} &\rightarrow& 0. \\
		\end{matrix}
	\end{gather}
In order to compute $\reg(I^t)$, we distinguish into the following three cases:

(i) If $\ell, m\ge 1$, then let $F_1=(I^t:x_3u_1),K_1=(I^t,x_3u_1),F_j=(K_{j-1}:x_3u_j),K_j=(K_{j-1},x_3u_j)$ for all $2\le j \le m$, $H_1=(K_m:x_2y_1),L_1=(K_m,x_2y_1), H_s=(L_{s-1},x_2y_s),L_s=(L_{s-1},x_2y_s)$ for all $2\le s\le \ell$, then by Lemma \ref{colon}, we have
 \begin{align*}
 F_1&=I^{t-1},\ \ \ K_1=(I^t,x_3u_1)=(I(G_\omega \backslash u_1)^t,x_3u_1),\\
 F_j&=(K_{j-1}:x_3u_j)=(I(G_\omega \backslash \{u_1,\ldots,u_{j-1}\})^{t-1},u_1,\ldots,u_{j-1}),\\
 K_j&=(K_{j-1},x_3u_j)=(I(G_\omega \backslash \{u_1,\ldots,u_j\})^t,x_3u_1,\ldots,x_3u_j),\\	
 H_1&=(K_m:x_2y_1)=(I(G_\omega \backslash \{u_1,\ldots,u_m\})^{t-1},x_3u_1,\ldots,x_3u_m),\\
L_1&=(F_m,x_2y_1)=(I(G_\omega \backslash \{u_1,\ldots,u_m,y_1\})^t,x_3u_1,\ldots,x_3u_m,x_2y_1),\\
 H_s&=(L_{s-1}:x_2y_s)=(I(G_\omega \backslash \{u_1,\ldots,u_m,y_1,\ldots,y_{s-1}\})^{t-1},x_3u_1,\ldots,x_3u_m,y_1,\ldots,y_{s-1}),\\
L_s&=(L_{s-1},x_2y_s)=(I(G_\omega \backslash \{u_1,\ldots,u_m,y_1,\ldots,y_s\})^t,x_3u_1,\ldots,x_3u_m,x_2y_1,\ldots,x_2y_s).
 \end{align*}
Thus we have  $\reg(F_j)=2\omega_1(t-1)$ for all $1\le j\le m$, and  $\reg(L_\ell)=\reg((I^t,x_3u_1,\ldots,\\
x_3u_m,x_2y_1,\ldots,x_2y_\ell))=2\omega_1 t$ by  similar arguments as in the proof of  the regularity of $J$ above.

Now, we compute  $\reg(H_1)$. For any $1\le j\le m-1$, by Lemma \ref{colon}, we have
\begin{align*}
H_1:u_1&=(I(G_\omega \backslash x_3)^{t-1},x_3),\\
(H_1,u_1,\ldots,u_j)&=(I(G_\omega \backslash \{u_1,\ldots,u_m\})^{t-1},u_1,\ldots,u_j,x_3u_{j+1},\ldots,x_3u_m),\\
(H_1,u_1,\ldots,u_j):u_{j+1}&=(I(G_\omega \backslash x_3)^{t-1},u_1,\ldots,u_j,x_3),\\
(H_1,u_1,\ldots,u_m)&=(I(G_\omega \backslash \{u_1,\ldots,u_m\})^{t-1},u_1,\ldots,u_m).
\end{align*}
It follows from Theorem \ref{reg} and the inductive hypothesis on $t$  that $\reg(H_1:u_1)=\reg(I(G_\omega \backslash x_3)^{t-1})=2\omega_1 (t-1)$, $\reg((H_1,u_1,\ldots,u_m))
=\reg(I(G_\omega \backslash \{u_1,\ldots,u_m\})^{t-1})=2\omega_1 (t-1)$, $\reg((H_1,u_1,\ldots,u_j):u_{j+1})=\reg(I(G_\omega \backslash x_3)^{t-1})=2\omega_1 (t-1)$. Thus, by Lemma \ref{reglemma} and the following exact sequences
	\begin{gather}\label{eqn: equality5}
		\begin{matrix}
			0 &\rightarrow& \frac{S}{H_1:u_1}(-1) &\stackrel{\cdot u_1}{\longrightarrow}& \frac{S}{H_1} &\rightarrow& \frac{S}{(H_1,u_1)} &\rightarrow& 0, \\
			0 &\rightarrow& \frac{S}{(H_1,u_1):u_2}(-1) &\stackrel{\cdot u_2}{\longrightarrow}& \frac{S}{(H_1,u_1)} &\rightarrow& \frac{S}{(H_1,u_1,u_2)} &\rightarrow& 0, \\
			&  &\vdots&  &\vdots&  &\vdots&  &\\
			0 &\rightarrow& \frac{S}{(H_1,u_1,\ldots,u_{m-2}):u_{m-1}}(-1) &\stackrel{\cdot u_{m-1}}{\longrightarrow}& \frac{S}{(H_1,u_1,\ldots,u_{m-2})} &\rightarrow& \frac{S}{(H_1,u_1,\ldots,u_{m-1})} &\rightarrow& 0, \\
			0 &\rightarrow& \frac{S}{(H_1,u_1,\ldots,u_{m-1}):u_m}(-1) &\stackrel{\cdot u_m}{\longrightarrow}& \frac{S}{(H_1,u_1,\ldots,u_{m-1})} &\rightarrow& \frac{S}{(H_1,u_1,\ldots,u_m)} &\rightarrow& 0, \\
		\end{matrix}
	\end{gather}
	we obtain   $\reg(H_1) \le 2\omega_1 (t-1)+1$. Similarly, we also can  deduce that  $\reg(H_s) \le 2\omega_1 (t-1)+1$ for any $1\le s\le \ell$. Hence, by Lemma \ref{reglemma} and the following  exact sequences
\begin{gather}\label{eqn: equality6}
		\begin{matrix}
			0 &\rightarrow& \frac{S}{F_1}(-2) &\stackrel{\cdot x_3u_1}{\longrightarrow}& \frac{S}{I^t} &\rightarrow& \frac{S}{K_1} &\rightarrow& 0, \\
			0 &\rightarrow& \frac{S}{F_2}(-2) &\stackrel{\cdot x_3u_2}{\longrightarrow}& \frac{S}{K_1} &\rightarrow& \frac{S}{K_2} &\rightarrow& 0, \\
			&  &\vdots&  &\vdots&  &\vdots&  &\\
			0 &\rightarrow& \frac{S}{F_{m-1}}(-2) &\stackrel{\cdot x_3u_{m-1}}{\longrightarrow}& \frac{S}{K_{m-2}} &\rightarrow& \frac{S}{K_{m-1}} &\rightarrow& 0, \\
			0 &\rightarrow& \frac{S}{F_m}(-2) &\stackrel{\cdot x_3u_m}{\longrightarrow}& \frac{S}{K_{m-1}} &\rightarrow& \frac{S}{K_m} &\rightarrow& 0,
	\end{matrix}
	\end{gather}
and
\begin{gather}\label{eqn: equality7}
		\begin{matrix}
			0 &\rightarrow& \frac{S}{H_1}(-2) &\stackrel{\cdot x_2y_1}{\longrightarrow}& \frac{S}{K_m} &\rightarrow& \frac{S}{L_1} &\rightarrow& 0, \\
			0 &\rightarrow& \frac{S}{H_2}(-2) &\stackrel{\cdot x_2y_2}{\longrightarrow}& \frac{S}{L_1} &\rightarrow& \frac{S}{L_2} &\rightarrow& 0, \\
			&  &\vdots&  &\vdots&  &\vdots&  &\\
			0 &\rightarrow& \frac{S}{H_{\ell-1}}(-2) &\stackrel{\cdot x_3y_{\ell-1}}{\longrightarrow}& \frac{S}{L_{\ell-2}} &\rightarrow& \frac{S}{L_{\ell-1}} &\rightarrow& 0, \\
			0 &\rightarrow& \frac{S}{H_\ell}(-2) &\stackrel{\cdot x_3y_\ell}{\longrightarrow}& \frac{S}{L_{\ell-1}} &\rightarrow& \frac{S}{L_\ell} &\rightarrow& 0, \\
		\end{matrix}
	\end{gather}
 we can obtain $\reg(I^t) = 2\omega_1 t$.

(ii) If $\ell=0$, then  $\reg(K_m)=\reg((I^t,x_3u_1,\ldots, x_3u_m))=2\omega_1 t$ and $\reg(F_j)=2\omega_1(t-1)$ for all $1\le j\le m$ as shown in the proof above. It follows from  by Lemma \ref{reglemma} and the   exact sequences (\ref{eqn: equality6}) above that $\reg(I^t) = 2\omega_1 t$.

(iii) If $m=0$, then  $\reg(L_\ell)=\reg((I^t,x_2y_1,\ldots, x_2y_\ell))=2\omega_1 t$ and  $\reg(H_j)=2\omega_1 (t-1)$ for all $1\le j\le \ell$ as shown in the proof above. It follows from  by Lemma \ref{reglemma} and the  exact sequences (\ref{eqn: equality7})above  that $\reg(I^t) = 2\omega_1 t$.
\end{proof}

In the following, we compute the regularity of powers of the edge ideal of a non-trivial integrally closed  caterpillar graph under the condition that $k=5$, or $k=6$ and $i=2$, and  $d=1$.
\begin{Theorem}\label{k=5,k=6}
Assuming $d=1$, let $G_\omega$ be a non-trivial integrally closed  caterpillar, as described in Remark \ref{setup},  and let $P_\omega$ be  the longest path of length $(k-1)$ containing all non-trivial edges. Also, assume that    $\omega_i=\max\{\omega(e) \mid e \in E(P_\omega)\}$ with $\omega_i\ge 2$ and $\omega_i\ge \omega_{i+2}$ if $e_{i+2}\in E({P_{\omega}})$. If $k=5$, or $k=6$ and $i=2$, then $\reg(I(G_\omega)^t))\le 2\omega_i(t-1)+\reg(I(G_\omega))$ for all $t\ge 1$.
\end{Theorem}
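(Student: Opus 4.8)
The plan is to induct on $t$, the case $t=1$ being the trivial equality $\reg(I(G_\omega))=2\omega_i\cdot 0+\reg(I(G_\omega))$. Fix $t\ge 2$, assume the bound for $t-1$, and run a secondary downward induction on the number of vertices $|V(G_\omega)|$; its base cases are the path case of Corollary \ref{path}, the cases $k\le 4$ (Theorems \ref{caterpillar1} and \ref{caterpillar2} when $d=1$, and \cite[Theorems 3.1 and 3.3]{ZDCL} when $k\le 3$), and the trivially weighted case of Lemma \ref{treetri}. Throughout I will use that $\reg(I(G'))\le\reg(I(G_\omega))$ for every induced subgraph $G'$, which is the $t=1$ instance of Lemma \ref{indsubgraph}.

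Because $d=1$ and $G_\omega$ is a non-trivial integrally closed caterpillar, all non-trivial edges lie on the spine $P_\omega$, every whisker edge is trivial, and the spine endpoints $x_1,x_k$ carry no whiskers (a whisker at an endpoint would extend $P_\omega$ to a longer path still containing every non-trivial edge, contradicting maximality). First I would fix a leaf $x$ joined to its neighbour $y$ by a trivial edge — a whisker, which exists since $d=1$ — and build the chain of short exact sequences of Remark \ref{notation} and \eqref{eqn: equality3}, peeling $x$ together with the other leaves $z_1,\dots,z_p$ of the trivial star it centres, and then peeling $y$. In the subcase where two non-trivial spine edges are adjacent to the vertices being removed, I would instead first pass to $J=(I^t,\text{whisker edges})$ and strip the non-trivial core by the powers $(x_ix_{i+1})^{j}$ exactly as in \eqref{eqn: equality4}, reducing to the path regularity of Corollary \ref{path}.

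Applying Lemma \ref{reglemma} repeatedly along the chain writes $\reg(I(G_\omega)^t)$ as a maximum of shifted regularities of the colon and quotient modules of Remark \ref{notation}. By Lemmas \ref{colon} and \ref{sum} each of these has regularity of one of the forms $\reg(I(G_\omega)^{t-1})$, $\reg(I(G')^{t})$ or $\reg(I(G')^{t-1})$, where $G'$ is obtained by deleting $x$, $y$, or the star leaves (possibly a disjoint union of a smaller non-trivial caterpillar with a trivial star, split termwise by Lemma \ref{sum}). The term $\reg(I(G_\omega)^{t-1})$ is handled by the induction on $t$; each $\reg(I(G')^{s})$ is handled by the secondary induction on $|V(G_\omega)|$ applied to $G'$, whose maximal weight $\omega_i'\le\omega_i$, giving $\reg(I(G')^{s})\le 2\omega_i'(s-1)+\reg(I(G'))\le 2\omega_i(s-1)+\reg(I(G_\omega))$. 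Since $\omega_i\ge 2$, the accumulated shifts ($+1$ and $+2$ coming from the degree-one and degree-two peels) are absorbed, so every candidate term is at most $2\omega_i(t-1)+\reg(I(G_\omega))$, which is the asserted inequality.

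I expect the main obstacle to be that deleting a spine endpoint need not shorten $P_\omega$: a whisker at $x_2$ or $x_{k-1}$ may be promoted to a new endpoint, so the reduced graph can again have $k=5$ (resp.\ $k=6$ with $i=2$), only with one fewer vertex. This forces a case analysis by the number of non-trivial edges (one or two) and by the positions of the whiskers, and—to match the constant $\reg(I(G_\omega))$ exactly rather than losing a unit—a careful bookkeeping of the induced matching numbers through Lemmas \ref{G3}, \ref{matching} and \ref{number} together with the closed form of Theorem \ref{reg} on each piece. Checking in every subcase that the promoted or split graph still lies in the inductive class (or in an already-settled base case) and that its weight and matching data do not inflate the bound is where the real work lies.
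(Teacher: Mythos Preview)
Your inductive scaffold (on $t$ and on a size parameter) matches the paper's, but the peeling mechanism you propose is different and runs into a concrete obstruction. The paper does \emph{not} use the trivial-star machinery of Remark~\ref{notation} and the sequences~\eqref{eqn: equality3} here. Instead, in each of the three cases ($k=5,\,i=1$; $k=5,\,i=2$; $k=6,\,i=2$) it peels a \emph{spine endpoint} $x_u\in\{x_1,x_5,x_6\}$ via the two-line sequences~\eqref{eqn: equality8}: the colon $(I^t:x_u):x_v$ (with $x_v$ the neighbouring spine vertex) collapses to $I^{t-1}$, the quotient $((I^t:x_u),x_v)$ to $I(G_\omega\setminus x_v)^t$, and $(I^t,x_u)$ to $I(G_\omega\setminus x_u)^t$, a caterpillar with one fewer vertex handled either by Theorems~\ref{caterpillar1}--\ref{caterpillar2}, by \cite[Theorems 3.1, 3.3, 4.16]{ZDCL}, or by the inner induction on the number $q$ of whiskers.

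The gap in your plan is this: when both $e_i$ and $e_{i+2}$ are non-trivial (e.g.\ $k=5,\,i=1,\,\omega_3\ge2$, or $k=5,\,i=2,\,\omega_4\ge2$, or $k=6,\,i=2,\,\omega_4\ge2$) there is \emph{no} trivial star in the sense of Remark~\ref{notation}. The only candidates for the centre $x$ are $x_2$ or $x_{k-1}$ (the spine vertices with a unique non-leaf neighbour), and for each of them either the edge $xy$ or one of the edges $xz_j$ is forced to be a non-trivial spine edge; then the star is not trivial and Lemma~\ref{colon}, whose hypothesis is $\omega(xy)=1$, cannot be invoked to compute $(T_p:x):y$. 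Your fallback to~\eqref{eqn: equality4} is also not ``exactly'' transferable: that computation is specific to $k=4$, where the spine modulo whiskers has a \emph{single} trivial edge $e_2$ separating the two non-trivial ones. For $k=5$ the extra trivial edge $e_4=x_4x_5$ survives in $J:(x_2x_3)^j$ and $(J,x_2x_3)$, and those ideals no longer collapse to the clean shapes used in Theorem~\ref{caterpillar2}. To make your route work you would in effect first have to strip the endpoint $x_k$ via~\eqref{eqn: equality8}---which is precisely the step the paper takes from the outset.
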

\begin{proof}
Let $I=I(G_\omega)$ and $V(G_\omega)=V(P_\omega)\cup \{y_1,\ldots,y_q\}$, where $V(P_\omega)=\{x_1,\ldots,x_k\}$, then it follows from Theorem \ref{reg} that
\[
\reg(I)=\begin{cases}
		2\omega_1+1,& \text{if $k=5,i=1$,}\\
		2\omega_2,& \text{if $k=5,i=2$ and $\omega_2>\omega_4$,}\\
	2\omega_2+1,& \text{if $k=5,i=2$ and $\omega_2=\omega_4$, or $k=6,i=2$.}
\end{cases}
\]
We prove the statements by induction on $t$. The case where $t = 1$ is trivial. Thus, we may assume that $t\ge 2$. If $k=5$, then $i=1$ or $i=2$ due to symmetry.
 We consider the following three cases:

 (1) If $k=5$ and $i=1$, then, by Lemmas \ref{sum} and  \ref{colon}, \cite[Theorems 3.1 and 3.3]{ZDCL} and  the inductive hypothesis on $t$, we obtain that  $\reg(((I^t:x_5):x_{4}))=\reg(I^{t-1})\le 2\omega_1(t-1)+1$ and  $\reg(((I^t:x_5),x_{4}))=\reg((I(G_\omega \backslash x_{4})^t,x_{4}))=\reg(I(G_\omega \backslash x_{4})^t)=2\omega_1t$.

The following two subcases are being considered. (i) If $q=1$, then,
by some simple calculations, we obtain
 \[
 \reg((I^t,x_5))=\reg(I(G_\omega \backslash x_5)^t)=\begin{cases}
 	2\omega_1t+1,& \text{if $N_{G}(y_1)=\{x_4\}$,}\\
 	2\omega_1t,& \text{ otherwise.}\\
 \end{cases}
 \]
Replacing $x_5$ and $x_4$ for $x_u$ and $x_v$ respectively  in the following short exact sequences,
\begin{gather}\label{eqn: equality8}
		\begin{matrix}
			0 &\rightarrow& \frac{S}{I^t:x_u}(-1) &\stackrel{\cdot x_u}{\longrightarrow}& \frac{S}{I^t} &\rightarrow& \frac{S}{(I^t,x_u)} &\rightarrow& 0, \\
			0 &\rightarrow& \frac{S}{(I^t:x_u):x_{v}}(-1) &\stackrel{\cdot x_{v}}{\longrightarrow}& \frac{S}{I^t:x_u} &\rightarrow& \frac{S}{((I^t:x_u),x_{v})} &\rightarrow& 0, \\
		\end{matrix}
	\end{gather}
we obtain from  Lemma \ref{reglemma} that  $\reg(I^t)\le  2\omega_1t+1$.

(ii) If $q\ge 2$, then, by Lemmas \ref{sum}, \ref{colon}, \ref{indsubgraph} and \ref{caterpillar2} and the inductive hypothesis on $q$, we get  $\reg((I^t,x_5))=\reg(I(G_\omega \backslash x_5)^t)\le 2\omega_1(t-1)+\reg(I(G_\omega \backslash x_5))\le 2\omega_1t+1$.
Again using the  exact sequences (\ref{eqn: equality8}), replacing $x_5$ and $x_4$ for $x_u$ and $x_v$  respectively, we  have  $\reg(I^t)\le  2\omega_1t+1$.

 (2) If $k=5$ and $i=2$, it is sufficient to assume  $\omega_2>\omega_4$, since the case where $\omega_2=\omega_4$ follows from  case (1) above.
  In this case,   by Lemmas \ref{sum} and \ref{colon}, \cite[Theorems 3.1 and 3.3]{ZDCL} and the inductive hypothesis on $t$, we can deduce that  $\reg(((I^t:x_1):x_{2}))=\reg(I^{t-1})\le 2\omega_2(t-1)$, $\reg(((I^t:x_1),x_{2}))=\reg((I(G_\omega \backslash x_{2})^t,x_{2}))=\reg(I(G_\omega \backslash x_{2})^t)=2\omega_4t$.

If $q=1$, then $\reg((I^t,x_1))=\reg(I(G_\omega \backslash x_1)^t)=2\omega_2t$ by Lemmas \ref{sum}, \ref{colon} and \ref{caterpillar2}, and \cite[Theorem 4.16]{ZDCL}. Therefore,  replacing $x_5$ and $x_4$ for $x_u$ and $x_v$,  respectively,   in the  exact sequences (\ref{eqn: equality8}),  it can be concluded that  $\reg(I^t)\le 2\omega_2t$.
If $q\ge 2$, then, $\reg((I^t,x_1))=\reg(I(G_\omega \backslash x_1)^t)\le 2\omega_2(t-1)+\reg(I(G_\omega \backslash x_1))=2\omega_2t$ by Lemmas \ref{sum} and \ref{colon}, Theorem \ref{caterpillar2}, and the inductive hypothesis on $q$. Substituting $x_u$ and $x_v$ for $x_1$ and $x_2$ in the  exact sequences (\ref{eqn: equality8}), we can deduce
 $\reg(I^t) \le  2\omega_2 t$.

 (3) If $k=6$ and $i=2$,  it follows from Lemmas \ref{sum} and \ref{colon}, Theorem \ref{caterpillar1},  case (2) above, and the inductive hypothesis on $t$ that  $\reg(((I^t:x_6):x_{5}))=\reg(I^{t-1})\le 2\omega_2(t-1)+1$ and $\reg(((I^t:x_6),x_{5}))=\reg(I(G_\omega \backslash x_{5})^t)\le 2\omega_2t$.

If $q=1$, then $\reg((I^t,x_6)) =\reg(I(G_\omega \backslash x_6)^t)\le 2\omega_2t+1$ by Lemmas \ref{sum}, \ref{colon} and \ref{indsubgraph}, and \cite[Theorem 4.16]{ZDCL} and  cases (1) and  (2) above.
  Therefore, by replacing $x_u$ and $x_v$ for $x_6$ and $x_5$ in the  exact sequences (\ref{eqn: equality8}),  we  can obtain  $\reg(I^t)\le 2\omega_2t+1$.
If $q\ge 2$,  it follows from Lemmas \ref{sum},  \ref{colon} and \ref{indsubgraph},   cases (1) and (2) above, and the inductive hypothesis on $q$ that $\reg((I^t,x_6))=\reg(I(G_\omega \backslash x_6)^t)\le 2\omega_2(t-1)+\reg(I(G_\omega \backslash x_6))\le 2\omega_2t+1$.
  Again, substituting $x_u$ and $x_v$ for $x_6$ and $x_5$ in the  exact sequences (\ref{eqn: equality8}),  we can conclude that $\reg(I^t) \le  2\omega_2 t+1$.
This completes  the proof.
\end{proof}

Next, we prove a main result of this section.
\begin{Theorem}\label{tree,d=1}
	Assuming $d=1$, let $G_\omega$ be a non-trivial integrally closed  caterpillar, as described in Remark \ref{setup},  and let $P_\omega$ be  the longest  spine of length $(k-1)$ containing all non-trivial edges. Also, assume that    $\omega_i=\max\{\omega(e) \mid e \in E(P_\omega)\}$ with $\omega_i\ge 2$ and $\omega_i\ge \omega_{i+2}$ if $e_{i+2}\in E({P_{\omega}})$.  Then $\reg(I(G_\omega)^t)\le 2\omega_i(t-1)+\reg(I(G_\omega))$ for all $t\ge 1$.
\end{Theorem}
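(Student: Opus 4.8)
The plan is to argue by induction on $t$, and for each fixed $t$ by a secondary induction on the number of vertices $n=|V(G_\omega)|$. Since $d=1$ the graph is a caterpillar whose spine is $P_\omega$, and deleting a vertex keeps us inside the class of integrally closed caterpillars with $d\le 1$ (or lands us on an integrally closed path, treated in \cite{ZDCL}, or on a trivially weighted tree, treated in Lemma~\ref{treetri}). The case $t=1$ is the identity $\reg(I)\le\reg(I)$, and the smallest genuine caterpillars are settled by Theorems~\ref{caterpillar1}, \ref{caterpillar2} and \ref{k=5,k=6}; these, together with the degenerate path and trivial cases, serve as base cases. Throughout I write $I=I(G_\omega)$ and read $\reg(I)$ off Theorem~\ref{reg}; the goal is to show that $2\omega_i(t-1)+\reg(I)$ dominates every regularity produced by the exact sequences below.

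For the inductive step I first select a spine endpoint whose incident edge is trivial. By Corollary~\ref{integ} the non-trivial edges are confined to $e_i$ and possibly $e_{i+2}$, so $e_1$ is trivial whenever $i\ge 2$, while $e_{k-1}$ is trivial whenever $i=1$ (here $k\ge 5$, so $k-1\ge 4\notin\{1,3\}$); thus at least one endpoint $x_u\in\{x_1,x_k\}$ is incident to a trivial edge. Because $P_\omega$ is a longest path and $d=1$, this endpoint carries no whisker and is a leaf of $G$, so Lemma~\ref{colon} applies to it. With $x_v$ its spine neighbour, I run the two short exact sequences (\ref{eqn: equality8}) and identify, via Lemma~\ref{colon} and Remark~\ref{notation},
\[
(I^t:x_u):x_v=I^{t-1},\quad \bigl((I^t:x_u),x_v\bigr)=(I(G_\omega\setminus x_v)^t,x_v),\quad (I^t,x_u)=(I(G_\omega\setminus x_u)^t,x_u).
\]

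It remains to bound these three modules and feed them to Lemma~\ref{reglemma}. The colon--colon term is $\reg(I^{t-1})\le 2\omega_i(t-2)+\reg(I)$ by the inductive hypothesis on $t$; the two degree shifts along the chain add at most $2$, and since $\omega_i\ge 2$ we have $2\omega_i(t-2)+\reg(I)+2\le 2\omega_i(t-1)+\reg(I)$. For the two quotient terms, adjoining the fresh variable $x_u$ (resp.\ $x_v$) does not change the regularity, so they equal $\reg(I(G_\omega\setminus x_u)^t)$ and $\reg(I(G_\omega\setminus x_v)^t)$. Each deleted graph has $n-1$ vertices and still lies in the class (or degenerates), and its largest edge-weight is at most $\omega_i$ in every case: it is unchanged when we delete the far endpoint $x_u$, and can only decrease when we delete $x_v$. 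Hence the inductive hypothesis on $n$ gives $\reg(I(G_\omega\setminus x)^t)\le 2\omega_i(t-1)+\reg(I(G_\omega\setminus x))$, while Lemma~\ref{indsubgraph} at $t=1$ gives $\reg(I(G_\omega\setminus x))\le\reg(I)$. Thus every term is at most $2\omega_i(t-1)+\reg(I)$, and Lemma~\ref{reglemma} yields the claim.

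The main obstacle is the case analysis hidden in the quotient terms. Deleting the endpoint $x_u$ need not shorten the spine---if its neighbour carries whiskers, one of those whiskers becomes the new endpoint---so the induction must be run on $n$ rather than on $k$, and one must check that $G_\omega\setminus x_u$ and $G_\omega\setminus x_v$ genuinely remain integrally closed caterpillars with $d\le 1$ and the maximal-weight edge intact, branching to Lemma~\ref{treetri} or to \cite{ZDCL} precisely when the smaller graph becomes trivially weighted or a path. Equally delicate is verifying $\reg(I(G_\omega\setminus x))\le\reg(I)$ across every branch of the piecewise formula of Theorem~\ref{reg}: Lemma~\ref{indsubgraph} handles most branches, whereas the configurations in which deleting $x_v$ erases the edge $e_i$ (so that $s_i$ or $\nu$ drops) are exactly where Lemma~\ref{deletepoint} is needed to certify that the regularity falls by the right amount. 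Confirming that none of the $+1$ shifts accumulated along the chain of sequences ever pushes a term past $2\omega_i(t-1)+\reg(I)$ is the final bookkeeping the argument must carry out.
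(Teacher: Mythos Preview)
Your argument has a genuine gap in the shift bookkeeping. When you combine the two exact sequences (\ref{eqn: equality8}), the term $((I^t:x_u),x_v)$ sits in the middle module of the first sequence via the second, so it picks up a $+1$:
\[
\reg(I^t)\le\max\{\reg(I^{t-1})+2,\ \reg(I(G_\omega\setminus x_v)^t)+1,\ \reg(I(G_\omega\setminus x_u)^t)\}.
\]
Thus you do not merely need $\reg(I(G_\omega\setminus x_v))\le\reg(I)$; you need $\reg(I(G_\omega\setminus x_v))\le\reg(I)-1$. Neither Lemma~\ref{indsubgraph} nor Lemma~\ref{deletepoint} gives this: the former only yields $\le$, and the latter concerns $G_\omega\setminus\{x,y\}$, not a single-vertex deletion. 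In fact the inequality is false. Take $k=7$, spine $x_1,\dots,x_7$ with $\omega_1=2$ and all other weights $1$, and a single whisker at $x_5$. Here $i=1$, so you choose $x_u=x_7$, $x_v=x_6$. One computes $s_1(G_\omega)=s_1(G_\omega\setminus x_6)=2$ (the matching $\{e_1,e_4\}$ survives the deletion), so by Theorem~\ref{reg} both $\reg(I(G_\omega))$ and $\reg(I(G_\omega\setminus x_6))$ equal $5$. Your inductive bound then gives $\reg(I(G_\omega\setminus x_6)^t)+1\le 4(t-1)+6$, which exceeds the target $4(t-1)+5$.

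The paper's proof avoids this by choosing $x=x_{k-1}$ and $y=x_{k-2}$ (one step further in) and running the longer chain of exact sequences in Remark~\ref{notation}. The term that inherits the $+1$ is then $((T_p:x),y)=I(G_\omega\setminus\{x_{k-1},x_{k-2}\})^t$, and for this \emph{two-vertex} deletion Lemma~\ref{deletepoint} supplies exactly the needed $\reg\le\reg(I)-1$. The term $(T_p,x)=I(G_\omega\setminus x_{k-1})^t$, which in your example has regularity equal to $\reg(I)$, carries no $+1$ in the paper's arrangement, so it causes no trouble. Your single-vertex colon at the leaf endpoint is too coarse to separate these two roles.
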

\begin{proof}
Let $I=I(G_\omega)$ and   $V(P_\omega)=\{x_1,\ldots,x_k\}$. We prove the statements by induction on $t$. The case where $t = 1$ is trivial. So we can assume that $t\ge 2$. By Remark \ref{notation}, there exists an induced trivial star subgraph $H_\omega$ with $V(H_\omega)=\{x,y,z_1\ldots,z_p\}$, where $p\ge 1$ and $N_G(x)=\{y,z_1\ldots,z_p\}$, and $E(H_\omega)=\{xy,xz_1,\ldots,xz_p\}$ with $\deg_{G}(y)\ge 2$ and $\deg_{G}(z_i)=1$ for all $i\in [p]$.  We distinguish between the following two cases:

 (1) If $i=1$. We prove the statements by induction on $k$, and the case where  $k\le 5$ is verified  separately by  Theorem \ref{caterpillar2}, Theorem \ref{k=5,k=6} and \cite[Theorems 3.1 and 3.3]{ZDCL}. Now we can assume that  $k\ge 6$. Adopting the notations in Remark \ref{notation}, we choose $x=x_{k-1}$, $y=x_{k-2}$, $L_{1}=(I^t: xz_1)$, $T_{1}=(I^t, xz_1)$, $L_j=(T_{j-1}: xz_j)$, $T_j=(T_{j-1}, xz_j)$ for all $2 \le j \le p$, then, by Lemma \ref{indsubgraph}, Remark \ref{notation} and  the inductive hypothesis on  $t$, we can conclude that  $\reg(L_1)\le 2\omega_1(t-2)+\reg(I)$, $\reg(((T_p:x):y))\le 2\omega_1(t-2)+\reg(I)$, $\reg(L_j)\le 2\omega_1(t-2)+\reg(I)$ for all $2\le j\le p$. Meanwhile, by  Lemma \ref{deletepoint} and the inductive hypothesis on $k$, we can get $\reg((T_p,x))=\reg(I(G_\omega \backslash x)^t)\le 2\omega_1(t-1)+\reg(I),\reg(((T_p:x),y))=\reg(I(G_\omega \backslash\{x,y\})^t)\le 2\omega_1(t-1)+\reg(I(G_\omega \backslash\{x,y\})) \le 2\omega_1(t-1)+\reg(I)-1$.
 Therefore, we obtain from that Lemma \ref{reglemma} and the short exact sequences (\ref{eqn: equality3}) that $\reg(I^t)\le 2\omega_1(t-1)+\reg(I)$.

 (2) If $i=2$ and  $k\le 6$, then the desired results follow from \cite[Theorems 3.1 and 3.3]{ZDCL}, Theorems \ref{caterpillar1} and \ref{k=5,k=6}. If $i=2$ and  $k\ge 7$, we choose $x=x_{k-1},y=x_{k-2}$, see  Figure $3$  for the  case $\omega_1\ge 2$.  Similarly,  if $i\ge 3$, we choose $x=x_2,y=x_3$, see  Figure $3$ for the  case $\omega_3\ge 2$. In these two cases, the desired results can be shown by similar arguments as in
case (1) above, so we omit its proof.
\end{proof}

\begin{figure}[htbp]
	\begin{minipage}{0.48\textwidth}
		\centering
		\begin{tikzpicture}[thick, scale=0.8, every node/.style={scale=0.98}]]
			\draw[solid](0.5,0)--(8.5,0);
			\draw[solid](1.5,0)--(1.2,1);
			\draw[dotted] (1.5,0)--(1.8,1);
			\draw[dotted] (3,0)--(2.7,1);
			\draw[dotted] (3,0)--(3.3,1);
			\draw[dotted] (4.5,0)--(4.2,1);
			\draw[dotted] (4.5,0)--(4.8,1);
			\draw[dotted] (6,0)--(5.7,1);
			\draw[dotted] (6,0)--(6.3,1);
			\draw[dotted] (7.5,0)--(7.2,1);
			\draw[solid](7.5,0)--(7.8,1);
			
			\draw[dotted] (4,1.3)--(5,1.3);

		\shade [shading=ball, ball color=black] (0.5,0) circle (.07) node [below] {\scriptsize$x_1$};
		\shade [shading=ball, ball color=black] (1.5,0) circle (.07) node [below] {\scriptsize$x_2$};
		\shade [shading=ball, ball color=black] (3,0) circle (.07) node [below] {\scriptsize$x_3$};
		\shade [shading=ball, ball color=black] (4.5,0) circle (.07) node [below] {\scriptsize$x_4$};
		\shade [shading=ball, ball color=black] (6,0) circle (.07) node [below] {\scriptsize$x_5=y$};
		\shade [shading=ball, ball color=black] (7.5,0) circle (.07) node [below] {\scriptsize$x_6=x$};
		\shade [shading=ball, ball color=black] (8.5,0) circle (.07) node [above] {\scriptsize$x_7=z_1$};
		\shade [shading=ball, ball color=black] (1.2,1) circle (.07) node [above] {\scriptsize$y_1$};
		\shade [shading=ball, ball color=black] (7.8,1) circle (.07) node [above] {\scriptsize$y_q=z_p$};
		\end{tikzpicture}
	\subcaption*{The case $\omega_1\ge 2$}
	\end{minipage}\hfill
	\begin{minipage}{0.48\textwidth}
		\centering
		\begin{tikzpicture}[thick, scale=0.8, every node/.style={scale=0.98}]]
			\draw[solid](0.5,0)--(8.5,0);
			\draw[solid](1.5,0)--(1.2,1);
			\draw[dotted] (1.5,0)--(1.8,1);
			\draw[dotted] (3,0)--(2.7,1);
			\draw[dotted] (3,0)--(3.3,1);
			\draw[dotted] (4.5,0)--(4.2,1);
			\draw[dotted] (4.5,0)--(4.8,1);
			\draw[dotted] (6,0)--(5.7,1);
			\draw[dotted] (6,0)--(6.3,1);
			\draw[dotted] (7.5,0)--(7.2,1);
			\draw[solid](7.5,0)--(7.8,1);
			
			\draw[dotted] (4,1.3)--(5,1.3);

			\shade [shading=ball, ball color=black] (0.5,0) circle (.07) node [above] {\scriptsize$x_1=z_1$};
			\shade [shading=ball, ball color=black] (1.5,0) circle (.07) node [below] {\scriptsize$x_2=x$};
			\shade [shading=ball, ball color=black] (3,0) circle (.07) node [below] {\scriptsize$x_3=y$};
			\shade [shading=ball, ball color=black] (4.5,0) circle (.07) node [below] {\scriptsize$x_4$};
			\shade [shading=ball, ball color=black] (6,0) circle (.07) node [below] {\scriptsize$x_5$};
			\shade [shading=ball, ball color=black] (7.5,0) circle (.07) node [below] {\scriptsize$x_6$};
			\shade [shading=ball, ball color=black] (8.5,0) circle (.07) node [above] {\scriptsize$x_7$};
			\shade [shading=ball, ball color=black] (1.2,1) circle (.07) node [above] {\scriptsize$y_1=z_p$};
			\shade [shading=ball, ball color=black] (7.8,1) circle (.07) node [above] {\scriptsize$y_q$};
		\end{tikzpicture}
	\subcaption*{The case $\omega_3\ge 2$}
	\end{minipage}\hfill
	\caption{ $Caterpillar\ graph\ with\ k=7,d=1$}
\end{figure}
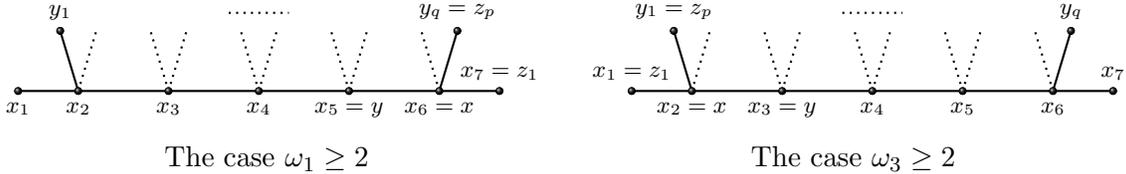

Now, we provide some linear upper bounds on the regularity of powers of the edge ideal of a non-trivial integrally closed tree  under the condition that $k=4$, $d=2$  and  $\omega_1=\max\{\omega(e) \mid e \in E(P_\omega)\}$.
\begin{Theorem}\label{k=4,d=2}
Let $G_\omega$ be a non-trivial integrally closed tree as in Remark \ref{setup},  and let $P_\omega$ be its longest path of length $(k-1)$ containing all non-trivial edges.  If $k=4$, $d=2$ and  $\omega_1=\max\{\omega(e) \mid e \in E(P_\omega)\}$, then  $\reg(I(G_\omega)^t) \le 2\omega_1(t-1)+\reg(I(G_\omega))$ for all $t\ge 1$.
\end{Theorem}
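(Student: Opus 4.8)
The plan is to run a double induction: an outer induction on the exponent $t$ and an inner, structural induction on $|V(G_\omega)|$. As a preliminary step I would record what integral closure forces on the weights. Since $G_\omega$ is non-trivial, integrally closed, and the maximum weight is attained on $e_1 = x_1x_2$, Lemma \ref{integral} gives $\omega_1 \ge 2$ and $\omega_2 = 1$, while the only possible second non-trivial edge is $e_3 = x_3x_4$ with $1 \le \omega_3 \le \omega_1$. The case $t = 1$ is the identity $\reg(I) = \reg(I)$, so I may assume $t \ge 2$.

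Because $d = 2 \ge 2$, Lemma \ref{deletepoint} (equivalently Remark \ref{notation}) supplies an induced trivial pendant star $H_\omega$: a vertex $x$ at distance $1$ from $P_\omega$ whose only non-leaf neighbor is a spine vertex $y \in \{x_2, x_3\}$, together with its leaf-children $z_1, \ldots, z_p$ lying at distance $2$. I would feed this star into the cascade of short exact sequences (\ref{eqn: equality3}) of Remark \ref{notation}, so that $\reg(S/I^t)$ is controlled, via Lemma \ref{reglemma}, by the regularities of the colon terms $L_1 = I^{t-1}$, $L_j = I(G_\omega \setminus \{z_1,\ldots,z_{j-1}\})^{t-1}$, $((T_p:x):y) = I(G_\omega\setminus C)^{t-1}$, and of the two deletion terms $(T_p, x) = (I(G_\omega\setminus x)^t, x)$ and $((T_p:x), y) = (I(G_\omega\setminus\{x,y\})^t, z_1,\ldots,z_p,y)$.

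Step three bounds each term against the target $2\omega_1(t-1) + \reg(I)$. Every colon term is a $(t-1)$-st power of the edge ideal of an induced subgraph, so by the outer induction on $t$ together with Lemma \ref{indsubgraph} it is at most $2\omega_1(t-2) + \reg(I)$; after the degree shifts in (\ref{eqn: equality3}) these remain strictly below the target, using $\omega_1 \ge 1$. For $(T_p, x)$ I would observe that deleting the center $x$ leaves a smaller tree $\tilde G$ still containing the whole spine and still attaining maximal weight $\omega_1$ on $e_1$; its power regularity is governed by Theorem \ref{tree,d=1} when $\tilde G$ has $d \le 1$ and by the structural induction when $d = 2$, giving $\reg((T_p,x)) = \reg(I(\tilde G)^t) \le 2\omega_1(t-1) + \reg(I)$ through Lemma \ref{indsubgraph}. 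Finally, for $((T_p:x), y)$ I would invoke Lemma \ref{deletepoint} to get $\reg(I(G_\omega\setminus\{x,y\})) \le \reg(I) - 1$ and then use the structural induction, recombined across connected components by Lemma \ref{sum}, to lift this to $\reg(I(G_\omega\setminus\{x,y\})^t) \le 2\omega_1(t-1) + \reg(I) - 1$. Assembling all of these in Lemma \ref{reglemma} then yields $\reg(I^t) \le 2\omega_1(t-1) + \reg(I)$.

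The hardest step is the treatment of $((T_p:x), y)$. Deleting the spine vertex $y$ may disconnect $G_\omega \setminus\{x,y\}$ and, when $y = x_2$, it destroys the maximal edge $e_1$, so the smaller graph no longer satisfies the theorem's hypotheses verbatim. The observations that rescue the argument are that every component of $G_\omega \setminus \{x,y\}$ has maximal weight at most $\omega_3 \le \omega_1$ (so the slope can only decrease), that Lemma \ref{sum}(2) recombines the component power-regularities without exceeding slope $2\omega_1$, and that Lemma \ref{deletepoint} already pins the constant term strictly below $\reg(I)$. Making this bookkeeping precise, in particular checking the component-wise hypotheses so that the structural induction is genuinely well founded, is where the real work lies.
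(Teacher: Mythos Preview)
Your approach is essentially the paper's: find a pendant star $H_\omega$ centred at a distance-$1$ vertex $x$ with spine neighbour $y$, run the cascade (\ref{eqn: equality3}), bound the colon terms by the induction on $t$ via Lemma~\ref{indsubgraph}, and handle the two deletion terms by appealing to the $d\le 1$ results and to Lemma~\ref{deletepoint}. The only cosmetic difference is that the paper indexes the inner induction by $\gamma=|\{v:d(v)=2\}|$ rather than $|V(G_\omega)|$; this makes the split between the base case (where $G_\omega\setminus x$ drops to $d\le 1$ and Theorem~\ref{caterpillar2}/\cite[Theorem 4.16]{ZDCL} applies) and the inductive step cleaner, but is logically equivalent to your $|V|$-induction. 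The paper also pins down $y=x_2$ (noting that $y=x_3$ is only possible when $\omega_3\ge 2$) and, for the $((T_p:x),y)$ term, cases explicitly on $\gamma=1$ versus $\gamma\ge 2$ and on $\omega_3=1$ versus $\omega_3\ge 2$; this is exactly the ``bookkeeping'' you flag in your last paragraph, and the tools you name (Lemma~\ref{sum}, Lemma~\ref{treetri}, Lemma~\ref{deletepoint}, the earlier caterpillar theorems) are the ones the paper invokes.
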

\begin{proof}
Let $I=I(G_\omega)$,  $V(P_\omega)=\{x_1,\ldots,x_k\}$,  $d(x)=\min \{d_G(x, x_j) |1 \le j \le k\text{\ and\ }x\in V(G_\omega)\}$ and  $\gamma=|\{x \in V(G_\omega)\mid d(x)=2\}|$.
We prove the statements by induction on $t$. The case where $t = 1$ is trivial. So we can assume that $t\ge 2$. Since $d=2$, there exists an induced trivial star subgraph $H_\omega$ with $V(H_\omega)=\{x,y\}\cup C$,  where $d(x)=1$, $N_G(x)=\{y\}\cup C$, $C=\{z_1\ldots,z_p\}$  with $p\ge 1$,  and $E(H_\omega)=\{xy,xz_1,\ldots,xz_p\}$ with $\deg_{G}(y)\ge 2$ and $\deg_{G}(z_i)=1$ for all $i\in [p]$, see  Figure $4$ for the  case $k=4$ and $d=2$.
From the definition of $P_\omega$, we know that if $\omega_3=1$, then  $y=x_2$, and if $\omega_3\ge 2$, then $y=x_2$ or $y=x_3$.  For the sake of consistency, we will always set $y=x_2$, since the case where $y=x_3$ can be shown by similar arguments. Adapting the notations in Remark \ref{notation}, let $L_{1}=(I^t: xz_1), T_{1}=(I^t, xz_1), L_j=(T_{j-1}: xz_j), T_j=(T_{j-1}, xz_j)$ for any $2 \le j \le p$, then, by  Lemma \ref{indsubgraph},   the inductive hypothesis on  $t$, we have $\reg(L_1)\le 2\omega_1(t-2)+\reg(I)$, $\reg(((T_p:x):y))\le 2\omega_1(t-2)+\reg(I)$ and $\reg(L_j)\le 2\omega_1(t-2)+\reg(I)$ for all $2\le j\le p$.

To compute $\reg(I^t)$ from the exact sequences (\ref{eqn: equality3}), we have to compute $\reg((T_p,x))$ and $\reg(((T_p:x),y))$.
From Remark \ref{notation}, Theorem \ref{caterpillar2} and \cite[Theorem  4.16]{ZDCL}, we can deduce that if $\gamma=1$, then $\reg((T_p,x))=\reg(I(G_\omega \backslash x)^t)=2\omega_1 t$.
If $\gamma\ge 2$, then $\reg((T_p,x))=\reg(I(G_\omega \backslash x)^t)\le 2\omega_1 (t-1)+\reg(I(G_\omega \backslash x))\le 2\omega_1 (t-1)+\reg(I)$ again by Lemma \ref{indsubgraph} and the inductive hypothesis on $\gamma$.
From  Remark \ref{notation}, Lemmas \ref{sum}, \ref{treetri} and \ref{deletepoint} and \cite[Theorems 3.1, 3.3 and 4.16]{ZDCL}, we can deduce that $\reg(((T_p:x),y))=\reg(I(G_\omega \backslash \{x,y\})^t)=2\omega_3t$ when $\gamma=1$, and $\reg(((T_p:x),y))=2(t-1)+\reg(I(G_\omega \backslash \{x,y\}))\le 2\omega_1(t-1)+\reg(I)-1$ if $\gamma\ge 2$ and  $\omega_3=1$, and  $\reg(((T_p:x),y))\le 2\omega_3(t-1)+\reg(I(G_\omega \backslash \{x,y\}))\le 2\omega_1(t-1)+\reg(I)-1$ if $\omega_3\ge 2$ and $\gamma\ge 2$,
again by the case $\omega_3=1$ above   and Theorem \ref{caterpillar2}.

In any case, we  can always obtain $\reg(I^t) \le 2\omega_1(t-1)+\reg(I)$ by Lemma \ref{reglemma} and the   exact sequences (\ref{eqn: equality3}).
 \end{proof}

\begin{figure}[htbp]
	\begin{minipage}{0.48\textwidth}
		\centering
		\begin{tikzpicture}[thick, scale=0.8, every node/.style={scale=0.98}]]
			\draw[solid](0.5,0)--(5.3,0);
			\draw[solid](2,0)--(0.7,1);
			\draw[solid](2,0)--(1.7,1);
			\draw[solid](2,0)--(2.7,1);
			\draw[solid](4,0)--(3.1,1);
			\draw[solid](4,0)--(3.7,1);
			\draw[solid](4,0)--(4.7,1);
			\draw[solid](1.7,1)--(1,2);
			\draw[dotted](1.7,1)--(1.3,2);
			\draw[dotted](1.7,1)--(1.6,2);
			\draw[solid](1.7,1)--(2,2);
			\draw[dotted](3.7,1)--(3.3,2);
			\draw[dotted](3.7,1)--(4,2);
			\draw[dotted](4.7,1)--(4.4,2);
			\draw[dotted](4.7,1)--(5.1,2);
			
			\draw[dotted](1.3,2.3)--(1.6,2.3);

			\shade [shading=ball, ball color=black] (0.5,0) circle (.07) node [below] {\scriptsize$x_1$};
			\shade [shading=ball, ball color=black] (2,0) circle (.07) node [below] {\scriptsize$x_2=y$};
			\shade [shading=ball, ball color=black] (4,0) circle (.07) node [below] {\scriptsize$x_3$};
			\shade [shading=ball, ball color=black] (5.3,0) circle (.07) node [below] {\scriptsize$x_4$};
			\shade [shading=ball, ball color=black] (0.7,1) circle (.07);
			\shade [shading=ball, ball color=black] (1.7,1) circle (.07) node [left] {\scriptsize$x$};
			\shade [shading=ball, ball color=black] (2.7,1) circle (.07);
			\shade [shading=ball, ball color=black] (3.1,1) circle (.07);
			\shade [shading=ball, ball color=black] (3.7,1) circle (.07);
			\shade [shading=ball, ball color=black] (4.7,1) circle (.07);
			\shade [shading=ball, ball color=black] (1,2) circle (.07) node [above] {\scriptsize$z_1$};
			\shade [shading=ball, ball color=black] (2,2) circle (.07) node [above] {\scriptsize$z_p$};
		\end{tikzpicture}
	\subcaption*{The case $k=4,d=2$}
	\end{minipage}\hfill
	\begin{minipage}{0.48\textwidth}
		\centering
		\begin{tikzpicture}[thick, scale=0.8, every node/.style={scale=0.98}]]
			\draw[solid](0.5,0)--(7.3,0);
			\draw[solid](2,0)--(0.7,1);
			\draw[solid](2,0)--(1.7,1);
			\draw[solid](2,0)--(2.7,1);
			\draw[solid](4,0)--(3.1,1);
			\draw[solid](4,0)--(3.7,1);
			\draw[solid](4,0)--(4.7,1);
			\draw[solid](6,0)--(5.5,1);
			\draw[solid](6,0)--(6.7,1);
			\draw[solid](1.7,1)--(1,2);
			\draw[solid](1.7,1)--(2,2);
			\draw[solid](2.7,1)--(2.5,2);
			\draw[solid](2.7,1)--(3.3,2);
			\draw[solid](4.7,1)--(4.2,2);
			\draw[solid](2,2)--(1.5,3);
			\draw[solid](2,2)--(2.5,3);
			\draw[dotted](2,2)--(1.8,3);
			\draw[dotted](2,2)--(2.2,3);
			\draw[dotted](3.3,2)--(2.9,3);
			\draw[dotted](3.3,2)--(3.6,3);
			\draw[dotted](4.2,2)--(3.8,3);
			\draw[dotted](4.2,2)--(4.6,3);
		
		    \draw[dotted](1.8,3.3)--(2.2,3.3);

			\shade [shading=ball, ball color=black] (0.5,0) circle (.07) node [below] {\scriptsize$x_1$};
			\shade [shading=ball, ball color=black] (2,0) circle (.07) node [below] {\scriptsize$x_2=y$};
			\shade [shading=ball, ball color=black] (4,0) circle (.07) node [below] {\scriptsize$x_3$};
			\shade [shading=ball, ball color=black] (6,0) circle (.07) node [below] {\scriptsize$x_4$};
			\shade [shading=ball, ball color=black] (7.3,0) circle (.07) node [below] {\scriptsize$x_5$};
			\shade [shading=ball, ball color=black] (0.7,1) circle (.07);
			\shade [shading=ball, ball color=black] (1.7,1) circle (.07) node [left] {\scriptsize$y$};
			\shade [shading=ball, ball color=black] (2.7,1) circle (.07);
			\shade [shading=ball, ball color=black] (3.1,1) circle (.07);
			\shade [shading=ball, ball color=black] (3.7,1) circle (.07);
			\shade [shading=ball, ball color=black] (4.7,1) circle (.07);
			\shade [shading=ball, ball color=black] (5.5,1) circle (.07);
			\shade [shading=ball, ball color=black] (6.7,1) circle (.07);
	    	\shade [shading=ball, ball color=black] (1,2) circle (.07);
			\shade [shading=ball, ball color=black] (2,2) circle (.07) node [left] {\scriptsize$x$};
			\shade [shading=ball, ball color=black] (2.5,2) circle (.07);
			\shade [shading=ball, ball color=black] (3.3,2) circle (.07);
			\shade [shading=ball, ball color=black] (4.2,2) circle (.07);
			\shade [shading=ball, ball color=black] (1.5,3) circle (.07) node [above] {\scriptsize$z_1$};
			\shade [shading=ball, ball color=black] (2.5,3) circle (.07) node [above] {\scriptsize$z_p$};
		\end{tikzpicture}
	\subcaption*{The case $k=5,d=3$}
	\end{minipage}\hfill
	\caption{$Caterpillar\  graph$}
	\label{Fig:H}
\end{figure}
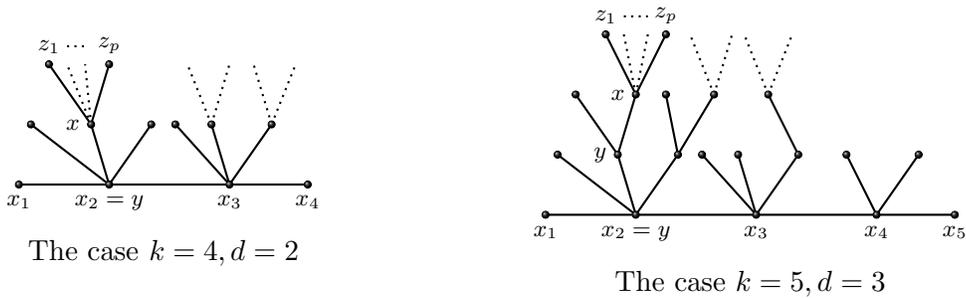
Next, we prove another main result of this section.
\begin{Theorem}\label{tree,d=2}
Assuming  $d=2$, let $G_\omega$ be a non-trivial integrally closed  tree, as described in Remark \ref{setup},  and let $P_\omega$ be  the longest  path of length $(k-1)$ containing all non-trivial edges. Also, assume that    $\omega_i=\max\{\omega(e) \mid e \in E(P_\omega)\}$  with $\omega_i\ge 2$ and $\omega_i\ge \omega_{i+2}$ if $e_{i+2}\in E({P_{\omega}})$.  Then $\reg(I(G_\omega)^t)\le 2\omega_i(t-1)+\reg(I(G_\omega))$ for all $t\ge 1$.
\end{Theorem}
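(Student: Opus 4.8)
The plan is to mirror the proofs of Theorem \ref{tree,d=1} and Theorem \ref{k=4,d=2}, running a nested induction. First I would set $I=I(G_\omega)$ and $V(P_\omega)=\{x_1,\ldots,x_k\}$ and induct on $t$, the base case $t=1$ being trivial. For the inductive step I would also induct on the quantity $\gamma=|\{x\in V(G_\omega)\mid d(x)=2\}|$, the number of vertices at the maximal distance $2$ from $P_\omega$, and, in the branch $i=1$, on $k$ exactly as in Theorem \ref{tree,d=1}. Since $d=2$, the selection described in Remark \ref{notation} (as used in Theorem \ref{k=4,d=2}) supplies an induced trivial star $H_\omega$ whose center $x$ satisfies $d(x)=1$, whose spine-side neighbour $y$ has $\deg_G(y)\ge 2$ (so $y\in V(P_\omega)$), and whose leaves $C=\{z_1,\ldots,z_p\}$ satisfy $d(z_i)=2$.

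Next I would run the chain of short exact sequences (\ref{eqn: equality3}) attached to this star. For the colon terms $L_1,\ldots,L_p$ and $((T_p:x):y)$, Remark \ref{notation} identifies each, up to the variables $z_j$, with a power $I(G_\omega\setminus\{z_1,\ldots,z_{j-1}\})^{t-1}$; as these graphs are induced subgraphs of $G_\omega$, Lemma \ref{indsubgraph} together with the inductive hypothesis on $t$ bounds their regularity by $2\omega_i(t-2)+\reg(I)$. After the degree shifts $(-1),(-2)$ and since $\omega_i\ge 2$, these contributions stay below the target value $2\omega_i(t-1)+\reg(I)$, so they create no difficulty in Lemma \ref{reglemma}.

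The two decisive terms are $\reg((T_p,x))=\reg(I(G_\omega\setminus x)^t)$ and $\reg(((T_p:x),y))=\reg(I(G_\omega\setminus\{x,y\})^t)$. Deleting $x$ isolates every $z_i$, so $G_\omega\setminus x$ is again a tree of the required type with strictly fewer distance-$2$ vertices; if it still has $d=2$ I bound its regularity by the inductive hypothesis on $\gamma$, and if its distance has dropped to $1$ I invoke Theorem \ref{tree,d=1}, in either case obtaining $\reg(I(G_\omega\setminus x)^t)\le 2\omega_i(t-1)+\reg(I)$ after using Lemma \ref{indsubgraph} to compare $\reg(I(G_\omega\setminus x))\le\reg(I)$. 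For the final term I would combine a regularity-of-powers bound for $G_\omega\setminus\{x,y\}$, again from Theorem \ref{tree,d=1} or the inner induction once its hypotheses are checked, with the crucial gain $\reg(I(G_\omega\setminus\{x,y\}))\le\reg(I)-1$ coming from Lemma \ref{deletepoint}; this yields $\reg(I(G_\omega\setminus\{x,y\})^t)\le 2\omega_i(t-1)+\reg(I)-1$. Feeding all the estimates into Lemma \ref{reglemma} along (\ref{eqn: equality3}) then produces $\reg(I^t)\le 2\omega_i(t-1)+\reg(I)$.

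The hard part will be the bookkeeping for $\reg(I(G_\omega\setminus\{x,y\})^t)$. I must verify that $G_\omega\setminus\{x,y\}$ (and $G_\omega\setminus x$) remains non-trivial and integrally closed, and, above all, that its maximum weight is still $\omega_i$, so the factor $2\omega_i$ in the target bound is undisturbed; since $y$ lies on $P_\omega$ while $xy$ and the $xz_i$ are all trivial edges, this reduces to arguing that deleting $y$ does not remove the weight-$\omega_i$ edge, which I expect to handle by placing the star at a far, trivially weighted end of the tree, exactly as in Theorem \ref{k=4,d=2}. A second point requiring care is arranging the secondary induction so that both $G_\omega\setminus x$ and $G_\omega\setminus\{x,y\}$ fall under a strictly smaller instance (smaller $\gamma$, or the already-settled $d=1$ case), and I anticipate separating the subcases $\omega_{i+2}=1$ and $\omega_{i+2}\ge 2$ when invoking the regularity formula of Theorem \ref{reg} for the base power.
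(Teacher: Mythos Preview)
Your plan is essentially the paper's own argument: induct on $t$, pick the trivial star centred at a vertex $x$ with $d(x)=1$ whose unique non-leaf neighbour $y$ lies on $P_\omega$, run the exact sequences~(\ref{eqn: equality3}), bound the colon terms $L_j$ and $((T_p:x):y)$ via Lemma~\ref{indsubgraph} and the induction on $t$, and control the two remaining terms by a secondary induction on $\gamma$ that bottoms out in Theorem~\ref{tree,d=1}, using Lemma~\ref{deletepoint} for the extra $-1$ in the $((T_p:x),y)$ term.

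Two small points where your outline diverges from the paper. First, the paper also carries an induction on $k$, taking Theorem~\ref{k=4,d=2} (the case $k=4$, $d=2$) as a base; you omit this and rely purely on the $\gamma$-induction with the $d=1$ fallback, which also works but means you must deal with every $k\ge 4$ uniformly in the inductive step. Your remark about inducting on $k$ ``in the branch $i=1$'' is not something the paper does here and is not needed. Second, and more importantly, your proposed fix for the $((T_p:x),y)$ term---``placing the star at a far, trivially weighted end of the tree'' so that deleting $y$ does not kill the heavy edge---is not always available: the star is forced on you by where the distance-$2$ vertices sit, and $y$ may well be $x_i$ or $x_{i+1}$. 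The paper does not try to avoid this. Instead it accepts that $G_\omega\setminus\{x,y\}$ may be a forest, possibly with strictly smaller (or trivial) maximum weight, and bounds $\reg(I(G_\omega\setminus\{x,y\})^t)$ by combining Lemma~\ref{sum} for the disjoint components, Lemma~\ref{treetri} for any trivially weighted pieces, Theorem~\ref{tree,d=1} or the $\gamma$-induction for the non-trivial piece, and finally Lemma~\ref{deletepoint}; the possible drop from $\omega_i$ to a smaller maximum weight only helps in these inequalities. So the bookkeeping you flag is real, but the resolution is to allow the heavy edge to disappear rather than to manoeuvre around it.
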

\begin{proof}
Let $I=I(G_\omega)$,  $V(P_\omega)=\{x_1,\ldots,x_k\}$,  $d(x)=\min \{d_G(x, x_j) |1 \le j \le k\text{\ and\ }x\in V(G_\omega)\}$,  and  $\gamma=|\{x \in V(G_\omega)\mid d(x)=2\}|$.
We prove the statements by induction on $t$ and $k$. The case where $t = 1$ is trivial and where  $t\ge 2$ and $k=4$ is verified by Theorem \ref{k=4,d=2}. So we can assume that $t\ge 2$ and $k\ge 5$.
As in the proof of Theorem \ref{k=4,d=2}, there exists an induced trivial star subgraph $H_\omega$ with $V(H_\omega)=\{x,y,z_1\ldots,z_p\}$,  where $d(x)=1$, $N_G(x)=\{y,z_1\ldots,z_p\}$ and $p\ge 1$,  and $E(H_\omega)=\{xy,xz_1,\ldots,xz_p\}$ with $\deg_{G}(y)\ge 2$ and $\deg_{G}(z_i)=1$ for all $i\in [p]$. By the definition of $P_\omega$,   $y=x_j$ with $2\le j\le k-1$.
 For consistency,  we choose $y=x_3$, since other $y$ can be shown by similar arguments. Adapting the notations in Remark \ref{notation}, let $L_{1}=(I^t: xz_1), T_{1}=(I^t, xz_1), L_j=(T_{j-1}: xz_j), T_j=(T_{j-1}, xz_j)$ for all $2 \le j \le p$, then, by  Lemma \ref{indsubgraph},   the inductive hypothesis on $t$, we can obtain that  $\reg(L_1)\le 2\omega_i(t-2)+\reg(I)$, $\reg(((T_p:x):y))\le 2\omega_i(t-2)+\reg(I)$ and  $\reg(L_j)\le 2\omega_i(t-2)+\reg(I)$ for all $2\le j\le p$.

If $\gamma=1$, then by Lemmas \ref{sum}, \ref{treetri}, \ref{indsubgraph} and \ref{deletepoint}, Theorem \ref{tree,d=1}, and \cite[Theorems 3.1, 3.3 and 4.16]{ZDCL} and  Lemma \ref{indsubgraph}, we can deduce that $\reg((T_p,x))=\reg(I(G_\omega \backslash x)^t)\le 2\omega_i(t-1)+\reg(I(G_\omega \backslash x))\le 2\omega_i(t-1)+\reg(I)$ and $\reg(((T_p:x),y))=\reg(I(G_\omega \backslash \{x,y\})^t)\\
\le 2\omega_i(t-1)+\reg(I(G_\omega \backslash \{x,y\}))\le 2\omega_i(t-1)+\reg(I)-1$.

If $\gamma\ge 2$, then, by Lemmas \ref{sum},  \ref{treetri},  \ref{indsubgraph} and \ref{deletepoint}, \cite[Theorems 3.1, 3.3 and 4.16]{ZDCL}, and the inductive hypothesis on $\gamma$, we have $\reg((T_p,x))=\reg(I(G_\omega \backslash x)^t)\le 2\omega_i (t-1)+\reg(I(G_\omega \backslash x))\le 2\omega_i (t-1)+\reg(I)$ and $\reg(((T_p:x),y))=\reg(I(G_\omega \backslash \{x,y\})^t)\le 2\omega_i(t-1)+\reg(I)-1$.

Therefore, in every case, we  can always obtain $\reg(I^t) \le 2\omega_i(t-1)+\reg(I)$ by Lemma \ref{reglemma} and the   exact sequences (\ref{eqn: equality3}).
 \end{proof}

It is time for the important result of this section.
\begin{Theorem}\label{tree,d>2}
Let $G_\omega$ be a non-trivial integrally closed tree as in Remark \ref{setup},  and let $P_\omega$ be its longest path of length $(k-1)$ containing all non-trivial edges. If  $\omega=\max\{\omega(e) \mid e \in E(P_\omega)\}$, then $\reg(I(G_\omega)^t) \le 2\omega(t-1)+\reg(I(G_\omega))$ for all $t\ge 1$.
\end{Theorem}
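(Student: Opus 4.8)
The plan is to argue by induction on the distance parameter $d$ of Notation \ref{distance}, with the base cases $d=1$ and $d=2$ supplied verbatim by Theorems \ref{tree,d=1} and \ref{tree,d=2} (noting that here $\omega=\omega_i$). So I would assume $d\ge 3$ and, inside this, run a nested induction exactly as in the proof of Theorem \ref{tree,d=2}: on $t$ (the case $t=1$ being trivial) and on the count $\gamma:=|\{x\in V(G_\omega)\mid d(x)=d\}|$ of vertices lying at maximal distance from $P_\omega$.

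First I would locate the star to split off. Since $G_\omega$ is a tree, any vertex realizing the maximal distance $d$ must be a leaf whose unique neighbour sits at distance $d-1$; pick such a neighbour $x$ (so $d(x)=d-1$) and let $z_1,\dots,z_p$ be its leaf-neighbours at distance $d$ and $y$ its unique neighbour toward $P_\omega$. Because the tree has no cycles, $y$ is the only distance-$(d-2)$ neighbour of $x$, so $N_G(x)=\{y\}\cup\{z_1,\dots,z_p\}$ and $\{x,y,z_1,\dots,z_p\}$ spans an induced trivial star $H_\omega$ as in Remark \ref{notation}; moreover $d(y)=d-2\ge 1$, whence $y\notin V(P_\omega)$. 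I would then invoke the chain of short exact sequences (\ref{eqn: equality3}) and, via Lemma \ref{reglemma}, bound $\reg(I^t)$ by the regularities of the pieces $L_1,\dots,L_p$, $(T_p:x):y$, $(T_p,x)$, and $((T_p:x),y)$.

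The colon pieces are routine. By Remark \ref{notation}, the inductive hypothesis on $t$, and Lemma \ref{indsubgraph} (to replace the regularity of each deleted subgraph by $\reg(I)$), each of $\reg(L_1),\reg(L_j),\reg((T_p:x):y)$ is at most $2\omega(t-2)+\reg(I)$; after accounting for the degree shifts by $2$ in (\ref{eqn: equality3}) these contribute at most $\reg(I^{t-1})+2\le 2\omega(t-1)+\reg(I)$, which respects the target since $\omega\ge 2$. For $(T_p,x)=(I(G_\omega\backslash x)^t,x)$ the key observation is that deleting $x$ isolates the $p$ distance-$d$ leaves $z_i$, so $G_\omega\backslash x$ has either strictly smaller $d$ or the same $d$ with strictly smaller $\gamma$; the inductive hypothesis therefore applies, and combined with $\reg(I(G_\omega\backslash x))\le\reg(I)$ from Lemma \ref{indsubgraph} it gives $\reg((T_p,x))\le 2\omega(t-1)+\reg(I)$. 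For $((T_p:x),y)=(I(G_\omega\backslash\{x,y\})^t,z_1,\dots,z_p,y)$ I would apply the inductive hypothesis to $G_\omega\backslash\{x,y\}$ together with Lemma \ref{deletepoint}, whose $-1$ saving yields $\reg((T_p:x),y)\le 2\omega(t-1)+\reg(I)-1$ (the extra variables $z_i,y$ do not alter the regularity as they do not occur in $I(G_\omega\backslash\{x,y\})$). Feeding all four bounds into (\ref{eqn: equality3}) through Lemma \ref{reglemma} produces $\reg(I^t)\le 2\omega(t-1)+\reg(I)$.

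The hard part will be the bookkeeping for $G_\omega\backslash\{x,y\}$. Because $\deg_G(y)\ge 2$, deleting $y$ can disconnect the tree into a component still containing $P_\omega$ together with one or more trivially weighted components severed from the path; for the latter the distance-to-$P_\omega$ framework degenerates (distances become infinite) and the induction on $d$ and $\gamma$ does not apply directly. I would resolve this by splitting $I(G_\omega\backslash\{x,y\})$ along its connected components and using Lemma \ref{sum}(2) to combine the path-component (controlled by the inductive hypothesis and Lemma \ref{deletepoint}) with the trivial components (controlled by Lemma \ref{treetri}), then checking that the worst term of the resulting maximum still respects $2\omega(t-1)+\reg(I)-1$. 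Verifying that these deletions are compatible with a well-founded induction — with $P_\omega$ and the maximal weight $\omega$ preserved throughout, since $x,y,z_i$ all lie off $P_\omega$ once $d\ge 3$ — is the only genuinely delicate point.
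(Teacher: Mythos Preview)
Your proposal is correct and follows essentially the same route as the paper: induction on $d$ with base cases $d\le 2$ (the paper also folds in $d=0$ via \cite[Theorem 4.16]{ZDCL}), and for $d\ge 3$ a nested induction on $t$ and $\gamma$, locating a trivial star with $d(x)=d-1$ and running the exact sequences (\ref{eqn: equality3}) from Remark \ref{notation}. The paper is terser at the point you flag as ``the hard part'': it bounds $\reg(I(G_\omega\backslash\{x,y\})^t)$ by citing only Lemmas \ref{indsubgraph}, \ref{deletepoint} and the inductive hypothesis, without explicitly treating the possible disconnection of $G_\omega\backslash\{x,y\}$; your resolution via Lemma \ref{sum}(2) together with Lemma \ref{treetri} for the severed trivially weighted components is exactly what makes that step rigorous, and mirrors how the paper handles the analogous step in Theorems \ref{k=4,d=2} and \ref{tree,d=2}.
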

\begin{proof}
Let $I=I(G_\omega)$,  $V(P_\omega)=\{x_1,\ldots,x_k\}$,  $d(x)=\min \{d_G(x, x_j) |1 \le j \le k\text{\ and\ }x\in V(G_\omega)\}$, and  $\gamma=|\{x \in V(G_\omega)\mid d(x)=d\}|$.
	We prove the statements by induction on $t$ and $d$. The case where $t=1$ is trivial and the case where $t\ge 2$ and $d\le 2$ is proved separately
in \cite[Theorem 4.16]{ZDCL}, Theorems \ref{tree,d=1} and \ref{tree,d=2}. Thus, in the following, we can assume that $t\ge 2$ and $d\ge 3$. As in the proof of Theorem \ref{k=4,d=2}, there exists an induced trivial star subgraph $H_\omega$ with $V(H_\omega)=\{x,y,z_1\ldots,z_p\}$,  where  $d(x)=d-1$, $N_G(x)=\{y,z_1\ldots,z_p\}$ and $p\ge 1$,  and $E(H_\omega)=\{xy,xz_1,\ldots,xz_p\}$ with $\deg_{G}(y)\ge 2$ and $\deg_{G}(z_i)=1$ for all $i\in [p]$ (, see  Figure $4$ for the  case $k=5$ and $d=3$). Adapting the notations in Remark \ref{notation}, let $L_{1}=(I^t: xz_1), T_{1}=(I^t, xz_1), L_j=(T_{j-1}: xz_j), T_j=(T_{j-1}, xz_j)$ for all $2 \le j \le p$,
 then, by  Lemma \ref{indsubgraph},   the inductive hypothesis on $t$, we have $\reg(L_1)\le 2\omega(t-2)+\reg(I)$, $\reg(((T_p:x):y))\le 2\omega(t-2)+\reg(I)$, $\reg(L_j)\le 2\omega(t-2)+\reg(I)$ for all $2\le j\le p$.

If $\gamma=1$, then,  by Lemmas \ref{indsubgraph} and  \ref{deletepoint}, and  the inductive hypothesis on $d$, we obtain $\reg((T_p,x))=\reg(I(G_\omega \backslash x)^t)\le 2\omega(t-1)+\reg(I(G_\omega \backslash x))\le 2\omega(t-1)+\reg(I)$ and $\reg(((T_p:x),y))=\reg(I(G_\omega \backslash \{x,y\})^t)\le 2\omega(t-1)+\reg(I(G_\omega \backslash \{x,y\}))\le 2\omega(t-1)+\reg(I)-1$.

If $\gamma\ge 2$, then, by  Lemmas \ref{indsubgraph} and \ref{deletepoint}, and the inductive hypothesis on $\gamma$ and $d$, we can deduce  $\reg((T_p,x))=\reg(I(G_\omega \backslash x)^t)\le 2\omega (t-1)+\reg(I(G_\omega \backslash x))\le 2\omega (t-1)+\reg(I)$ and $\reg(((T_p:x),y))=\reg(I(G_\omega \backslash \{x,y\})^t)\le 2\omega(t-1)+\reg(I(G_\omega \backslash \{x,y\}))\le 2\omega(t-1)+\reg(I)-1$.

Therefore, in each case, we  can always obtain $\reg(I^t) \le 2\omega(t-1)+\reg(I)$ by Lemma \ref{reglemma} and the   exact sequences (\ref{eqn: equality3}).
 \end{proof}

In special cases, the upper bound in Theorem \ref{tree,d>2} can reach.
\begin{Theorem}\label{eq}
	Let $G_\omega$ be a non-trivial integrally closed tree as in Remark \ref{setup},  and let $P_\omega$ be its longest path of length $(k-1)$ containing all non-trivial edges. If  $\omega_i=\max\{\omega(e) \mid e \in E(P_\omega)\}$ and $\reg(I(G_\omega))=2\omega_i+(s_i(G_\omega)-1)$, then $\reg(I(G_\omega)^t) = 2\omega_it+(s_i(G_\omega)-1)$ for all $t\ge 1$.
\end{Theorem}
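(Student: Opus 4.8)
The plan is to prove the asserted equality by establishing the two inequalities separately, the upper bound being essentially free and the lower bound requiring an induced-subgraph argument. For the upper bound, note that $\omega_i=\max\{\omega(e)\mid e\in E(P_\omega)\}$, so Theorem \ref{tree,d>2} applies with $\omega=\omega_i$ and gives $\reg(I(G_\omega)^t)\le 2\omega_i(t-1)+\reg(I(G_\omega))$ for all $t\ge 1$. Substituting the hypothesis $\reg(I(G_\omega))=2\omega_i+(s_i(G_\omega)-1)$ immediately yields $\reg(I(G_\omega)^t)\le 2\omega_i t+(s_i(G_\omega)-1)$. Thus everything reduces to proving the reverse inequality $\reg(I(G_\omega)^t)\ge 2\omega_i t+(s_i(G_\omega)-1)$.

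For the lower bound I would pass to a carefully chosen induced subgraph. Write $s=s_i(G_\omega)$ and fix an induced matching $M=\{e_i,f_1,\ldots,f_{s-1}\}$ of $G_\omega$ of maximum size $s$ containing $e_i$, which exists by the definition of $s_i(G_\omega)$. Let $H_\omega=G_\omega[V(M)]$ be the induced subgraph on the vertices covered by $M$; since $M$ is an induced matching, $H_\omega$ is exactly the disjoint union of the $s$ edges of $M$, carrying their weights from $G_\omega$. The crucial structural point is that every matching edge other than $e_i$ is trivial: as in the proof of Theorem \ref{reg}, Lemma \ref{integral} forces $G_\omega$ to have at most two non-trivial edges, and a second one could only be $e_{i+2}=x_{i+2}x_{i+3}$; but $e_i$ and $e_{i+2}$ cannot occur together in an induced matching because $x_{i+1}x_{i+2}=e_{i+1}$ is an edge of $P_\omega$. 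Consequently $I(H_\omega)=I_0+I_1+\cdots+I_{s-1}$, where $I_0=(x_i^{\omega_i}x_{i+1}^{\omega_i})$ is the ideal of the unique weighted edge and each $I_\ell$ is a trivial-edge ideal, all in pairwise disjoint variable sets.

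Next I would compute $\reg(I(H_\omega)^t)=2\omega_i t+(s-1)$ by induction on $s$ using Lemma \ref{sum}(2). For $s=1$ the ideal $I_0^t=(x_i^{\omega_i t}x_{i+1}^{\omega_i t})$ is principal, so $\reg(I_0^t)=2\omega_i t$. For the inductive step set $I=I_0+\cdots+I_{s-2}$ and $J=I_{s-1}=(ab)$, so that $\reg(I^m)=2\omega_i m+(s-2)$ and $\reg(J^m)=2m$; applying Lemma \ref{sum}(2), the two families of terms are $\reg(I^{t-i})+\reg(J^i)=2\omega_i t+(s-2)-2i(\omega_i-1)$ and $\reg(I^{t-j+1})+\reg(J^j)-1=2\omega_i t+2\omega_i+(s-3)-2j(\omega_i-1)$. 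Since $\omega_i\ge 2$, both are decreasing in their summation index, so the maximum is attained at $i=1$, $j=1$ and equals $\max\{2\omega_i(t-1)+s,\ 2\omega_i t+(s-1)\}=2\omega_i t+(s-1)$, completing the induction. Finally, Lemma \ref{indsubgraph} gives $\reg(I(H_\omega)^t)\le\reg(I(G_\omega)^t)$, so $\reg(I(G_\omega)^t)\ge 2\omega_i t+(s-1)$, which combined with the upper bound proves the theorem.

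The main obstacle is the lower bound, and specifically recognizing that the maximum induced matching through $e_i$ yields an induced subgraph that is a disjoint union of edges with \emph{exactly one} non-trivial edge; once this is established, the regularity of its powers is a clean induction. The weight hypotheses ($\omega_i\ge 2$ and $\omega_i$ maximal) are precisely what makes the competing ``$+1$'' contributions subdominant in the maximization of Lemma \ref{sum}(2), so that the closed form $2\omega_i t+(s_i(G_\omega)-1)$ survives; the remainder is routine bookkeeping.
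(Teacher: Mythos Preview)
Your proof is correct and takes a genuinely different route from the paper's for the lower bound. Both you and the paper obtain the upper bound directly from Theorem~\ref{tree,d>2}. For the lower bound, the paper works intrinsically with $I^t$: it writes $I^t=J^t+K$ with $J=(x_i^{\omega_i}x_{i+1}^{\omega_i})$, shows via polarization and Lemma~\ref{spliting} that $(I^t)^{\mathcal P}=(J^t)^{\mathcal P}+K^{\mathcal P}$ is a Betti splitting, and then computes $J^t\cap K=J^tL$ explicitly (where $L$ is the ideal with generators $(N_G(A)\setminus A)\sqcup \mathcal G(I(G_\omega^3))$), obtaining $\reg(J^t\cap K)-1=2\omega_i t+(s_i(G_\omega)-1)$ from Lemmas~\ref{sum} and~\ref{G3}. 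Your argument instead passes to the induced subgraph $H_\omega$ supported on a maximum induced matching through $e_i$, observes (via Lemma~\ref{integral}) that $H_\omega$ is a disjoint union of one weighted edge and $s-1$ trivial ones, computes $\reg(I(H_\omega)^t)$ by a clean induction using Lemma~\ref{sum}(2), and invokes Lemma~\ref{indsubgraph}. Your approach is more elementary in that it avoids the explicit intersection computation and the Betti-splitting machinery for $I^t$; the paper's approach, on the other hand, stays closer to the structure of $I^t$ itself and reuses the same splitting idea that drove Theorem~\ref{reg}. One cosmetic point: in your application of Lemma~\ref{sum}(2) the summation index $i$ collides with the subscript in $\omega_i$; renaming it would avoid ambiguity.
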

\begin{proof}
	Let $I=I(G_\omega)$ and $I^t=J^t+K$, where $J=(x_i^{\omega_i}x_{i+1}^{\omega_i})$ and $K=\mathcal{G}(I^t)\backslash \mathcal{G}(J^t)$, and let $(J^t)^{\mathcal{P}}$, $K^{\mathcal{P}}$ and $(I^t)^{\mathcal{P}}$ be the polarizations of $J^t$, $K$ and $I^t$, respectively. Then $(I^t)^{\mathcal{P}}=J^{\mathcal{P}}+K^{\mathcal{P}}$ and $(J^t)^{\mathcal{P}}\cap K^{\mathcal{P}}=(J^t\cap K)^{\mathcal{P}}$ by \cite[Proposition 2.3]{F}.
By Lemmas \ref{spliting} and  \ref{polar}, we know that $(I^t)^{\mathcal{P}}=(J^t)^{\mathcal{P}}+K^{\mathcal{P}}$ is a Betti splitting and that
 \begin{align}
		\reg(I^t)&=\reg((I^t)^{\mathcal{P}})=\max\{\reg((J^t)^{\mathcal{P}}), \reg(K^{\mathcal{P}}), \reg((J^t)^{\mathcal{P}} \cap K^{\mathcal{P}})-1\}   \notag \\
		& =\max\{\reg(J^t), \reg(K), \reg(J^t \cap K)-1\}. \label{eqn: equality9}
	\end{align}
Since $\omega_i=\max\{\omega(e) \mid e \in E(G_\omega)\}$, we have
  \begin{align*}
J^t \cap K&=(x_i^{t\omega_i}x_{i+1}^{t\omega_i})\cap (\mathcal{G}(I^t)\backslash \mathcal{G}(J^t))\\
&=(x_i^{t\omega_i}x_{i+1}^{t\omega_i})\cap [(x_i^{\omega_i}x_{i+1}^{\omega_i})^{t-1}(\mathcal{G}(I)\backslash \mathcal{G}(J))+(x_i^{\omega_i}x_{i+1}^{\omega_i})^{t-2}(\mathcal{G}(I)\backslash \mathcal{G}(J))^2+\cdots\\
&+(x_i^{\omega_i}x_{i+1}^{\omega_i})(\mathcal{G}(I)\backslash \mathcal{G}(J))^{t-1}+(\mathcal{G}(I)\backslash \mathcal{G}(J))^t]\\
&=(x_i^{t\omega_i}x_{i+1}^{t\omega_i})\cap ((x_i^{\omega_i}x_{i+1}^{\omega_i})^{t-1}(\mathcal{G}(I)\backslash \mathcal{G}(J)))\\
&=J^tL
\end{align*}
where $L$ is an ideal, its  minimal generator set is $(N_G(A)\backslash A)\sqcup \mathcal{G}(I(G_\omega^{3}))$ with $A=\{x_i,x_{i+1}\}$ and $G_{\omega}^{3}=G_\omega \backslash N_G(A)$. So
$\reg(J^t\cap K)=\reg(J^tL)=\reg(J^t)+\reg(L)=2\omega_it+\nu(G_\omega^3)+1=2\omega_it+s_i(G_\omega)$ by Lemmas \ref{sum} and \ref{G3}.

On the other hand, let $H$ and $H^{\prime}$ be hypergraphs associated  with $\mathcal{G}((I^t)^{\mathcal{P}})$ and $\mathcal{G}(K^{\mathcal{P}})$, respectively, then $H^{\prime}$ is an induced subhypergraph of $H$. Hence,
	$\reg(K)=\reg(K^{\mathcal{P}}) \le \reg((I^t)^{\mathcal{P}}) \le 2\omega_it+(s_i(G_\omega)-1)$ by Lemma \ref{polar}, Theorem \ref{tree,d>2} and \cite[Lemma 3.1]{H}.
	Therefore, $\reg(I(G_\omega)^t) = 2\omega_it+(s_i(G_\omega)-1)$  from formula (\ref{eqn: equality9}).
\end{proof}

The following examples show that the upper bound in Theorem \ref{tree,d>2} can be strict.
\begin{Example}
Let $G_\omega$ be a non-trivial integrally closed tree  as in Remark \ref{setup}, its edge ideal is   $I(G_\omega)=(x_1x_2,x_2^2x_3^2, x_3x_4,x_4x_5, x_3x_6,x_6x_7,x_3x_8,x_8x_9)$.  Let $P_\omega$ be its longest path  containing all non-trivial edges with $V(P_\omega)=\{x_1,x_2,x_3,x_4,x_5\}$, then $\omega_2=\max\{\omega_t\mid \omega_t=\omega(e_t) \text{ and } e_t=x_tx_{t+1} \text{ for any } 1\le t\le 4\}=2$, $s_2(G_\omega)=1$ and $\nu(G_\omega)=4$. Thus $\reg(I(G_\omega))=\nu(G_\omega)+1$ by Theorem \ref{reg}. By using CoCoA, we obtain $\reg(I(G_\omega)^2)=8<2\omega_2+\reg(I(G_\omega))$.
\end{Example}

\begin{Example}
Let $G_\omega$ be a non-trivial integrally closed tree  as in Remark \ref{setup}, its edge ideal is  $I(G_\omega)=(x_1^3x_2^3,x_2x_3,x_3^2x_4^2,x_2x_5, x_5x_6,x_2x_7,x_7x_8,x_2x_9,x_9x_{10})$.  Let $P_\omega$ be its longest path  containing all non-trivial edges with $V(P_\omega)=\{x_1,x_2,x_3,x_4\}$, then $\omega_1=\max\{\omega_t\mid \omega_t=\omega(e_t) \text{ and } e_t=x_tx_{t+1} \text{ for any } 1\le t\le 4\}=3$, $\omega_3=2$, $s_1(G_\omega)=1$, $s_3(G_\omega)=3$  and $\nu(G_\omega)=4$. Thus $\reg(I(G_\omega))=2\omega_{3}+(s_{3}(G_\omega)-1)=7$ by Theorem \ref{reg}. By using CoCoA, we obtain $\reg(I(G_\omega)^2)=12<2\omega_1+\reg(I(G_\omega))$.
\end{Example}

\medskip
\hspace{-6mm} {\bf Acknowledgments}

 \vspace{3mm}
\hspace{-6mm}  This research is supported by the Natural Science Foundation of Jiangsu Province (No. BK20221353). The authors are grateful to the software systems \cite{Co} and \cite{GS}
 for providing us with a large number of examples.





\end{document}